\newtheorem{theorem}{Theorem}[section]
\theoremstyle{plain}
\newtheorem{corollary}{Corollary}[section]
\newtheorem{definition}{Definition}[section]
\newtheorem{lemma}{Lemma}[section]
\newtheorem{notation}{Notation}[section]
\newtheorem{proposition}{Proposition}[section]
\newtheorem{remark}{Remark}[section]
\numberwithin{equation}{section}
\numberwithin{figure}{section}
\begin{document}
\title[\textbf{Branch values in Ahlfors' theory}]{\textbf{Branch values in
Ahlfors' theory of covering surfaces}}
\author{Zonghan Sun }
\address{Department of Mathematical Sciences, Tsinghua University, Beijing
100084, P. R. China. \textit{Email:} \textit{sun-zh13@mails.tsinghua.edu.cn
\&\ bipfiic2008xj@163.com}}
\author{Guangyuan Zhang}
\address{Department of Mathematical Sciences, Tsinghua University, Beijing
100084, P. R. China. \textit{Email:} \textit{gyzhang@math.tsinghua.edu.cn}}

\begin{abstract}
In the study of the constant in Ahlfors' second fundamental theorem
involving a set $E_{q}$ of $q$ points, branch values of covering surfaces
outside $E_{q}$ bring a lot of troubles. To avoid this situation, for a
given surface $\Sigma $, it is useful to construct a new surface $\Sigma
_{0} $ such that $L(\partial \Sigma _{0})$ $\leq L(\partial \Sigma ),$ and $%
H(\Sigma _{0})\geq H(\Sigma ),$ and all branch values of $\Sigma _{0}$ are
contained in $E_{q}.$ One special case is discussed in \cite{Z2}. The goal
of this paper is to prove the existence of such $\Sigma _{0}$, which
generalizes Lemma 9.1 and Theorem 10.1 in \cite{Z2}.
\end{abstract}

\keywords{ Covering surfaces, Nevanlinna Theory, Isoperimetric inequality }
\subjclass[2000]{ 30D35, 30D45, 52B60}
\thanks{Project 11531007 supported by NSFC}
\maketitle
\tableofcontents

\section{Introduction}

In this paper, the Riemann sphere $S$\ is the unit sphere in $\mathbb{R}%
^{3}, $ centered at the origin, and identified with $\overline{\mathbb{C}}$
via the stereographic projection $P$ in \cite{Ah0}. The Euclidean metric in $%
\mathbb{R}^{3}$ induces the spherical metric on $S$ and on $\overline{%
\mathbb{C}}.$ Then the \emph{spherical length} $L$ and the \emph{spherical
area} $A$ on $S$ have natural interpretations on $\mathbb{C}$:

\begin{equation*}
dL=\frac{2|dz|}{1+|z|^{2}},\text{ }dA=\frac{4dx\wedge dy}{(1+|z|^{2})^{2}},%
\text{ }z\in \mathbb{C}.
\end{equation*}

For a set $V$ in $\overline{%
\mathbb{C}
},$ $\partial V,$ $\overline{V}$ and $V^{\circ }$ denote its boundary,
closure and interior respectively. The notation $a\overset{\alpha }{%
\rightarrow }b$ denotes an oriented curve $\alpha $ from $a$ to $b$ on $S.$
For an arc $\alpha =\alpha (t),$ $t\in \lbrack -1,1],$ $-\alpha $ denotes
the opposite arc $(-\alpha )(t)=\alpha (-t).$

\begin{remark}
As a convention, all curves and arcs in this paper are assumed oriented, and
a subarc of an arc $\alpha $ always inherits the orientation of $\alpha .$
When specific set operations and set relations are used, curves and arcs
will be regarded as sets.
\end{remark}

$S$, and (closed) Jordan domains on $S$ are oriented inward, induced by the
stereographic projection $P,$ i.e. $P$ induces each inward normal on $%
S\backslash \{\infty \}$ into an upward normal on $\mathbb{C}$. This leads
to the following rule of the orientation of the boundary of (closed) Jordan
domains. For a closed Jordan domain $\overline{U},$ let $h$ be a M\"{o}bius
transformation with $h(P^{-1}(U))\subset 
\mathbb{C}
.$ Then as the boundary of $\overline{U}$ or $U,$ $\partial U$ is oriented
by $h\circ P^{-1}$ and the anticlockwise orientation of $\partial h(P^{-1}(%
\overline{U}))=h(P^{-1}(\partial U)).$ We denote by $\Delta $ the unit disk $%
\left\vert z\right\vert <1.$ As the boundary of $\Delta ,$ $\partial \Delta $
is oriented anticlockwise ($1\rightarrow i\rightarrow -1\rightarrow
-i\rightarrow 1$), but as the boundary of $S\backslash \overline{\Delta },$ $%
\partial (S\backslash \overline{\Delta })$ is oriented clockwise ($%
1\rightarrow -i\rightarrow -1\rightarrow i\rightarrow 1$).

A (closed) Jordan domain $U$ ($\overline{U}$) on $S$ bounded by a Jordan
curve $\gamma ,$ is called \emph{enclosed by} $\gamma ,$ if $\partial U$ and 
$\gamma $ have the same orientation. As a convention, the subsets of $S$
would always be seen from the origin of $%
\mathbb{R}
^{3}$. Then, a Jordan domain $U$ is on the left hand side of $\partial U.$
Similarly, let $U$ be a domain on $S,$ such that $\partial U$ is a finite
disjoint union $\tbigcup\limits_{1\leq j\leq m}\alpha _{j}$ of Jordan
curves. Then as a part of $\partial U,$ $\alpha _{j}$ is suitably oriented,
such that $U$ is on the left hand side of $\alpha _{j}.$

\subsection{Covering surfaces}

\begin{definition}
\label{partition} (1) For a simple arc $p_{0}\overset{\alpha }{\rightarrow }%
p_{n}$,\ a \emph{partition} of $\alpha $ is a collection $\{p_{j-1}\overset{%
\alpha _{j}}{\rightarrow }p_{j}|$ $1\leq j\leq n\}$ of subarcs of $\alpha $,
which is denoted by $\alpha =\alpha _{1}+\cdots +\alpha _{n}.$

(2) For an arc $\alpha $ and a continuous $\overline{%
\mathbb{C}
}$-valued function $f$ on $\alpha ,$ $(f,\alpha )$ denotes the path $%
f:\alpha \rightarrow \overline{%
\mathbb{C}
}.$

(3) Let $\gamma =(f,\alpha ),$ where $\alpha $ is a simple arc. Then, for a
partition 
\begin{equation*}
\alpha =p_{0}\overset{\alpha _{1}}{\rightarrow }p_{1}\overset{\alpha _{2}}{%
\rightarrow }p_{2}\text{ }\cdots \text{ }p_{n-1}\overset{\alpha _{n}}{%
\rightarrow }p_{n}=\alpha _{1}+\cdots +\alpha _{n},
\end{equation*}%
$\gamma =(f,\alpha _{1})+\cdots +(f,\alpha _{n})$ is called a \emph{partition%
} of $\gamma $, and the end points $\{f(p_{0}),\cdots ,f(p_{n})\}$ are
called the \emph{vertices}.
\end{definition}

\begin{definition}
An arc $\alpha $ on $\overline{%
\mathbb{C}
}$ or $S$ is called a \emph{simple analytic arc}, if $\alpha $ is simple and
compact, and there is a conformal mapping $\varphi $ from some neighbourhood 
$V_{\alpha }$ of $\alpha $ into $\overline{%
\mathbb{C}
},$ such that $\varphi (\alpha )\subset 
\mathbb{R}
$. An arc $\alpha $ is called a \emph{piecewise analytic arc}, if $\alpha $
could be partitioned into a finite number of simple analytic arcs.
\end{definition}

\begin{definition}
\label{light} A mapping $\widetilde{f}$ from a domain $W\subset S$ to $S$ is
called an \emph{orientation-preserving} \emph{light mapping,} if $\widetilde{%
f}$ is continuous, open,\ orientation-preserving,\emph{\ }and \emph{discrete}
(that is to say, for each $p\in S,$ $\widetilde{f}^{-1}(p)$ is discrete in $%
W $). More generally, a mapping $f$ from a subset $K$ of $S$ to $S$ is
called an \emph{orientation-preserving} \emph{light mapping}, if $f$ can be
extended to an orientation-preserving light\emph{\ }mapping $\widetilde{f}$
defined on a domain $W\supset K.$ The set of all orientation-preserving
light mappings on $K\subset S$ is denoted by $OPL(K)$.
\end{definition}

\begin{definition}
\label{surface} A \emph{covering} \emph{surface} $\Sigma $ is a pair $(f,%
\overline{U})$, such that the following hold.

1. $U$ is a domain on $S,$ such that $\partial U$ is a finite (possibly
empty) disjoint union $\tbigcup\limits_{1\leq j\leq m}\alpha _{j}$\ of
Jordan curves.

2. $f\in OPL(\overline{U})$.

3. Each closed curve $f:\alpha _{j}\rightarrow S$ is piecewise analytic,
denoted by $(f,\alpha _{j}).$

The \emph{boundary }$\partial \Sigma $ is defined as the formal sum of the
closed curves $\{(f,\alpha _{j})|$ $1\leq j\leq m\}.$
\end{definition}

There are only three types of covering surfaces discussed in this paper as
follows.

\begin{definition}
\label{family F,D} Let $\Sigma =(f,\overline{U})$ be a covering surface.

(1) $\Sigma $ is called a \emph{closed surface,} if $U=\overline{U}=S$.

(2) $\Sigma $ is called a \emph{simply-connected} \emph{surface,} if $U$ is
a Jordan domain. Then $\partial \Sigma $ is the closed curve $(f,\partial
U). $

(3) Throughout, the family of all simply-connected surfaces is denoted by $%
\mathbf{F}$.

(4) $\Sigma $ is called a \emph{doubly-connected surface, }if $U$ is an open
annulus, bounded by two disjoint Jordan curves $\partial _{in}U$ and $%
\partial _{ex}U$.
\end{definition}

Simply-connected surfaces are the most important among three types of
covering surfaces. Some readers may regard the lightness in Definition \ref%
{surface} as an artificial requirement, but actually this condition is
natural and appropriate, as in Proposition \ref{iso to mero}.

Each piecewise analytic arc $\beta $ could be partitioned into simple
analytic arcs: 
\begin{equation}
\beta =\beta _{1}+\cdots +\beta _{n};  \label{pa}
\end{equation}%
and the \emph{(spherical) length} $L(\beta )$ is defined as 
\begin{equation*}
L(\beta )\overset{def}{=}\tsum\limits_{1\leq j\leq n}L(\beta _{j})\in 
\mathbb{R}
^{+}.
\end{equation*}%
For a covering surface $\Sigma =(f,\overline{U}),$ such that $\partial U$ is
a disjoint union $\tbigcup\limits_{1\leq j\leq m}\alpha _{j}$\ of Jordan
curves, the \emph{(spherical)} \emph{perimeter} $L(\partial \Sigma )$ is
defined as$\ $%
\begin{equation*}
L(\partial \Sigma )\overset{def}{=}\tsum\limits_{1\leq j\leq m}L(f,\alpha
_{j}),
\end{equation*}%
which is also denoted by $L(f,\partial U)$ or $L(f,\tbigcup\limits_{1\leq
j\leq m}\alpha _{j}).$ $L(\beta )$ is independent of the partition (\ref{pa}%
) of $\beta $, and then $L(\partial \Sigma )$ is well-defined.\ For a closed
surface $\Sigma ,$ we have $\partial \Sigma =\emptyset ,$ and then $%
L(\partial \Sigma )=0.$

For each covering surface $\Sigma =(f,\overline{U})$ and each $w\in S,$ the 
\emph{covering number} $\overline{n}(f,w)=\overline{n}(\Sigma ,w)$ is
defined as the number $\#(f^{-1}(w)\cap U)$ of $w$-points of $f$ in $U,$
ignoring multiplicity. Then, the \emph{(spherical) area }$A(\Sigma )=A(f,%
\overline{U})=A(f,U)$ is defined as%
\begin{equation*}
\iint_{S}\overline{n}(f,w)d\sigma (w)=\iint_{%
\mathbb{C}
}\frac{4\overline{n}(f,u+iv)dudv}{(1+u^{2}+v^{2})^{2}},
\end{equation*}%
where $d\sigma $ is the spherical area element on $S.$

We could understand $A(\Sigma )=A(f,\overline{U})$ in another equivalent
way, and some notations are introduced first. For a set $K\subset \overline{%
\mathbb{C}
},$ $Mero^{\ast }(K)$ denotes the set of all non-constant meromorphic
functions on some neighbourhood of $K.$ For two (closed) domains $K$ and $G$
on $S$, $Homeo^{+}(K,G)$ denotes the set of all orientation-preserving
homeomorphisms from $K$ onto $G.$ For two oriented simple arcs $\alpha $ and 
$\beta $ on $S$, $Homeo^{+}(\alpha ,\beta )$ denotes the set of all
orientation-preserving homeomorphisms from $\alpha $ onto $\beta .$

For $\Sigma =(f,\overline{U}),$ by Proposition \ref{iso to mero}, there
exists a closed domain $\overline{V}$ on $\overline{%
\mathbb{C}
},$ and two mappings $g\in Mero^{\ast }(\overline{V})$ and $\varphi \in
Homeo^{+}(\overline{U},\overline{V}),$ such that $f=g|_{\overline{V}}\circ
\varphi .$ Then,\emph{\ }$(g,\overline{V})=\Sigma ^{\prime }$ is also a
covering surface, and for each $w\in S,$ $\overline{n}(\Sigma ,w)=\overline{n%
}(\Sigma ^{\prime },w).$ Thus by definition, 
\begin{equation*}
A(\Sigma )=A(\Sigma ^{\prime })=\tiint\nolimits_{S}\overline{n}(\Sigma
^{\prime },w)d\sigma (w)=\tiint\nolimits_{V}\frac{4|g^{\prime }(z)|^{2}dxdy}{%
(1+|g(z)|^{2})^{2}}.
\end{equation*}%
This integral is independent of the choices of the meromorphic function $g$
and the homeomorphism $\varphi ,$ as long as $f=g|_{\overline{V}}\circ
\varphi $. Especially, each $f\in Mero^{\ast }(\overline{\Delta })$ is
contained in $OPL(\overline{\Delta })$, and $\Sigma =(f,\overline{\Delta }%
)\in \mathbf{F}$ (see Definition \ref{family F,D} for $\mathbf{F}$)
satisfies:%
\begin{equation*}
L(\partial \Sigma )=\int_{\partial \Delta }\frac{2|f^{\prime }(z)||dz|}{%
1+|f(z)|^{2}},\text{ }A(\Sigma )=\tiint\nolimits_{\Delta }\frac{4|f^{\prime
}(z)|^{2}dxdy}{(1+|f(z)|^{2})^{2}}.
\end{equation*}

\subsection{The main theorem}

Ahlfors' Second Fundamental Theorem (SFT) is the following inequality (also
see \cite{Dr}, \cite{Du}, \cite{Ere}, \cite{Ha} and \cite{Z2}), parallel to
Nevanlinna's SFT:

\begin{theorem}[Ahlfors' SFT]
For any set $E_{q}=\{a_{1},a_{2},\dots ,a_{q}\}$ of $q$ distinct points on $%
S,$ $q\geq 3,$ there exists a constant $h\in 
\mathbb{R}
^{+}$, dependent only on $E_{q},$ such that for each $\Sigma \in \mathbf{F},$
\begin{equation*}
(q-2)A(\Sigma )\leq 4\pi \sum_{v=1}^{q}\overline{n}(\Sigma
,a_{v})+hL(\partial \Sigma ).
\end{equation*}
\end{theorem}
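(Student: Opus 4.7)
The plan is to exploit the paper's main construction---the existence, for any $\Sigma\in\mathbf{F}$, of a surface $\Sigma_0$ with $L(\partial \Sigma_0) \leq L(\partial \Sigma)$, with $H(\Sigma_0) \geq H(\Sigma)$, and with all branch values in $E_q$---to reduce Ahlfors' SFT to the special case in which every branch value of the covering lies in $E_q$. Since $H$ is designed so that the target inequality takes the form $H(\Sigma) \leq h L(\partial \Sigma)$, the monotonicity $H(\Sigma_0) \geq H(\Sigma)$ together with $L(\partial \Sigma_0) \leq L(\partial \Sigma)$ means that proving the SFT for $\Sigma_0$ automatically yields it for $\Sigma$. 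Hence it suffices to establish the theorem under the additional assumption that $f:\overline{U}\to S$ is unbranched outside $E_q$; equivalently, $f$ restricts to a local orientation-preserving homeomorphism on $f^{-1}(S\setminus E_q)\cap U$.

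Under this assumption I would run the classical Ahlfors counting argument. Fix small pairwise disjoint closed spherical disks $\overline{D}_v\ni a_v$ whose geodesic boundaries avoid $E_q$, and partition $\overline{U}$ into the preimages $f^{-1}(\overline{D}_v)$ and the ``neutral'' region $U_0 = f^{-1}(S\setminus\bigcup_{v=1}^{q} D_v^{\circ})\cap\overline{U}$. Each component of $f^{-1}(D_v)\cap U$ is either a \emph{simple island} (mapped homeomorphically onto $D_v$, contributing $1$ to $\overline{n}(\Sigma,a_v)$), a \emph{multiple island} (a branched cover of $D_v$ ramified only at $a_v$), or a \emph{peninsula} meeting $\partial U$. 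A Riemann--Hurwitz / Euler-characteristic tally on the nerve of this decomposition yields
\begin{equation*}
(q-2) A(\Sigma) \leq 4\pi \sum_{v=1}^{q} \overline{n}(\Sigma, a_v) + C_1 \cdot N(\Sigma),
\end{equation*}
where $N(\Sigma)$ counts peninsulas together with neutral components adjacent to $\partial U$, and $C_1$ depends only on the chosen disk radii.

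The last step is an isoperimetric inequality for simply-connected coverings of $S\setminus E_q$: each peninsula, and each boundary-touching neutral component, must accumulate a definite positive spherical length on $\partial\Sigma$, so $N(\Sigma)\leq C_2 L(\partial\Sigma)$ with $C_2$ depending only on $E_q$ and the disk radii. Combining gives the SFT with $h=C_1 C_2$. The main obstacle I anticipate is not the reduction (which is the substance of the present paper) but the combinatorial bookkeeping of peninsulas that traverse several disks: one must verify that the Euler-characteristic accounting closes up correctly and that the isoperimetric lower bound applies uniformly to every boundary-touching component. These points are handled in the framework of \cite{Ah0} and \cite{Z2} and require no essentially new idea once the branch-value reduction is in hand.
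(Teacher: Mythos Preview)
The paper does not prove Ahlfors' SFT at all: it is quoted as a classical result (with references to \cite{Dr}, \cite{Du}, \cite{Ere}, \cite{Ha}, \cite{Z2}) and serves only as motivation. The paper's contribution runs in the \emph{opposite} direction---assuming the SFT, it shows that in the study of the optimal constant $h(E_q)$ one may restrict attention to surfaces with $CV(\Sigma)\subset E_q$. There is therefore no ``paper's own proof'' of this statement to compare your attempt against.

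Your plan also has the logic partly inverted. You propose to use the paper's main theorem to reduce the SFT to the case $CV(\Sigma_0)\subset E_q$, and then run the classical island/peninsula/Euler-characteristic argument. But that classical argument (Ahlfors' original proof) already works for arbitrary covering surfaces without any hypothesis on branch values; the branch-value reduction buys you nothing here. The point of the paper is not that the reduction makes the SFT \emph{provable}, but that it helps when one wants to compute or bound the \emph{constant} $h(E_q)$ sharply, where uncontrolled branching outside $E_q$ genuinely complicates the analysis.

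Two smaller points. First, the SFT reads $R(\Sigma)\le hL(\partial\Sigma)$, i.e.\ $H(\Sigma)\le h$; your line ``the target inequality takes the form $H(\Sigma)\le hL(\partial\Sigma)$'' is a slip. Second, the version of the main theorem you invoke (Theorem~\ref{branch-special}) carries the hypothesis $H(\Sigma_1)\ge 0$; this is harmless for your purpose since the SFT is trivial when $R(\Sigma)<0$, but you should say so explicitly.
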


Throughout, $a_{1},a_{2},\dots ,a_{q}$ are called the \emph{special points},
and $E_{q}$ is called the \emph{special set}. For $\Sigma =(f,\overline{U}),$
\emph{the total covering number }$\overline{n}(f,E_{q})=\overline{n}(\Sigma
,E_{q})$,\emph{\ the reduced area }$R(\Sigma )$ and \emph{the RL-ratio} $%
H(\Sigma )$ are defined as follows:%
\begin{eqnarray*}
&&\overline{n}(f,E_{q})\overset{def}{=}\sum_{v=1}^{q}\overline{n}(f,a_{v}),
\\
&&R(\Sigma )\overset{def}{=}(q-2)A(\Sigma )-4\pi \overline{n}(f,E_{q}),\text{
} \\
&&H(\Sigma )\overset{def}{=}R(\Sigma )/L(\partial \Sigma ).
\end{eqnarray*}%
Then Ahlfors' SFT could be expressed as%
\begin{equation*}
h(E_{q})\overset{def}{=}\sup \{H(\Sigma )|\text{ }\Sigma \in \mathbf{F}%
\}<+\infty .
\end{equation*}%
Let $\mathbf{F}_{0}(E_{q})\subset \mathbf{F}$ be the family of all
simply-connected surfaces $\Sigma =(f,\overline{U}),$ such that $f(U)\cap
E_{q}=\emptyset .$ A direct consequence of Ahlfors' SFT is%
\begin{equation*}
h_{0}(E_{q})\overset{def}{=}\sup \{H(\Sigma )|\text{ }\Sigma \in \mathbf{F}%
_{0}(E_{q})\}\leq h(E_{q})<+\infty .
\end{equation*}

\begin{definition}
\label{branch point,D} Let $\Sigma =(f,\overline{U})$ be a covering surface,
and $p\in \overline{U}.$ If for each neighbourhood $V_{p}$ of $p$ in $%
\overline{U},$ $f$ is not injective on $V_{p}\cap U$, then $p$ is called a 
\emph{branch point }(of $\Sigma $), and $f(p)$ is called a \emph{branch value%
} (of $\Sigma $). The set of all branch points of $\Sigma $ is denoted by $%
C(f)$ or $C(\Sigma ),$ and the set of all branch values of $\Sigma $ is
denoted by $CV(f)$ or $CV(\Sigma )$. Each point $p\in \overline{U}\backslash
C(\Sigma )$ is called a \emph{regular point}.

\ $\ z\in C(\Sigma )$ is called a \emph{special branch point}$,$ or a \emph{%
non-special branch point}, if $f(z)\in E_{q}$ or $f(z)\notin E_{q}$
respectively. $z\in C(\Sigma )$ is called an \emph{interior branch point},
or a \emph{boundary branch point}, if $z\in U$ or $z\in \partial U$
respectively.
\end{definition}

In the study of the constants $h(E_{q})$ and $h_{0}(E_{q})$ in Ahlfors' SFT,
non-special branch points bring a lot of troubles. To avoid this situation,
for a given surface $\Sigma \in \mathbf{F}$ (or $\mathbf{F}_{0}(E_{q})$), it
is useful to construct a new surface $\Sigma _{0}\in \mathbf{F}$ (or $%
\mathbf{F}_{0}(E_{q})$)$,$ such that $L(\partial \Sigma _{0})$ $\leq
L(\partial \Sigma ),$ $H(\Sigma _{0})\geq H(\Sigma ),$ and $CV(\Sigma
_{0})\subset E_{q}.$ The construction of $\Sigma _{0}\in \mathbf{F}%
_{0}(E_{3})$ in the special case $E_{3}=\{0,1,\infty \},$ is discussed in 
\cite{Z2}. The goal of this paper is to prove the following main theorem,
which generalizes Lemma 9.1 and Theorem 10.1 in \cite{Z2}. In fact, we will
prove Theorem \ref{branch-special}, which is slightly stronger than the
following theorem.

\begin{theorem}
For each $\Sigma \in \mathbf{F},$ there exists another surface $\Sigma
_{0}\in \mathbf{F},$ such that the following hold.

(1) For each $a\in E_{q},$ $\overline{n}(\Sigma _{0},a)\leq \overline{n}%
(\Sigma ,a).$

(2) $L(\partial \Sigma _{0})\leq L(\partial \Sigma ),$ $H(\Sigma _{0})\geq
H(\Sigma ).$

(3) $CV(\Sigma _{0})\subset E_{q}.$
\end{theorem}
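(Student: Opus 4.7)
The plan is to construct $\Sigma_0$ by an inductive procedure that eliminates non-special branching one step at a time. I would measure complexity by the \emph{non-special branching number}
\[
b(\Sigma) \;=\; \sum_{p\in C(\Sigma),\ f(p)\notin E_q}(k_p-1),
\]
where $k_p\ge 2$ denotes the local degree of $f$ at the branch point $p$. If $b(\Sigma)=0$ then $CV(\Sigma)\subset E_q$ and $\Sigma_0=\Sigma$ works. Otherwise I would produce $\Sigma'\in\mathbf{F}$ satisfying (1) and (2) with $b(\Sigma')<b(\Sigma)$, and close the induction by applying the hypothesis to $\Sigma'$.

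To build $\Sigma'$, pick a non-special branch point $p\in C(\Sigma)$ with $f(p)=b\notin E_q$. Choose a special point $a\in E_q$ and a simple piecewise-analytic arc $\ell$ on $S$ from $b$ to $a$, avoiding $E_q\setminus\{a\}$ and all other branch values, and meeting $\{b\}$ only at its starting point. Using that $f\in OPL(\overline{U})$, lift $\ell$ starting at $p$: there are $k_p$ distinct lifts emanating from $p$, each continuable until it reaches a point of $f^{-1}(a)$, hits $\partial U$, or runs into another branch point. Pick one such lift $\widetilde{\ell}\subset\overline{U}$ and cut $\overline{U}$ along $\widetilde{\ell}$ to obtain a new closed Jordan domain $\overline{U'}$; the map $f$ descends to $f'\in OPL(\overline{U'})$. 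The two copies $\widetilde{\ell}^{+},\widetilde{\ell}^{-}$ of the cut map to $\ell$ with opposite orientations, and identifying them according to a different gluing (the one that fuses the sheets over $b$ into a single branch at $a$) removes one sheet of the branching at $p$, shifting the excess to the endpoint $a\in E_q$ or, if $\widetilde{\ell}$ reaches $\partial U$, absorbing it into the boundary.

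For the inequalities: cutting and reattaching does not alter $\overline{n}(f,w)$ off the measure-zero set $\ell$, so $A(\Sigma')=A(\Sigma)$; the covering numbers at the special points $a_v\ne a$ are preserved, and at $a$ the construction is set up so that (1) still holds; the two new arcs $(f',\widetilde{\ell}^{+})$ and $(f',\widetilde{\ell}^{-})$ cover $\ell$ with opposite orientations, so in the resulting $\partial\Sigma'$ they either cancel in the signed length or are absorbed into already-present boundary pieces, giving $L(\partial\Sigma')\le L(\partial\Sigma)$. Combined, $R(\Sigma')\ge R(\Sigma)$ and hence $H(\Sigma')\ge H(\Sigma)$. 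The inductive hypothesis, applied to $\Sigma'$, then yields the desired $\Sigma_0$.

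The main obstacle is the lifting-cutting-reattaching step: one must verify that the lift $\widetilde{\ell}$ can be chosen as a simple arc in $\overline{U}$ disjoint from the other $k_p-1$ lifts, that $\overline{U'}$ obtained after cutting along $\widetilde{\ell}$ is again a Jordan domain, that $f'$ remains an orientation-preserving light mapping, and that $\partial\Sigma'$ is still piecewise analytic. The delicate bookkeeping required to ensure simultaneously the three inequalities — monotonicity of the special covering numbers, non-increase of $L(\partial\Sigma)$, and non-decrease of $H(\Sigma)$ — is why the authors work instead with the stronger Theorem~\ref{branch-special}, whose additional hypotheses are tailored exactly so that the inductive step is watertight.
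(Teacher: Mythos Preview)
Your outline has the right flavor --- cut along lifts of an arc to $E_q$ and reglue --- but the specific mechanism you describe does not work, and the paper's proof is organized quite differently precisely to avoid the problems.

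First, cutting along a \emph{single} lift $\widetilde{\ell}$ and then ``identifying according to a different gluing'' is not a well-defined operation: once you slit a disk along one simple arc from the boundary, there is only one way to reglue the two sides to recover a disk, and that gives back the original surface. To relocate branching at $p$ you must cut along \emph{all} $d=v_f(p)$ lifts simultaneously and reglue them in a cyclically shifted pattern; this is exactly what the paper does in Cases~(3) and~(4) of Proposition~\ref{in to bd} and Case~(4) of Proposition~\ref{bd-bd}. Second, your claim that the two copies $\widetilde{\ell}^{\pm}$ ``cancel in the signed length'' is wrong: $L(\partial\Sigma)$ is \emph{unsigned} spherical length, so a slit that is not reglued adds $2L(f,\widetilde{\ell})$ to the perimeter (Lemma~\ref{cut}). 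The paper never relies on any cancellation; instead, in every non-splitting case it arranges $f_2|_{\partial\Delta}=f_1|_{\partial\Delta}$ so that $\partial\Sigma$ is literally unchanged.

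Third, your induction on the non-special branching number $b(\Sigma)$ is not the right invariant. When you perform the multi-lift cut-and-reglue correctly, several things can happen: two lifts may terminate at the same point or at distinct boundary points, causing the surface to \emph{split} (Cases~(1),~(2),~(5) of Proposition~\ref{in to bd}); or the branch point may merely be transported to $\partial\Delta$ rather than to $E_q$ (Case~(3)). In the splitting cases $b$ need not drop, and in the boundary case the branch point is still non-special if $\partial\Sigma\cap E_q=\emptyset$. This is why the paper inducts on the covering sum $sum(\Sigma)$, which strictly decreases under splitting, and separately handles (i) removing non-special folded points, (ii) pushing interior branch points to the boundary, (iii) sliding boundary branch points along $\partial\Delta$ toward $f^{-1}(E_q)$, and (iv) the troublesome situation $\partial\Sigma\cap E_q=\emptyset$, which requires either Proposition~\ref{bd-in,P} or the rotation trick of Proposition~\ref{rotation}. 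Your sketch touches none of these distinctions.
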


When $\Sigma \in \mathbf{F}_{0}(E_{q}),$ by (1), $\overline{n}(\Sigma
_{0},E_{q})=0,$ and then we have $\Sigma _{0}=(f_{0},\overline{U_{0}})\in 
\mathbf{F}_{0}(E_{q}).$ In addition, since 
\begin{equation*}
f_{0}(C(\Sigma _{0})\cap U_{0})=CV(\Sigma _{0})\cap f(U_{0})\subset
E_{q}\cap f(U_{0})=\emptyset ,
\end{equation*}%
we have $C(\Sigma _{0})\subset \partial U_{0},$ and then $f_{0}|_{U_{0}}$ is
a local homeomorphism.

As a simple corollary of this theorem, 
\begin{eqnarray*}
h(E_{q}) &=&\sup \{H(\Sigma )|\text{ }\Sigma \in \mathbf{F},\text{ }%
CV(\Sigma )\subset E_{q}\}, \\
h_{0}(E_{q}) &=&\sup \{H(\Sigma )|\text{ }\Sigma \in \mathbf{F}_{0}(E_{q}),%
\text{ }CV(\Sigma )\subset E_{q}\}.
\end{eqnarray*}%
This provides the convenience that in the study of $h(E_{q})$ and $%
h_{0}(E_{q}),$ only the surfaces $\Sigma \in \mathbf{F}$ with $CV(\Sigma
)\subset E_{q}$ need to be concerned.

\section{Elementary properties of surfaces}

\subsection{Isomorphisms of surfaces}

In this subsection, it is shown that the requirement about lightness in
Definition \ref{surface} is natural and appropriate.

For a simple analytic arc $\alpha (s)=x(s)+iy(s)$ in $%
\mathbb{C}
,$ where $s\in \lbrack 0,L(\alpha )]$ is the arc-length parameter of $\alpha 
$, both $x(s)$ and $y(s)$ are real analytic functions of $s.$ For each $%
s_{0}\in \lbrack 0,L(\alpha )],$ after a rotation of $\alpha ,$ the tangent
vector of $\alpha $ at $s=s_{0}$ could be assumed parallel to $%
\mathbb{R}
$. By the implicit function theorem, for $s\approx s_{0}$, $y(s)$ is a real
analytic implicit function $y(x)$ of $x(s).$ Then by the uniqueness theorem,
for two simple analytic arcs $\alpha $ and $\beta $ on $S$, $\alpha \cap
\beta $ has only a finite number of connected components. Each component $%
\gamma $ of $\alpha \cap \beta $ is either a singleton, or a simple analytic
subarc if $\gamma $ inherits the orientation of $\alpha .$ If a component $%
\gamma $ is not a singleton, then by the uniqueness of analytic extension,
each end point of $\gamma $ must be an end point of either $\alpha $ or $%
\beta .$

\begin{lemma}
\label{partitioned} Each piecewise analytic path $\alpha $ could be
partitioned into simple analytic arcs $\alpha =\alpha _{1}+\alpha
_{2}+\cdots +\alpha _{n},$ such that for each pair $(i,j)$ with $1\leq
i,j\leq n,$ either $\alpha _{i}=\pm \alpha _{j}$, or $\alpha _{i}^{\circ
}\cap \alpha _{j}=\emptyset =\alpha _{j}^{\circ }\cap \alpha _{i}.$
\end{lemma}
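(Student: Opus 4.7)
The plan is to start with any initial partition $\alpha = \beta_1 + \cdots + \beta_m$ into simple analytic arcs (guaranteed by the definition of piecewise analytic) and then refine it by adding finitely many extra subdivision points, chosen so that any two pieces of the refined partition are either identical up to orientation or share at most their endpoints.

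The key input is the observation worked out in the paragraph just before the lemma: for any two simple analytic arcs on $S$, the intersection has only finitely many connected components, each either a singleton or a simple analytic subarc whose endpoints are endpoints of one of the two arcs. Applying this to all ordered pairs $(\beta_i,\beta_j)$, I would collect a finite set $E$ consisting of (a) every endpoint of every $\beta_i$, and (b) every point $p$ which is a singleton component $\{p\}$ of some $\beta_i \cap \beta_j$ without being an endpoint of either $\beta_i$ or $\beta_j$, that is, every interior tangency or crossing. Finiteness of $E$ is immediate from the finiteness of pairs and of components per intersection. I would then subdivide each $\beta_i$ at the points of $E \cap \beta_i$, obtaining the refined partition $\alpha = \alpha_1 + \cdots + \alpha_n$, each $\alpha_k$ being a simple analytic subarc of some $\beta_i$.

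To verify the required property, I take two refined pieces $\alpha_k \subset \beta_i$ and $\alpha_l \subset \beta_j$ and analyze each connected component $\gamma$ of $\alpha_k \cap \alpha_l$. If $\gamma$ is a non-singleton subarc, it lies in a non-singleton component $\gamma'$ of $\beta_i \cap \beta_j$; the endpoints of $\gamma'$, and so of $\gamma$, belong to $E$ by clause (a), hence are partition points of the refinement. Since by construction the interior of a refined piece contains no partition point, both endpoints of $\gamma$ must be endpoints of $\alpha_k$ and of $\alpha_l$, which forces $\gamma = \alpha_k = \pm \alpha_l$. If $\gamma = \{p\}$ is a singleton, I would argue that $p \in \alpha_k^\circ$ is impossible: the component of $\beta_i \cap \beta_j$ containing $p$ is either itself a singleton, in which case $p \in E$ by clause (a) or (b), contradicting $p \in \alpha_k^\circ$; or else it is a non-singleton shared subarc $C$, in which case a neighborhood of $p$ on $C$ lies inside $\alpha_k$ and meets $\alpha_l$ in a nondegenerate subarc (even when $p$ is merely an endpoint of $\alpha_l$), contradicting the isolation of $\{p\}$ in $\alpha_k \cap \alpha_l$. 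The symmetric argument excludes $p \in \alpha_l^\circ$, so $\alpha_k^\circ \cap \alpha_l = \emptyset = \alpha_l^\circ \cap \alpha_k$.

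The main obstacle will be the case analysis for singleton components, specifically the situation where $p$ is an endpoint of $\beta_j$ but lies in the interior of $\beta_i$ and is attached to a non-singleton shared subarc $C$ of $\beta_i$ and $\beta_j$. Checking that in this asymmetric case the isolation of $\{p\}$ in $\alpha_k \cap \alpha_l$ is still violated is what forces clause (a) of $E$ to include \emph{every} endpoint of every $\beta_i$, not merely those that already arose as endpoints of intersection components. Once this bookkeeping is settled, the rest follows cleanly from the finiteness statement recalled just before the lemma.
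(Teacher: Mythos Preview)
Your argument is correct and follows the same strategy as the paper's: refine the initial partition at a finite set of exceptional points manufactured from the pairwise intersections of the $\beta_i$, then verify the dichotomy. The paper carries this out in two passes---first refining at all preimages of the vertex set $B_v$ so that any two resulting pieces $\gamma_i,\gamma_j$ either coincide or meet in a finite set, then refining again at those finite intersections---whereas you do it in a single pass via the set $E$ of all endpoints plus all isolated crossing points; both routes hinge on the same analytic-continuation fact about how two simple analytic arcs can overlap.

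One imprecision in your write-up: in the non-singleton case you assert that ``the endpoints of $\gamma'$, and so of $\gamma$, belong to $E$,'' but an endpoint of $\gamma$ need not be an endpoint of $\gamma'$ (for instance if some third arc $\beta_r$ forced a subdivision point strictly inside $\gamma'$). The correct statement is that every endpoint of $\gamma=(\alpha_k\cap\gamma')\cap(\alpha_l\cap\gamma')$ is an endpoint of $\alpha_k$, of $\alpha_l$, or of $\gamma'$, and all three kinds lie in $E$---the first two by construction of the refinement, the last by clause~(a). With that said, your conclusion $\alpha_k=\pm\alpha_l$ follows as written. Similarly, in the singleton case the parenthetical ``(even when $p$ is merely an endpoint of $\alpha_l$)'' is superfluous: under the hypothesis $p\in\alpha_k^\circ$ that sub-case cannot occur, since endpoints of $\alpha_l$ already lie in $E$ and hence avoid $\alpha_k^\circ$.
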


\begin{proof}
\ \ Suppose $(f,I)$ is a parameterization of $\alpha .$ The interval $I$
could be partitioned into 
\begin{equation*}
I=p_{0}\overset{I_{1}}{\rightarrow }p_{1}\overset{I_{2}}{\rightarrow }p_{2}%
\text{ }\cdots \text{ }p_{m-1}\overset{I_{m}}{\rightarrow }p_{m},
\end{equation*}%
such that each $\beta _{j}=(f,I_{j})$ is a simple analytic arc. Let $%
B_{v}=\{f(p_{0}),\cdots ,$ $f(p_{m})\}$. Then $f^{-1}(B_{v})$ is a finite
subset of $I,$ which leads to a finer partition $I=I_{1}^{\prime }+\cdots
+I_{k}^{\prime }.$ Then the partition 
\begin{equation*}
\alpha =\gamma _{1}+\cdots +\gamma _{k}=(f,I_{1}^{\prime })+\cdots
+(f,I_{k}^{\prime }),
\end{equation*}
is finer than $\alpha =\beta _{1}+\cdots +\beta _{m},$ and for each $%
j=1,\cdots ,k,$ $\gamma _{j}^{\circ }\cap B_{v}=\emptyset ,$ and the end
points of $\gamma _{j}$ are contained in $B_{v}.$ We claim for each pair of
two simple analytic subarcs $\gamma _{i}$ and $\gamma _{j},$ either $%
\#(\gamma _{i}\cap \gamma _{j})<+\infty ,$ or $\gamma _{i}=\pm \gamma _{j}$.
When $\#(\gamma _{i}\cap \gamma _{j})=+\infty ,$ some component $\gamma
_{ij} $ of $\gamma _{i}\cap \gamma _{j}$ is a subarc of $\gamma _{i},$ if $%
\gamma _{ij}$ inherits the orientation of $\gamma _{i}.$ Then by the
uniqueness of analytic extension, the end points of $\gamma _{ij}$ are
contained in $B_{v}. $ Since $\gamma _{j}^{\circ }\cap B_{v}=\emptyset
=\gamma _{i}^{\circ }\cap B_{v},$ we have $\gamma _{i}=\gamma _{ij}=\gamma
_{j}$ as sets, and then $\gamma _{i}=\pm \gamma _{j}$.

\ \ Let $B=B_{v}\cup (\tbigcup\limits_{\#(\gamma _{i}\cap \gamma
_{j})<+\infty }$ $(\gamma _{i}\cap \gamma _{j})).$ Then we have $\#B<+\infty
.$ The partition $\alpha =\gamma _{1}+\cdots +\gamma _{k}$ would be refined
into a new partition $\alpha =\alpha _{1}+\cdots +\alpha _{n}$ by $B,$ such
that for each $j=1,\cdots ,n,$ $\alpha _{j}^{\circ }\cap B=\emptyset ,$ and
the end points of $\gamma _{j}$ are contained in $B.$ We claim for each pair
of two subarcs $\alpha _{i}$ and $\alpha _{j},$ either $\alpha _{i}^{\circ
}\cap \alpha _{j}=\emptyset =\alpha _{j}^{\circ }\cap \alpha _{i},$ or $%
\alpha _{i}=\pm \alpha _{j}.$ When $\#(\alpha _{i}\cap \alpha _{j})=+\infty
, $ some component $\alpha _{ij}$ of $\alpha _{i}\cap \alpha _{j}$ is a
subarc $\alpha _{i},$ if $\alpha _{ij}$ inherits the orientation of $\alpha
_{i}.$ Then the end points of $\alpha _{ij}$ are contained in $B.$ Since $%
\alpha _{i}^{\circ }\cap B=\emptyset =\alpha _{j}^{\circ }\cap B,$ we have $%
\alpha _{i}=\alpha _{ij}$ $=\alpha _{j}$ as sets, and then $\alpha _{i}=\pm
\alpha _{j}.$ When $\#(\alpha _{i}\cap \alpha _{j})<+\infty ,$ by
definition, we have $\alpha _{i}\cap \alpha _{j}\subset B.$ Since $\alpha
_{j}^{\circ }\cap B=\emptyset =\alpha _{i}^{\circ }\cap B,$ we obtain $%
\alpha _{i}^{\circ }\cap \alpha _{j}=\emptyset =\alpha _{j}^{\circ }\cap
\alpha _{i}.$ Hence, $\alpha =\alpha _{1}+\cdots +\alpha _{n}$ is the
desired partition.
\end{proof}

\begin{definition}
\label{admissible} By Lemma \ref{partitioned}, for each $\Sigma \in \mathbf{F%
}$, $\partial \Sigma $ could always be partitioned into simple analytic arcs 
$\alpha _{1}+\alpha _{2}+\cdots +\alpha _{n},$ such that for each pair $%
(i,j) $ with $1\leq i,j\leq n,$ either $\alpha _{i}=\pm \alpha _{j}$, or $%
\alpha _{i}^{\circ }\cap \alpha _{j}=\emptyset =\alpha _{j}^{\circ }\cap
\alpha _{i} $. Such a partition is called \emph{an admissible partition},
and each $\alpha _{j}$ is called \emph{an admissible subarc} of $\partial
\Sigma .$
\end{definition}

For an admissible subarc $\Gamma $ of $\partial \Sigma ,$ there is exactly
one component of $S\backslash \partial \Sigma $ on the left hand side of $%
\Gamma ,$ and exactly one component of $S\backslash \partial \Sigma $ on the
right hand side of $\Gamma $. This property doesn't hold for most
non-admissible subarcs.

\begin{lemma}
\label{finite components} For each $\Sigma \in \mathbf{F},$ the set $%
S\backslash \partial \Sigma $ has only a finite number of components. Let $W$
be a component of $S\backslash \partial \Sigma ,$ and let $\Gamma $ be an
admissible subarc of $\partial \Sigma .$ Then the following hold.

(1) $W$ is a simply-connected domain, and $\partial W$ is a finite union of
admissible subarcs of $\partial \Sigma $.

(2) If $\Gamma ^{\circ }\cap \partial W\neq \emptyset ,$ then $\Gamma
\subset \partial W$.

(3) There is exactly one component $W_{\Gamma }$ of $S\backslash \partial
\Sigma $, called \emph{the component on the left hand side of} $\Gamma ,$
such that $\Gamma ^{\circ }\cap \partial W_{\Gamma }\neq \emptyset ,$ and $%
\Gamma $ is a subarc of $\partial W_{\Gamma }$. Similarly, there is exactly
one component $W_{-\Gamma }$ of $S\backslash \partial \Sigma $, called \emph{%
the component on the right hand side of} $\Gamma ,$ such that $\Gamma
^{\circ }\cap \partial W_{-\Gamma }\neq \emptyset ,$ and $-\Gamma $ is a
subarc of $\partial W_{-\Gamma }$. For other components $W$ of $S\backslash
\partial \Sigma ,$ $\Gamma ^{\circ }\cap \partial W\neq \emptyset .$ It is
possible that $W_{\Gamma }=W_{-\Gamma },$ and then $W_{\Gamma }$ is not a
Jordan domain.
\end{lemma}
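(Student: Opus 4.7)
The plan is to view $\partial\Sigma$, regarded as a point set on $S$, as a finite connected embedded planar graph, and then appeal to standard planar topology. First I would apply Lemma \ref{partitioned} to write $\partial \Sigma = \alpha_1 + \cdots + \alpha_n$ with the $\alpha_j$'s pairwise either equal up to sign or meeting only at endpoints; the underlying set $\bigcup_j \alpha_j$ then carries the structure of a finite graph $G \subset S$, with vertices the endpoints of the $\alpha_j$'s and edges the distinct (as sets) admissible subarcs. Since $\Sigma \in \mathbf{F}$, the boundary $\partial U$ is a single Jordan curve, so $G = f(\partial U)$ is the continuous image of a connected set, hence connected. The complement of a finite embedded graph in $S$ has only finitely many components (by a straightforward induction on the number of edges, using the Jordan curve theorem at each step), proving the first assertion.

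Fix a component $W$ of $S \setminus G$. The key topological step is to show that $S \setminus W$ is connected: write $S \setminus W = G \cup \bigcup_{W' \neq W} \overline{W'}$; each $\overline{W'}$ is connected, its boundary lies in $G$, and $G$ itself is connected, so the whole union is connected. A connected open subset of $S$ whose complement is connected is simply connected (standard planar topology, e.g.\ via the Riemann mapping theorem or Alexander duality), giving the simple-connectivity half of (1). The description of $\partial W$ as a finite union of admissible subarcs will follow at once from (2), because $\partial W \subset G$ is closed and (2) forces any admissible subarc whose interior meets $\partial W$ to lie entirely in $\partial W$.

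For (2), take $p \in \Gamma^\circ \cap \partial W$ and a small neighbourhood $V_p$ of $p$ on which some conformal map $\varphi$ sends $\Gamma$ onto a segment of $\mathbb{R}$. By admissibility, no other admissible subarc meets $\Gamma^\circ$, so after shrinking $V_p \cap G = V_p \cap \Gamma$, and $V_p \setminus \Gamma$ has exactly two connected components, the local left and right sides of $\Gamma$ at $p$. Since $p \in \partial W$, one of these sides intersects $W$; the same local side remains in $W$ along all of $\Gamma^\circ$ by a standard connectedness argument along the arc. Hence $\Gamma^\circ \subset \partial W$, and since $\partial W$ is closed in $S$, $\Gamma \subset \partial W$. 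For (3), define $W_\Gamma$ and $W_{-\Gamma}$ as the components of $S \setminus \partial\Sigma$ meeting the left and right local sides of $\Gamma$ respectively; uniqueness is immediate from the two-sided local structure, and by (2) both $\Gamma$ and $-\Gamma$ lie in their respective boundaries. Any other component $W'$ with $\Gamma^\circ \cap \partial W' \neq \emptyset$ would have to occupy one of these two local sides, contradicting the uniqueness. Finally, when $W_\Gamma = W_{-\Gamma}$ (for instance when $\Gamma$ is a ``bridge'' dangling into the interior of a single face), $\partial W_\Gamma$ contains $\Gamma$ with both orientations and so cannot be a single Jordan curve, whence $W_\Gamma$ is not a Jordan domain.

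The main obstacle I foresee is the simple-connectivity step: it hinges on $G$ being connected, which in turn relies crucially on $\partial U$ being a single Jordan curve, i.e.\ on $\Sigma \in \mathbf{F}$ (the analogous claim would fail for a doubly-connected surface without adjustment). The remainder is really local: the two-sided analytic structure of simple analytic arcs drives both (2) and (3), and the only global input beyond the connectedness of $G$ is the classical fact that an open connected subset of $S$ is simply connected iff its complement is connected.
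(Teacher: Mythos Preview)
Your argument is correct and follows essentially the same route as the paper's proof: the connectedness of $\partial\Sigma$ (as the image of a single Jordan curve) drives simple-connectivity of each complementary component, and the local two-sided structure of an admissible arc yields (2) and (3) via the same open--closed argument along $\Gamma^{\circ}$. The only organizational difference is that you establish finiteness of the components up front via the embedded-graph picture, whereas the paper deduces it at the end from the observation in (3) that each admissible subarc borders at most two components.
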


\begin{proof}
\ \ A component $W$ of the open set $S\backslash \partial \Sigma $ must be a
domain. If $W$ is not simply-connected, then $W$ must separate $\partial W,$
and hence $W$ also separates $\partial \Sigma ,$ which is a contradiction to
the connectedness of $\partial \Sigma .$ Thus, each component $W$ is a
simply-connected domain. Let $\Gamma $ be an admissible subarc of $\partial
\Sigma .$ Then $\Gamma ^{\circ }$ is disjoint from other admissible subarcs
of $\partial \Sigma $ which are not coincident with $\Gamma .$ Let $p\in
\Gamma ^{\circ }\cap \partial W.$ Then there is a small disk $D_{p}$
centered at $p$, such that $\Gamma ^{\circ }\cap D_{p}=\partial \Sigma \cap
D_{p}$ divides $D_{p}$ into two components $D^{+}$ and $D^{-}$, on the left
hand side and on the right hand side of $\Gamma ^{\circ }\cap D_{p}$
respectively. Evidently, at least one of $D^{+}$ and $D^{-}$ is contained in 
$W,$ and then $\Gamma ^{\circ }\cap D_{p}\subset \partial W,$ which implies $%
\Gamma ^{\circ }\cap \partial W$ is open in $\Gamma ^{\circ }.$ Since $%
\Gamma ^{\circ }\cap \partial W$ is also closed in $\Gamma ^{\circ },$ and $%
\Gamma ^{\circ }$ is connected, we have $\Gamma ^{\circ }\cap \partial
W=\Gamma ^{\circ }$ and $\Gamma \subset \partial W.$ Hence, $\partial W$ is
the union of all admissible subarcs $\Gamma _{j}$ of $\partial \Sigma ,$
such that $\Gamma _{j}^{\circ }\cap \partial W\neq \emptyset .$

\ \ By the notations above, $D^{+}$ is contained in a unique component $%
W_{\Gamma }$ of $S\backslash \partial \Sigma ,$ and then $\Gamma $ is a
subarc of $\partial W_{\Gamma }.$ Similarly, $D^{-}$ is contained in a
unique component $W_{-\Gamma }$ of $S\backslash \partial \Sigma ,$ and then $%
-\Gamma $ is a subarc of $\partial W_{-\Gamma }.$ If $W_{\Gamma }=W_{-\Gamma
},$ then 
\begin{equation*}
p\in D_{p}=(\Gamma ^{\circ }\cap D_{p})\cup D^{+}\cup D^{-}\subset \overline{%
W_{\Gamma }},
\end{equation*}
and hence $p\in \partial W_{\Gamma }$ is contained in $\overline{W_{\Gamma }}%
^{\circ },$ which implies $W_{\Gamma }$ is not a Jordan domain. For a third
component $W_{0}$ of $S\backslash \partial \Sigma ,$ $W_{0}\cap
D^{+}=\emptyset =W_{0}\cap D^{-},$ and then $p\notin \overline{W_{0}},$
which implies $\overline{W_{0}}\cap \Gamma ^{\circ }=\emptyset .$ Because
each admissible subarc corresponds to two components at most, $S\backslash
\partial \Sigma $ has only a finite number of components. \ 
\end{proof}

\begin{definition}
\label{isomorphic} Two covering surfaces $(f,\overline{U})$ and $(g,%
\overline{V})$ are called \emph{isomorphic}, if there exists $\varphi \in
Homeo^{+}(\overline{U},\overline{V}),$ such that $f=g\circ \varphi .$ For
two closed curves $(f_{1},\alpha _{1})$ and $(f_{2},\alpha _{2})$, we define 
$(f_{1},\alpha _{1})=(f_{2},\alpha _{2})$ $\emph{(}$\emph{up to a
reparametrization)}, if there exists $\psi \in Homeo^{+}(\alpha _{1},\alpha
_{2}),$ such that $f_{2}|_{\alpha _{2}}\circ \psi =f_{1}|_{\alpha _{1}}.$
\end{definition}

When $\Sigma _{1}$ and $\Sigma _{2}\in \mathbf{F}$ are isomorphic, $\partial
\Sigma _{1}=\partial \Sigma _{2}$ up to a reparametrization, $A(\Sigma
_{1})=A(\Sigma _{2}),$ $\overline{n}(\Sigma _{1},E_{q})=\overline{n}(\Sigma
_{2},E_{q}),$ and $H(\Sigma _{1})=H(\Sigma _{2}).$ Thus, $\Sigma _{1}$ and $%
\Sigma _{2}$ can replace each other in our study. Evidently, each $\Sigma
\in \mathbf{F}$ is isomorphic to some $(f,\overline{\Delta })\in \mathbf{F}.$
A surprising result is that each surface is isomorphic to another surface
defined by a non-constant meromorphic function. The following powerful
Stoilow's Theorem claims that lightness is a topological characterization of
non-constant meromorphic functions.

\begin{theorem}
(Stoilow \cite{S}, pp. 120--121) Let $U$ be a domain on $S$, and $f\in
OPL(U).$ Then there exists a domain $V$ on $S,$ and two mappings $\varphi
\in Homeo^{+}(U,V)$ and $g\in Mero^{\ast }(V),$ such that $f=g\circ \varphi
. $
\end{theorem}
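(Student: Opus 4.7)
The plan is to endow $U$ with a new Riemann surface structure under which $f$ becomes holomorphic, and then to invoke Koebe's uniformization theorem for planar Riemann surfaces to realize that structure as a domain $V$ on $S$. First I would establish that the branch set $C(f)$ is discrete in $U$: since $f\in OPL(U)$ is open, continuous, and discrete, each fiber $f^{-1}(w)$ is already discrete in $U$, and a standard argument using small disks about $p\in U$ shows that in a sufficiently small neighborhood of $p$ every point other than $p$ is regular, so $C(f)$ cannot accumulate.

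The main technical step is the local Stoilow normal form at each branch point $p\in C(f)$: there exist homeomorphisms $\psi$ and $\eta$ from neighborhoods of $p$ and $q=f(p)$ onto $\Delta$, sending $p$ and $q$ to $0$, under which $f$ becomes the power map $z\mapsto z^{n_p}$ for some integer $n_p\geq 2$. To obtain this I would choose a small Jordan disk $D_q\ni q$ and let $N$ be the component of $f^{-1}(D_q)$ containing $p$, shrinking so that $f^{-1}(q)\cap N=\{p\}$ and no other branch value of $f|_N$ lies in $\overline{D_q}$. A careful argument then shows that $f\colon N\setminus\{p\}\to D_q\setminus\{q\}$ is a proper local homeomorphism, hence a finite-sheeted covering of some degree $n_p\geq 2$. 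Since $\Delta\setminus\{0\}$ has infinite cyclic fundamental group and the $n_p$-th power map $\Delta\setminus\{0\}\to\Delta\setminus\{0\}$ is the unique connected $n_p$-fold covering, lifting a homeomorphism $D_q\setminus\{q\}\cong\Delta\setminus\{0\}$ through this power map and extending across $p$ by continuity produces the desired $\psi$.

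With the local normal form in hand, I would define a complex atlas $\mathcal{A}$ on $U$ by using, at each regular point, a chart on $S$ post-composed with $f$ (valid since $f$ is a local homeomorphism there), and at each branch point the homeomorphism $\psi$ constructed above. Transitions between two regular charts are compositions of holomorphic transitions on $S$ with $f\circ f^{-1}$ and are holomorphic by construction; transitions between a regular chart and a branch chart are holomorphic on the punctured branch chart and bounded, hence extend holomorphically across $0$ by Riemann's removable singularity theorem. By construction $f\colon(U,\mathcal{A})\to S$ is holomorphic, and $(U,\mathcal{A})$ is topologically planar. Koebe's uniformization theorem for planar Riemann surfaces then yields a conformal equivalence $\varphi^{-1}\colon V\to(U,\mathcal{A})$ with $V$ a domain in $S$; setting $g=f\circ\varphi^{-1}$ produces $g\in Mero^{\ast}(V)$ with $f=g\circ\varphi$ and $\varphi\in Homeo^{+}(U,V)$.

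The step I expect to be the main obstacle is verifying that $f\colon N\setminus\{p\}\to D_q\setminus\{q\}$ is a genuine covering rather than merely a local homeomorphism. This reduces to the properness of $f|_N$, which in turn requires choosing $N$ so that $\partial N$ is a Jordan curve mapped properly to $\partial D_q$. Carrying this out demands a careful combination of the discreteness of $C(f)$ and of the fiber $f^{-1}(q)$ near $p$ together with the openness of $f$; this is the technical heart of Stoilow's original argument. Once properness is established the covering conclusion and the identification of the degree $n_p$ follow from standard covering-space theory, and the rest of the proof is atlas bookkeeping together with a direct appeal to uniformization.
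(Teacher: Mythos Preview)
The paper does not prove this theorem; it is stated with a citation to Stoilow's monograph \cite{S} and then immediately applied to derive Proposition~\ref{iso to mero}. So there is no ``paper's own proof'' to compare against.

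Your outline is the classical argument and is essentially correct. The skeleton---discreteness of $C(f)$, local normal form $z\mapsto z^{n_p}$ at branch points via a properness/covering argument on punctured neighborhoods, pulling back the complex structure of $S$ through $f$ to make $(U,\mathcal{A})$ a Riemann surface on which $f$ is holomorphic, and then invoking Koebe's planar uniformization---is exactly how Stoilow's theorem is proved. Two small points worth tightening: first, when you check that the transition maps of $\mathcal{A}$ are holomorphic rather than antiholomorphic, you must use the orientation-preserving hypothesis explicitly (this is where ``OP'' in $OPL$ is consumed); second, the discreteness of $C(f)$ is not quite as immediate as you suggest, since discreteness of each individual fiber does not by itself prevent branch points over \emph{different} values from accumulating---you need the local degree argument (or an invariance-of-domain style argument) already at this stage to see that near any $p$ the map is a branched cover of some finite degree, whence only $p$ itself can be critical. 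You have correctly identified the properness of $f|_N$ as the crux; once that is in hand the rest is routine.
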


For a covering surface $(f,\overline{U})$, by definition, $f$ extends to $%
f_{1}\in OPL(W)$ on a domain $W$ containing $\overline{U}$. By Stoilow's
Theorem, there exists a domain $V$ on $S$ and two mappings $\varphi \in
Homeo^{+}(W,V)$ and $g\in Mero^{\ast }(V),$ such that $f_{1}=g\circ \varphi
. $ Then $\varphi (\overline{U})$ is a closed domain, and $\partial \varphi (%
\overline{U})=\varphi (\partial U)$ is a finite disjoint union of Jordan
curves. Evidently, $(f,\overline{U})$ is isomorphic to the covering surface $%
(g|_{\varphi (\overline{U})},\varphi (\overline{U}))$, which proves the
following proposition.

\begin{proposition}
\label{iso to mero} Each covering surface $\Sigma $ is isomorphic to another
covering surface $(f_{0},\overline{U_{0}})$, with $f_{0}\in Mero^{\ast }(%
\overline{U_{0}})$.
\end{proposition}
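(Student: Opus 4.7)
The plan is to invoke Stoilow's Theorem directly and then cut down to the relevant closed subdomain. Starting from $\Sigma = (f,\overline{U})$, Definition \ref{light} of $OPL(\overline{U})$ gives an extension $f_{1}\in OPL(W)$ on some open domain $W\supset \overline{U}$. Applied to $f_{1}:W\to S$, Stoilow's Theorem produces a domain $V\subset S$, an orientation-preserving homeomorphism $\varphi :W\to V$, and a non-constant meromorphic function $g\in Mero^{\ast}(V)$ with $f_{1}=g\circ \varphi$.

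I would then set $\overline{U_{0}}:=\varphi (\overline{U})$ and $f_{0}:=g|_{\overline{U_{0}}}$. Since $\varphi $ is a homeomorphism of $W$ onto $V$, $\overline{U_{0}}$ is a closed domain sitting inside the open set $V$, and $\partial U_{0}=\varphi (\partial U)$ is still a finite disjoint union of Jordan curves. Because $V$ is a neighbourhood of $\overline{U_{0}}$ on which $g$ is meromorphic and non-constant, $f_{0}\in Mero^{\ast }(\overline{U_{0}})$. The restriction $\varphi |_{\overline{U}}\in Homeo^{+}(\overline{U},\overline{U_{0}})$ satisfies $f=f_{1}|_{\overline{U}}=(g\circ \varphi )|_{\overline{U}}=f_{0}\circ \varphi |_{\overline{U}}$, which is exactly the isomorphism condition of Definition \ref{isomorphic}.

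The remaining box to check is that $(f_{0},\overline{U_{0}})$ really satisfies all three axioms of Definition \ref{surface}. Items 1 and 2 are immediate from the construction, so the only nontrivial point is that each boundary curve $(f_{0},\varphi (\alpha _{j}))$ is piecewise analytic. But as image curves on $S$, $(f_{0},\varphi (\alpha _{j}))$ and $(f,\alpha _{j})$ coincide up to the reparametrization $\varphi |_{\alpha _{j}}$, and the piecewise analyticity condition depends only on the image curve; hence it transfers from the original surface $\Sigma $.

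In short, Stoilow's Theorem is doing essentially all of the work, and I do not expect a serious obstacle: the proof reduces to a bookkeeping verification that pushing forward by the homeomorphism $\varphi $ preserves each structure (closed domain, disjoint Jordan boundary curves, orientation, piecewise analyticity) required by Definitions \ref{surface} and \ref{isomorphic}. The only place where one must be attentive is the transition from the open domain $W$ given by Stoilow to the closed subdomain $\overline{U_{0}}$; but since $\varphi $ is a homeomorphism between $W$ and $V$, this is automatic.
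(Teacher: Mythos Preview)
Your proof is correct and follows essentially the same approach as the paper: extend $f$ to an open domain, apply Stoilow's Theorem, and push $\overline{U}$ forward by the resulting homeomorphism. The paper is slightly terser (it says ``Evidently'' where you carefully verify the axioms of Definition~\ref{surface}), but the argument is the same.
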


Especially, each closed surface is isomorphic to another closed surface
defined by a rational function. Readers could realize the requirement about
lightness in Definition \ref{surface} is natural and appropriate now.

\subsection{Local behavior and boundary behavior}

In this subsection, the local behavior and the boundary behavior of an
orientation-preserving light mapping are discussed. The following lemma
could be proved by Proposition \ref{iso to mero}, which claims locally $f$
behaves similarly to a power mapping $z\mapsto z^{d},$ via local coordinates
transformations.

\begin{notation}
\label{interval} Throughout, we denote the upper open semi-disk $\{z\in
\Delta |$ $\func{Im}z>0\}$ by $\Delta ^{+},$ and the simple subarc of $%
\partial \Delta $ from $1$ to $-1$ by $1\overset{\partial \Delta }{%
\rightarrow }-1.$ The oriented line segment from $a$ to $b$ in $%
\mathbb{C}
$ is denoted by $[a,b]$, even if $a>b,$ or $a,b\notin 
\mathbb{R}
.$ The line segment $[a,b]$ is exactly the closed interval $[a,b],$ when $%
a<b,$ and the orientation is ignored
\end{notation}

\begin{lemma}
\label{cov-1} Let $\Sigma =(f,\overline{\Delta })\in \mathbf{F},$ and $p\in
S $. Then for each sufficiently small open disk $D(p)$ centered at $p,$ the
following hold.

(i) $f^{-1}(D(p))\cap \Delta $ is a disjoint union of Jordan domains $U_{j}$%
, $j=1,2,\dots ,n,$ such that for each $1\leq j\leq n,$ $\overline{U_{j}}%
\cap f^{-1}(p)=\{x_{j}\}$.

(ii) If $x_{j}\in \Delta ,$ then there exist two homeomorphisms $\varphi
_{_{j}}\in Homeo^{+}(\overline{\Delta },\overline{U_{j}})$ with $\varphi
_{_{j}}(0)=x_{j}$ and $\psi _{j}\in Homeo^{+}(\overline{D(p)},\overline{%
\Delta })$ with $\psi _{j}(p)=0,$ such that $\psi _{j}\circ f|_{\overline{%
U_{j}}}\circ \varphi _{_{j}}(z)=z^{d_{j}}$ on $\overline{\Delta }$, with $%
d_{j}\in 
\mathbb{N}
_{+}.$

(iii) If $x_{j}\in \partial \Delta ,$ then there exist two homeomorphisms $%
\varphi _{_{j}}\in Homeo^{+}(\overline{\Delta ^{+}},\overline{U_{j}})$ with $%
\varphi _{_{j}}(0)=x_{j}$ and $\psi _{j}\in Homeo^{+}(\overline{D(p)},%
\overline{\Delta })$ with $\psi _{j}(p)=0,$ such that $\varphi
_{_{j}}([-1,1])=\partial \Delta \cap \partial U_{j}$ and $\psi _{j}\circ f|_{%
\overline{U_{j}}}\circ $ $\varphi _{_{j}}(z)=z^{d_{j}}$ on $\overline{\Delta
^{+}},$ with $d_{j}\in 
\mathbb{N}
_{+}.$
\end{lemma}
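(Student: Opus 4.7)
The plan is to reduce to the meromorphic case via Proposition \ref{iso to mero} and then analyse the local structure of a meromorphic function. Applying Proposition \ref{iso to mero} gives an isomorphism $f = g \circ \varphi$ with $g \in Mero^{\ast }(\overline{U_0})$ and $\varphi \in Homeo^{+}(\overline{\Delta}, \overline{U_0})$; since $\varphi$ is a homeomorphism, every Jordan-domain / branched-cover conclusion of the lemma for $g$ on the closed Jordan domain $\overline{U_0}$ transfers to $f$ on $\overline{\Delta}$ by composing the output homeomorphisms with $\varphi^{-1}$. Hence it suffices to prove the three claims with $g$ in place of $f$ and $\overline{U_0}$ in place of $\overline{\Delta}$.

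With $g$ meromorphic on a neighbourhood of $\overline{U_0}$, the fibre $g^{-1}(p) \cap \overline{U_0}$ is a finite set $\{y_1, \dots, y_n\}$ by discreteness and compactness. Around each $y_j$ there is a local conformal coordinate $w$ centred at $y_j$ in which $g$ takes the canonical form $g = p + w^{d_j}$, after a M\"obius normalisation of the target if $p = \infty$ or $y_j$ is a pole; here $d_j \in \mathbb{N}_{+}$ is the local multiplicity. For $D(p)$ sufficiently small, the component of $g^{-1}(D(p))$ around $y_j$ becomes the disk $\{|w| < r^{1/d_j}\}$ in this chart, and the components for distinct $y_j$ are disjoint, yielding part (i). When $y_j \in U_0$, the whole local chart sits inside $U_0$ and the canonical form directly supplies conformal $\varphi_j, \psi_j$ with $\psi_j \circ g \circ \varphi_j(z) = z^{d_j}$, proving part (ii).

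The main obstacle is part (iii). When $y_j \in \partial U_0$, the Jordan curve $\partial U_0$ meets the local $w$-chart in a Jordan arc $\tilde\gamma$ through $0$, with $U_0$ on one side, so $\overline{U_j}$ becomes a topological closed half-disk bounded by a portion of $\tilde\gamma$ together with an arc of $\{|w| = r^{1/d_j}\}$. The model map $z \mapsto z^{d_j}: \overline{\Delta^{+}} \to \overline{\Delta}$ has the same combinatorial profile: a $d_j$-fold branched fold map with a single interior branch point of order $d_j$ lying on the boundary edge $[-1,1]$, the upper semicircle covering $\partial \Delta$ once, and $[-1,1]$ mapping onto the arc $[(-1)^{d_j},1]$. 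Using the piecewise analyticity of $(g, \partial U_0) = (f, \partial \Delta)$ together with Lemma \ref{partitioned}, one shrinks $D(p)$ so that $\tilde\gamma$ is contained in a single admissible analytic subarc of $\partial U_0$ and $g(\tilde\gamma)$ is a single piecewise analytic Jordan arc through $p$. One then selects $\psi_j \in Homeo^{+}(\overline{D(p)}, \overline{\Delta})$ with $\psi_j(p) = 0$ carrying $g(\tilde\gamma)$ onto $z^{d_j}([-1,1])$ (a Schoenflies-type straightening of a Jordan arc to a line segment), and defines $\varphi_j$ sector by sector by matching the $d_j$ sheets of the fold $g|_{\overline{U_j}}$ with the $d_j$ sheets of $z^{d_j}|_{\overline{\Delta^{+}}}$; the resulting $\varphi_j$ satisfies $\varphi_j(0) = y_j$, $\varphi_j([-1,1]) = \partial U_0 \cap \partial U_j$, and $\psi_j \circ g \circ \varphi_j(z) = z^{d_j}$. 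Composition with $\varphi^{-1}$ transfers the construction back to $\overline{\Delta}$.
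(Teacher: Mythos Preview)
Your reduction via Proposition~\ref{iso to mero} and the treatment of parts (i) and (ii) match the paper's proof.

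For part (iii) the strategy is again the same in outline, but the sketch contains errors and is incomplete. The description of the model is wrong on two counts: under $z\mapsto z^{d_j}$ the upper semicircle of $\partial\Delta^{+}$ sweeps out an angle of $d_j\pi$ on $\partial\Delta$ (so covers it $d_j/2$ times, not once), and for even $d_j$ the image of $[-1,1]$ is $[0,1]$, not the degenerate interval $[(-1)^{d_j},1]=\{1\}$. More substantively, you never separate the two structurally different boundary behaviours. The paper first applies Lemma~\ref{partitioned} to the two short image arcs $f(\alpha_j),f(\beta_j)$ on either side of $x_j$ to obtain the dichotomy $f(\alpha_j)=-f(\beta_j)$ (folded; even exponent) versus $f(\alpha_j)\cap f(\beta_j)=\{p\}$ (unfolded; odd exponent). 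It then chooses $\psi_j$ so that these image arcs land on $[0,1]$ (resp.\ on $[-1,0]$ and $[0,1]$) and constructs $\varphi_j$ explicitly as
\[
\varphi_j(z)=\exp\Bigl(\tfrac{1}{m}\int_{\psi_j(f(\gamma_z))}\tfrac{dw}{w}\Bigr),
\]
a well-defined $m$-th root via path integration in $\overline{\Delta}\setminus\{0\}$, with simple connectivity of $\overline{U_j}\setminus\{x_j\}$ guaranteeing path independence; the argument principle then shows $\varphi_j\in Homeo^{+}(\overline{U_j},\overline{\Delta^{+}})$. Your ``sector by sector matching'' gestures at the same lifting, but without the odd/even split and a correct picture of the model it is not a proof; in particular your assertion that $g(\tilde\gamma)$ is a Jordan arc through $p$ fails in the folded case, where $p$ is an endpoint and the arc is traversed twice.
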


\begin{proof}
\ \ By Proposition \ref{iso to mero}, we may assume $\Sigma =(f,\overline{U}%
)\in \mathbf{F},$ and $f\in Mero^{\ast }(\overline{U}).$ For $p\in S,$ $%
f^{-1}(p)=\{x_{1},\cdots ,x_{n}\}$ is a finite set. When $x_{j}\in U,$ via
two local coordinates transformations, we may assume $p=0=x_{j},$ and $f$ is
holomorphic near $0$. Then, $f(z)=z^{d_{j}}g(z),$ with $d_{j}\in 
\mathbb{N}
_{+},$ such that $g(z)$ is also holomorphic near $0,$ but $g(0)\neq 0.$
Thus, $g(z)^{1/d_{j}}$ has a single-valued branch $G(z)$ near $0,$ and then
as a single-valued branch of $f(z)^{1/d_{j}},$ $h(z)=zG(z)$ is biholomorphic
near $0.$ Therefore, $f\circ h^{-1}(z)=$ $z^{d_{j}}$ holds on a small disk
centered at $0.$ In conclusion, there exist two homeomorphisms $\varphi
_{_{j}}\in Homeo^{+}(\overline{\Delta },\overline{U_{j}})$ with $\varphi
_{_{j}}(0)=x_{j}$ and $\psi _{j}\in Homeo^{+}(\overline{D(p)},\overline{%
\Delta })$ with $\psi _{j}(p)=0,$ such that $\psi _{j}\circ f|_{\overline{%
U_{j}}}\circ \varphi _{_{j}}(z)=z^{d_{j}}$ on $\overline{\Delta }.$

\ \ When $x_{j}\in \partial U,$ and $D(p)$ is sufficiently small, there are
two short subarcs $a_{j}\overset{\alpha _{j}}{\rightarrow }x_{j}$ and $x_{j}%
\overset{\beta _{j}}{\rightarrow }b_{j}$ of $\partial U,$ such that two
simple analytic arcs $f(\alpha _{j})$ and $f(\beta _{j})$ satisfy $\partial
D(p)\cap f(\alpha _{j})=\{f(a_{j})\},$ and $\partial D(p)\cap f(\beta
_{j})=\{f(b_{j})\}.$ By Lemma \ref{partitioned}, we may assume either $%
f(\alpha _{j})=-$ $f(\beta _{j}),$ or $f(\alpha _{j})\cap f(\beta
_{j})=\{p\}.$

\ \ When $f(\alpha _{j})=-$ $f(\beta _{j}),$ since $f\in Mero^{\ast }(%
\overline{U}),$ we may assume the component of $f^{-1}(\overline{D(p)})$
containing $x_{j}$ is a closed Jordan domain $\overline{U_{j}},$ and $\alpha
_{j}\cup \beta _{j}=\partial U_{j}\cap \partial U.$ In addition, we may
assume $x_{j}$ is the unique possible branch point of $f$ in $\overline{U_{j}%
}.$ Then $f|_{\overline{\partial U_{j}\backslash \partial U}}$ is a $d_{j}$%
-to-$1$ local homeomorphism onto $\partial D(p)$, if $a_{j}$ and $b_{j}$ in $%
\overline{\partial U_{j}\backslash \partial U}$ are identified. Let $\psi
_{j}\in Homeo^{+}(\overline{D(p)},\overline{\Delta }),$ with 
\begin{equation*}
\psi _{j}(f(\beta _{j}))=[0,1]=-\psi _{j}(f(\alpha _{j})).
\end{equation*}%
For each $z\in \overline{U_{j}}\backslash \{x_{j}\},$ there is a simple path 
$b_{j}\overset{\gamma _{z}}{\rightarrow }z$ in $\overline{U_{j}}\backslash
\{x_{j}\},$ such that $1\overset{\psi _{j}(f(\gamma _{z}))}{\rightarrow }%
\psi _{j}(f(z))$ is a rectifiable path in $\overline{\Delta }\backslash
\{0\}.$ For a fixed point $z\in \overline{U_{j}}\backslash \{x_{j}\}$, since 
$\overline{U_{j}}\backslash \{x_{j}\}$ is simply-connected, all paths $b_{j}%
\overset{\gamma _{z}}{\rightarrow }z$ in $\overline{U_{j}}\backslash
\{x_{j}\}$ are homotopic in $\overline{U_{j}}\backslash \{x_{j}\}$, and then
their images $\psi _{j}(f(\gamma _{z}))$ are homotopic in $\overline{\Delta }%
\backslash \{0\}.$ Let%
\begin{equation*}
\varphi _{j}(z)\overset{def}{=}\exp (\frac{1}{2d_{j}}\tint\nolimits_{\psi
_{j}(f(\gamma _{z}))}\frac{dw}{w})\text{ for }z\in \overline{U_{j}}%
\backslash \{x_{j}\},\text{ and }\varphi _{j}(x_{j})\overset{def}{=}0.
\end{equation*}%
Then $\varphi _{j}(z)$ is well-defined, since $\exp (\frac{1}{2d_{j}}%
\tint\nolimits_{\psi _{j}(f(\gamma _{z}))}\frac{dw}{w})$ is independent of $%
\gamma _{z}$ for a fixed point $z.$ Thus, we have%
\begin{equation*}
\varphi _{j}(z)^{2d_{j}}=\exp (\tint\nolimits_{\psi _{j}(f(\gamma _{z}))}%
\frac{dw}{w})=\exp (\ln (\psi _{j}(f(z))))=\psi _{j}(f(z)).
\end{equation*}%
Since lightness is a local property, locally $\varphi _{j}(z)$ is a
single-valued branch of $(\psi _{j}\circ f(z))^{1/(2d_{j})},$ and then we
have $\varphi _{j}\in OPL(\overline{U_{j}})$. For $z=a_{j},$ $\gamma _{a_{j}}
$ could be chosen as $\overline{\partial U_{j}\backslash \partial U},$ and
then $\psi _{j}(f(\gamma _{a_{j}}))=\partial \Delta $ with the multiplicity $%
d_{j}$. Hence $\varphi _{j}(\overline{\partial U_{j}\backslash \partial U}%
)=(1\overset{\partial \Delta }{\rightarrow }-1),$ and $\varphi
_{j}(a_{j})=-1.$ Similarly, we could verify $\varphi _{j}(b)=1,$ $\varphi
_{j}(\alpha _{j})=[-1,0],$ and $\varphi _{j}(\beta _{j})=[0,1].$ In a word, $%
\varphi _{j}$ maps $\partial U_{j}=(\partial U_{j}\backslash \partial
U)+\alpha _{j}+\beta _{j}$ homeomorphically onto $\partial \Delta ^{+},$ and
by the principle of arguments for $\varphi _{j}\in OPL(\overline{U_{j}})$,
we have $\varphi _{j}\in $ $Homeo^{+}(\overline{U_{j}},\overline{\Delta ^{+}}%
).$ Finally, 
\begin{equation*}
\psi _{j}\circ f\circ \varphi _{j}^{-1}(z)=(\varphi _{j}(\varphi
_{j}^{-1}(z)))^{2d_{j}}=z^{2d_{j}}
\end{equation*}%
holds on $\overline{\Delta ^{+}}.$

\ \ Now we consider the case that $f(\alpha _{j})\cap $ $f(\beta _{j})=\{p\}.
$ Since $f\in Mero^{\ast }(\overline{U}),$ we may assume the component $%
\overline{U_{j}}$ of $f^{-1}(\overline{D(p)})$ containing $x_{j}$ is a
closed Jordan domain, and $\alpha _{j}\cup \beta _{j}=\partial U_{j}\cap
\partial U.$ In addition, we may assume $x_{j}$ is the unique possible
branch point of $f$ in $\overline{U_{j}}.$ Then $(f,\overline{\partial
U_{j}\backslash \partial U})$ is a path in $\partial D(p)$ from $f(b_{j})$
to $f(a_{j}),$ and we may assume $(f,\overline{\partial U_{j}\backslash
\partial U})$ covers $f(b_{j})\overset{\partial D(p)}{\rightarrow }f(a_{j})$
for $d_{j}$ times, and then $(f,\overline{\partial U_{j}\backslash \partial U%
})$ covers $f(a_{j})\overset{\partial D(p)}{\rightarrow }f(b_{j})$ for $%
(d_{j}-1)$ times. Let $\psi _{j}\in Homeo^{+}(\overline{D(p)},\overline{%
\Delta }),$ such that 
\begin{equation*}
\psi _{j}(f(\beta _{j}))=[0,1]=-\psi _{j}(f(\alpha _{j})).
\end{equation*}%
For each $z\in \overline{U_{j}}\backslash \{x_{j}\},$ there is a simple path 
$b_{j}\overset{\gamma _{z}}{\rightarrow }z$ in $\overline{U_{j}}\backslash
\{x_{j}\},$ and then $1\overset{\psi _{j}(f(\gamma _{z}))}{\rightarrow }\psi
_{j}(f(z))$ is a rectifiable path in $\overline{\Delta }\backslash \{0\}.$
Since $\overline{U_{j}}\backslash \{x_{j}\}$ is simply-connected, for a
fixed point $z\in \overline{U_{j}}\backslash \{x_{j}\}$, all paths $b_{j}%
\overset{\gamma _{z}}{\rightarrow }z$ in $\overline{U_{j}}\backslash
\{x_{j}\}$ are homotopic in $\overline{U_{j}}\backslash \{x_{j}\}$, and then
their images $\psi _{j}(f(\gamma _{z}))$ are homotopic in $\overline{\Delta }%
\backslash \{0\}.$ Let%
\begin{equation*}
\varphi _{j}(z)\overset{def}{=}\exp (\frac{1}{2d_{j}-1}\tint\nolimits_{\psi
_{j}(f(\gamma _{z}))}\frac{dw}{w})\text{ for }z\in \overline{U_{j}}%
\backslash \{x_{j}\},\text{ and }\varphi _{j}(x_{j})\overset{def}{=}0.
\end{equation*}%
Then $\varphi _{j}(z)$ is well-defined, since $\exp (\frac{1}{2d_{j}-1}%
\tint\nolimits_{\psi _{j}(f(\gamma _{z}))}\frac{dw}{w})$ is independent of $%
\gamma _{z}$ for a fixed point $z.$ Evidently,%
\begin{equation*}
\varphi _{j}(z)^{2d_{j}-1}=\exp (\tint\nolimits_{\psi _{j}(f(\gamma _{z}))}%
\frac{dw}{w})=\exp (\ln (\psi _{j}(f(z))))=\psi _{j}(f(z)).
\end{equation*}

Locally, $\varphi _{j}(z)$ is a single-valued branch of $(\psi _{j}\circ
f(z))^{1/(2d_{j}-1)},$ and then we have $\varphi _{j}\in OPL(\overline{U_{j}}%
)$. For $z=a_{j},$ $\gamma _{a_{j}}$ could be chosen as $\overline{\partial
U_{j}\backslash \partial U},$ and then 
\begin{equation*}
\psi _{j}(f(\gamma _{a_{j}}))=(1\overset{\partial \Delta }{\rightarrow }%
-1)+(-1\overset{\partial \Delta }{\rightarrow }1)+\cdots +(1\overset{%
\partial \Delta }{\rightarrow }-1),
\end{equation*}%
where the number of $(1\overset{\partial \Delta }{\rightarrow }-1)$ is $%
d_{j},$ and the number of $(-1\overset{\partial \Delta }{\rightarrow }1)$ is 
$(d_{j}-1)$. Hence, $\varphi _{j}(\overline{\partial U_{j}\backslash
\partial U})=(1\overset{\partial \Delta }{\rightarrow }-1),$ and $\varphi
_{j}(a_{j})=-1.$ Similarly, we could verify $\varphi _{j}(b)=1,$ $\varphi
_{j}(\alpha _{j})=[-1,0],$ and $\varphi _{j}(\beta _{j})=[0,1].$ In a word, $%
\varphi _{j}$ maps $\partial U_{j}=(\partial U_{j}\backslash \partial
U)+\alpha _{j}+\beta _{j}$ homeomorphically onto $\partial \Delta ^{+},$ and
by the principle of arguments, we have $\varphi _{j}\in $ $Homeo^{+}(%
\overline{U_{j}},\overline{\Delta ^{+}}).$ Finally, 
\begin{equation*}
\psi _{j}\circ f\circ \varphi _{j}^{-1}(z)=(\varphi _{j}(\varphi
_{j}^{-1}(z)))^{2d_{j}-1}=z^{2d_{j}-1}
\end{equation*}%
holds on $\overline{\Delta ^{+}}.$
\end{proof}

\begin{remark}
\label{cov-2} Lemma \ref{cov-1} also works for covering surfaces $\Sigma =(f,%
\overline{U})\notin \mathbf{F}$. For $p\in \overline{U},$ there is a closed
neighbourhood $\overline{V}$ of $p$ in $\overline{U},$ such that $V$ is a
Jordan domain. Thus, Lemma \ref{cov-1} could be applied to $f|_{\overline{V}%
}\in OPL(\overline{V}),$ and this remark follows.
\end{remark}

\begin{definition}
\label{folded} As in Lemma \ref{cov-1}, let $\Sigma =(f,\overline{U})\in 
\mathbf{F}$, $x_{j}\in \overline{U},$ and $p=f(x_{j})\in S.$ When $x_{j}\in
U,$ the \emph{multiplicity} $v_{f}(x_{j})$ is defined as the exponent $d_{j}$
in (ii) of Lemma \ref{cov-1}. When $x_{j}\in \partial U,$ let $d_{j}$ be the
exponent in (iii) of Lemma \ref{cov-1}. If $d_{j}$ is even, then $x_{j}$ is
called a \emph{folded point}. and the \emph{multiplicity} $v_{f}(x_{j})$ is
defined as $\frac{d_{j}}{2}$. If $p$ is a folded point with $f(p)\in E_{q}$,
then $p$ is called a \emph{special folded point}. If $d_{j}$ is odd, then
the \emph{multiplicity} $v_{f}(x_{j})$ is defined as $\frac{d_{j}+1}{2}.$ In
each case of Lemma \ref{cov-1}, \emph{the branch index} $b(f,x_{j})$ is
defined as $v_{f}(x_{j})-1.$ By Remark \ref{cov-2}, these definitions could
be generalized to non-simply-connected covering surfaces.
\end{definition}

For a covering surface $\Sigma =(f,\overline{U}),$ there are only a finite
number of folded points and branch points. The mapping $f|_{\partial U}$ is
locally injective at $z\in \partial U,$ iff $z$ is not a folded point of $%
\Sigma .$

\begin{definition}
\label{branch} For a covering surface $\Sigma =(f,\overline{U})$ and $w\in
S, $ we define 
\begin{equation*}
n(f,w)\overset{def}{=}\tsum\limits_{z\in f^{-1}(w)\cap \overline{U}%
}v_{f}(z)-\#f^{-1}(w)\cap \partial U,
\end{equation*}%
\begin{equation*}
B(f,w)\overset{def}{=}\tsum\limits_{z\in f^{-1}(w)\cap \overline{U}%
}b(f,z)=n(f,w)-\overline{n}(f,w),
\end{equation*}%
and 
\begin{equation*}
B(f,E_{q})\overset{def}{=}\tsum\limits_{a_{j}\in E_{q}}B(f,a_{j}),\text{ }%
B(f,E_{q}^{c})\overset{def}{=}\tsum\limits_{w\in CV(\Sigma )\backslash
E_{q}}B(f,w).
\end{equation*}
\end{definition}

For convenience, in notations $\overline{n},n,b,B,$ the mapping $f$ could be
replaced by the corresponding covering surface $\Sigma =(f,\overline{U})$,
like $n(f,w)=n(\Sigma ,w).$ Two isomorphic covering surfaces $(f,\varphi (%
\overline{U}))$ and $(f\circ \varphi ,\overline{U})$ also share the same
quantities like $n(\Sigma ,w),$ $B(\Sigma ,w),$ and $CV(\Sigma ),$ and so
they can often replace each other in our study. However, usually we have 
\begin{equation*}
C(f)=\varphi (C(f\circ \varphi ))\neq C(f\circ \varphi ).
\end{equation*}

Let $\Sigma =(f,S)$ be a closed surface. By Proposition \ref{iso to mero}, $%
\Sigma $ is isomorphic to another closed surface $(g,S),$ such that $g$ is a
rational function of degree $d\geq 1.$ The \emph{degree} $\deg (\Sigma )$ of 
$\Sigma $ is defined as $d.$

\begin{proposition}
\label{R<0} For each closed surface $\Sigma ,$ $R(\Sigma )=-8\pi -4\pi
B(\Sigma ,E_{q}^{c})\leq -8\pi .$
\end{proposition}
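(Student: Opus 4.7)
The plan is to reduce to the case of a rational function via Proposition \ref{iso to mero}, compute each quantity in the definition of $R(\Sigma)$ explicitly in terms of the degree $d$ and the branching, and then apply the Riemann--Hurwitz relation.

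First, by Proposition \ref{iso to mero}, $\Sigma$ is isomorphic to a closed surface $(g,S)$ with $g\in Mero^\ast(S)$ a rational function; since all the quantities appearing in $R(\Sigma)$ and $B(\Sigma,E_q^c)$ are invariant under isomorphism, I may replace $\Sigma$ by $(g,S)$. Let $d=\deg(\Sigma)\geq 1$. Since $\partial U=\emptyset$ in the closed case, Definition \ref{branch} gives $n(g,w)=\sum_{z\in g^{-1}(w)}v_g(z)$, which for a rational map of degree $d$ equals $d$ for every $w\in S$. Hence $B(g,w)=d-\overline{n}(g,w)$ for every $w\in S$, and in particular
\begin{equation*}
\overline{n}(g,E_q)=\sum_{v=1}^{q}\overline{n}(g,a_v)=qd-B(g,E_q).
\end{equation*}

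Next I compute the area. For a rational map of degree $d$, $\overline{n}(g,w)=d$ off the finite set $CV(\Sigma)$, so
\begin{equation*}
A(\Sigma)=\iint_S\overline{n}(g,w)\,d\sigma(w)=d\cdot 4\pi.
\end{equation*}
Substituting into the definition of $R(\Sigma)$ yields
\begin{equation*}
R(\Sigma)=(q-2)\cdot 4\pi d-4\pi(qd-B(g,E_q))=-8\pi d+4\pi B(g,E_q).
\end{equation*}

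The final ingredient is Riemann--Hurwitz applied to $g\colon S\to S$, which in the present notation reads $\sum_{z\in S}b(g,z)=2d-2$, i.e.\ $B(g,E_q)+B(g,E_q^c)=2d-2$. Substituting $B(g,E_q)=2d-2-B(g,E_q^c)$ into the expression above gives
\begin{equation*}
R(\Sigma)=-8\pi d+4\pi(2d-2)-4\pi B(g,E_q^c)=-8\pi-4\pi B(g,E_q^c),
\end{equation*}
and since $B(g,E_q^c)\geq 0$, we obtain $R(\Sigma)\leq -8\pi$, as required.

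There is no real obstacle here; the only subtlety is a bookkeeping one, namely checking that the combinatorial $v_g$ and $b(g,\cdot)$ of Definition \ref{folded}--\ref{branch} agree, for interior points with $\partial U=\emptyset$, with the classical multiplicity and branch index of a rational map, so that the degree identity $n(g,w)=d$ and the Riemann--Hurwitz relation $\sum b(g,z)=2d-2$ can be invoked in the form stated. This agreement is immediate from part (ii) of Lemma \ref{cov-1}, which gives the local model $z\mapsto z^{d_j}$ with $v_g(z)=d_j$.
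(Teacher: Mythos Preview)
Your proof is correct and follows essentially the same approach as the paper: compute $A(\Sigma)=4\pi d$ and $\overline{n}(\Sigma,E_q)=qd-B(\Sigma,E_q)$ from the degree, then invoke Riemann--Hurwitz $B(\Sigma,E_q)+B(\Sigma,E_q^c)=2d-2$ to reduce $R(\Sigma)$ to $-8\pi-4\pi B(\Sigma,E_q^c)$. The only difference is cosmetic---you substitute Riemann--Hurwitz after expressing $R(\Sigma)$ in terms of $B(g,E_q)$, whereas the paper substitutes it into $\overline{n}(\Sigma,E_q)$ first---and your explicit remark about Lemma~\ref{cov-1}(ii) justifying the classical multiplicity interpretation is a nice touch.
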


\begin{proof}
\ \ Let $\deg (\Sigma )=d.$ Then $A(\Sigma )=4\pi d,$ $n(\Sigma ,a)=d$ for
each $a\in E_{q},$ and $n(\Sigma ,E_{q})=dq.$ By Riemann-Hurewitz Formula, 
\begin{equation*}
B(\Sigma ,E_{q})+B(\Sigma ,E_{q}^{c})=2d-2,
\end{equation*}%
and thus 
\begin{equation*}
\overline{n}(\Sigma ,E_{q})=n(\Sigma ,E_{q})-B(\Sigma
,E_{q})=dq-2d+2+B(\Sigma ,E_{q}^{c}).
\end{equation*}%
Therefore, we have 
\begin{eqnarray*}
R(\Sigma ) &=&(q-2)A(\Sigma )-4\pi \overline{n}(\Sigma ,E_{q}) \\
&=&4\pi (q-2)d-4\pi \lbrack dq-2d+2+B(\Sigma ,E_{q}^{c})] \\
&=&-8\pi -4\pi B(\Sigma ,E_{q}^{c})\leq -8\pi .
\end{eqnarray*}
\end{proof}

In a word, for a closed surface $\Sigma $, $R(\Sigma )$ assumes its maximum $%
-8\pi ,$ iff $CV(\Sigma )\subset E_{q}.$ Because $CV(\Sigma )\subset E_{q}$
is a beneficial condition to enlarge $R(\Sigma )$ for closed surfaces, it is
reasonable to regard $CV(\Sigma )\subset E_{q}$ as a beneficial condition to
enlarge $H(\Sigma )$ for $\Sigma \in \mathbf{F}$.

\begin{definition}
\label{covering sum} For $\Sigma \in \mathbf{F}$, let $U_{1},\cdots ,U_{m}$
be all the components of $S\backslash \partial \Sigma .$ By Lemma \ref{iso
to mero}, for each $U_{j},$ $n(\Sigma ,U_{j})\overset{def}{=}n(\Sigma ,w)$
is independent of $w\in U_{j}.$ The\emph{\ covering sum} $sum(\Sigma )$ is
defined as $\tsum\limits_{1\leq j\leq m}n(\Sigma ,U_{j})\in 
\mathbb{N}
_{+}.$
\end{definition}

For $\Sigma =(f,\overline{U})\in \mathbf{F}$ as above, and $w\in
U_{j}\backslash CV(\Sigma )$, we have 
\begin{equation*}
\#f^{-1}(w)\cap U=\overline{n}(\Sigma ,w)=n(\Sigma ,w)=n(\Sigma ,U_{j}).
\end{equation*}%
Since $\overline{n}(\Sigma ,w)=n(\Sigma ,w)$ almost everywhere, by the
definition of $A(\Sigma ),$ 
\begin{equation*}
A(\Sigma )=\tiint\nolimits_{%
\mathbb{C}
}\frac{4n(\Sigma ,u+iv)dudv}{(1+|u|^{2}+|v|^{2})^{2}}=\tsum\limits_{1\leq
j\leq m}n(\Sigma ,U_{j})A(U_{j}).
\end{equation*}

\begin{definition}
\label{bound-multi,D} Let $\Sigma \in \mathbf{F},$ and let $a\overset{\Gamma 
}{\rightarrow }b$ be an admissible subarc of $\partial \Sigma .$ The number
of times such that $\partial \Sigma $ passes through $\Gamma $ from $a$ to $%
b,$ is called the \emph{boundary multiplicity} $m^{+}(\partial \Sigma
,\Gamma ).$ The number of times such that $\partial \Sigma $ passes through $%
-\Gamma $ from $b$ to $a,$ is denoted by $m^{-}(\partial \Sigma ,\Gamma
)=m^{+}(\partial \Sigma ,-\Gamma ).$
\end{definition}

In other words, for $\Sigma =(f,\overline{U})\in \mathbf{F},$ $%
m^{+}(\partial \Sigma ,\Gamma )$ is the number of subarcs $\alpha $ of $%
\partial U,$ such that $(f,\alpha )=\Gamma .$ By Definition \ref{admissible}%
, $\partial \Sigma $ has an admissible partition $\partial \Sigma =\gamma
_{1}+\cdots +\gamma _{l}.$ Let $\Gamma _{1},\cdots ,$ $\Gamma _{k}$ be some
admissible subarcs of $\partial \Sigma $ in $\{\gamma _{1},\cdots ,\gamma
_{l}\},$ such that each admissible subarc $\gamma _{j}$ is coincident with
exactly one of $\Gamma _{1},\cdots ,\Gamma _{k}.$ Evidently, we have 
\begin{eqnarray*}
L(\partial \Sigma ) &=&L(\gamma _{1})+L(\gamma _{2})+\cdots +L(\gamma _{l})
\\
&=&\tsum\limits_{1\leq j\leq k}(m^{+}(\partial \Sigma ,\Gamma
_{j})+m^{-}(\partial \Sigma ,\Gamma _{j}))L(\Gamma _{j}).
\end{eqnarray*}

For each $\Gamma _{j},$ let $U_{j}^{+}$ and $U_{j}^{-}$ be the components of 
$S\backslash \partial \Sigma ,$ on the left hand side and on the right hand
side of $\Gamma _{j}$ respectively (possibly $U_{j}^{+}=U_{j}^{-}$). For
each $w\in \Gamma _{j}^{\circ },$ 
\begin{equation*}
n(\Sigma ,w)=n(\Sigma ,U_{j}^{+})-m^{+}(\partial \Sigma ,\Gamma
_{j})=n(\Sigma ,U_{j}^{-})-m^{-}(\partial \Sigma ,\Gamma _{j}).
\end{equation*}%
For $\Sigma _{1},$ $\Sigma _{2}\in \mathbf{F},$ if $\partial \Sigma
_{1}=\partial \Sigma _{2}$ up to a reparametrization (see Definition \ref%
{isomorphic}), then $m^{+}(\partial \Sigma _{1},\Gamma )=m^{+}(\partial
\Sigma _{2},\Gamma )$ for each admissible subarc $\Gamma $ of $\partial
\Sigma _{1}$ (and $\partial \Sigma _{2}$).

\subsection{Arcs and lifts}

For $f\in OPL(\overline{U})$ and a path $\beta =\beta (t)\subset S,$ a path $%
\alpha =\alpha (t)\subset \overline{U}$ is called a \emph{lift} of $\beta $
by $f$, if for each $t,$ $f(\alpha (t))=\beta (t).$ By Lemma \ref{cov-1},
for $p_{0}\in U,$ a sufficiently short path $\beta $ from $f(p_{0})$ has
exactly $v_{f}(p_{0})$ lifts from $p_{0}.$ Since $f$ is locally homeomorphic
at interior regular points, each lift could be uniquely extended, until it
meets $\partial U$ or $C(f).$ The following lemmas are based on this idea.
Throughout, for an arc $\alpha ,$ $\alpha ^{\circ }$ denotes the interior of 
$\alpha $, namely the open subarc of $\alpha $ by removing the end points.

\begin{lemma}
\label{lift-3} Let $\Sigma =(f,\overline{U})\in \mathbf{F},$ and let $%
p_{0}\in \partial U$ be a non-folded point. Let $f(p_{0})\overset{\beta }{%
\rightarrow }q_{2}$ be a simple path on $S$, on the left hand side of $%
\partial \Sigma $ near $f(p_{0}),$ such that $\beta \cap \partial \Sigma
=\{f(p_{0})\}$ and $\beta ^{\circ }\cap CV(\Sigma )=\emptyset .$ Then $\beta 
$ has exactly $d$ lifts $p_{0}\overset{\alpha _{1}}{\rightarrow }%
p_{1},\cdots ,$ $p_{0}\overset{\alpha _{d}}{\rightarrow }p_{d}$ from $p_{0},$
such that $d=v_{f}(p_{0}),$ and $\alpha _{1}^{\circ },\cdots ,\alpha
_{d}^{\circ }\subset U\backslash C(\Sigma ).$
\end{lemma}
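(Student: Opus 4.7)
My plan is to reduce to the local normal form supplied by Lemma \ref{cov-1}(iii) at $p_{0}$, carry out an explicit count of initial lifts for the model map $z \mapsto z^{d_{j}}$, and then extend each local lift along the whole of $\beta$ by a standard clopen argument.

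Since $p_{0} \in \partial U$ is non-folded, Definition \ref{folded} tells me that the exponent in Lemma \ref{cov-1}(iii) is an odd integer $d_{j}$ with $v_{f}(p_{0}) = (d_{j}+1)/2$. Hence there exist homeomorphisms $\varphi_{j} \in Homeo^{+}(\overline{\Delta^{+}}, \overline{U_{j}})$ with $\varphi_{j}(0) = p_{0}$ and $\psi_{j} \in Homeo^{+}(\overline{D(p)}, \overline{\Delta})$ with $\psi_{j}(f(p_{0})) = 0$, satisfying $\psi_{j} \circ f \circ \varphi_{j}(z) = z^{d_{j}}$ on $\overline{\Delta^{+}}$ and $\varphi_{j}([-1,1]) = \partial \Delta \cap \partial U_{j}$. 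In these coordinates, $\partial \Sigma$ near $f(p_{0})$ is the image $\{t^{d_{j}} : t \in [-1,1]\} = [-1,1]$ (because $d_{j}$ is odd), oriented from $-1$ to $1$, so its left hand side is the upper half disk $\Delta^{+}$. Restricting $\beta$ to a short initial segment lying in $D(p)$, I obtain a simple path $\widetilde{\beta} = \psi_{j} \circ \beta$ from $0$ into $\Delta^{+}$ with $\widetilde{\beta}^{\circ} \subset \Delta^{+} \setminus \{0\}$. Every lift of $\widetilde{\beta}$ from $0$ in $\overline{\Delta^{+}}$ must take the form
\begin{equation*}
\widetilde{\alpha}_{k}(t) = |\widetilde{\beta}(t)|^{1/d_{j}} \exp\left(\frac{i(\arg \widetilde{\beta}(t) + 2\pi k)}{d_{j}}\right), \quad k \in \{0, 1, \dots, d_{j} - 1\}.
\end{equation*}
For small $t > 0$ the argument $\arg \widetilde{\beta}(t)$ lies near some $\phi_{0} \in (0, \pi)$, and an elementary count shows that $(\phi_{0} + 2\pi k)/d_{j} \in (0, \pi)$ precisely for $k = 0, 1, \dots, (d_{j}-1)/2$. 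This yields exactly $v_{f}(p_{0}) = (d_{j}+1)/2$ initial lifts of $\widetilde{\beta}$ from $0$ into $\overline{\Delta^{+}}$; pulling back by $\varphi_{j}$ produces $d = v_{f}(p_{0})$ initial lifts of $\beta$ from $p_{0}$.

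To extend each local lift $\alpha_{k}$ along the whole of $\beta$ and to exclude further lifts, let $T_{k} \subset [0, L(\beta)]$ be the set of times up to which $\alpha_{k}$ admits a continuous extension inside $\overline{U}$ that still lifts $\beta$; by compactness $T_{k}$ is closed. To see it is open, observe that an interior lift point cannot lie on $\partial U$, for otherwise its image under $f$ would lie in $\beta^{\circ} \cap \partial \Sigma = \emptyset$; hence the lift point is in $U$, and the hypothesis $\beta^{\circ} \cap CV(\Sigma) = \emptyset$ forces it to avoid $C(\Sigma)$, so $f$ is locally a homeomorphism at that point and the lift extends through a neighbourhood. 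Thus $T_{k} = [0, L(\beta)]$ and $\alpha_{k}^{\circ} \subset U \setminus C(\Sigma)$. Conversely any lift of $\beta$ from $p_{0}$ must, in the local chart near $p_{0}$, coincide with one of the $v_{f}(p_{0})$ model lifts above, and is then uniquely determined by the local-homeomorphism extension, so there are precisely $d$ lifts in total. The main technical point I expect is the combinatorial count of admissible residues $k$ producing $(d_{j}+1)/2 = v_{f}(p_{0})$; the subsequent global extension is a routine lifting argument.
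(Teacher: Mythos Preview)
Your proof is correct and follows essentially the same strategy as the paper: reduce to the local model $z\mapsto z^{2d-1}$ at $p_{0}$ to count the $d=v_{f}(p_{0})$ initial lifts into $\overline{\Delta^{+}}$, then run a clopen argument on the parameter interval to extend each lift globally using $\beta^{\circ}\cap\partial\Sigma=\emptyset$ and $\beta^{\circ}\cap CV(\Sigma)=\emptyset$. The only differences are organizational---the paper tracks all $d$ lifts simultaneously via a single set $T$ whereas you extend each lift separately via $T_{k}$---and the paper spells out the closedness step (showing $\lim_{t\to t_{0}^{-}}\alpha_{k}(t)$ exists, via a subsequence and the local normal form at the limit point) more explicitly than your ``by compactness'', but the substance is the same.
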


\begin{proof}
\ \ Let $d=v_{f}(p_{0}).$ Let $\beta $ be parametrized by $t\in \lbrack
0,1]. $ We claim there exists $\delta >0,$ such that the initial subarc $%
\beta ([0,\delta ])$ of $\beta $ has exactly $d$ lifts $\alpha _{1,\delta
},\cdots ,\alpha _{d,\delta }$ from $p_{0}.$ By Lemma \ref{cov-1},
temporarily we may also assume $p_{0}=0=f(p_{0}),$ $f(z)=z^{2d-1}$ on $%
\overline{\Delta ^{+}},$ and $U\cap \Delta =\Delta ^{+}.$ Since $\beta
^{\circ }\cap \lbrack -1,1]=\emptyset ,$ and $\beta $ is on the left hand
side of $\partial \Sigma $ near $0,$ there exists $\delta >0,$ such that $%
\beta ([0,\delta ])$ is contained in $\Delta ^{+}\cup \{0\}.$ Hence, when we
regard $0<\arg \beta (t)<\pi $ for $0<t\leq \delta ,$ all $d$ lifts of $%
\beta ([0,\delta ])$ could be expressed as $\gamma _{1}(t)=\beta
(t)^{1/(2d-1)},$ $\gamma _{2}(t)=e^{\frac{2\pi i}{2d-1}}\beta
(t)^{1/(2d-1)}, $ $\cdots ,$ and $\gamma _{d}(t)=e^{\frac{2(d-1)\pi i}{2d-1}%
}\beta (t)^{1/(2d-1)}$. Our claim is verified.

\ We define $T$ as 
\begin{equation*}
T\overset{def}{=}\{t\in (0,1]:\text{the subarc }\beta ([0,t])\text{ of }%
\beta \text{ has exactly }d\text{ lifts from }p_{0}\}.
\end{equation*}%
Evidently, $t\in T$ implies $(0,t]\subset T,$ and thus $T$ is a non-empty
interval. We claim $T$ is both relatively open and relatively closed in $%
(0,1],$ and then $T=(0,1].$

\ \ Assume $(0,t_{0})\subset T,$ and $\alpha _{t_{0}}^{\ast }$ is one lift
of $\beta ([0,t_{0}))$ from $p_{0}$. We claim $\underset{t\rightarrow
t_{0}^{-}}{\lim }\alpha _{t_{0}}^{\ast }(t)$ must be a point in $%
f^{-1}(\beta (t_{0}))\cap U,$ and then $(0,t_{0}]\subset T$. The sequence $%
\{\alpha _{t_{0}}^{\ast }(t_{0}-\frac{1}{n}):nt_{0}>1,$ $n\in 
\mathbb{N}
\}$ always has a convergent subsequence $\{\alpha _{t_{0}}^{\ast }(t_{0}-%
\frac{1}{n_{k}})\}$, converging to some $z_{t_{0}}\in \overline{U}.$ Then we
have 
\begin{equation*}
f(z_{t_{0}})=\underset{k\rightarrow \infty }{\lim }f(\alpha _{t_{0}}^{\ast
}(t_{0}-\frac{1}{n_{k}}))=\underset{k\rightarrow \infty }{\lim }\beta (t_{0}-%
\frac{1}{n_{k}})=\beta (t_{0})\notin \partial \Sigma ,
\end{equation*}%
and then $z_{t_{0}}\in U\cap f^{-1}(\beta (t_{0})).$ By Lemma \ref{cov-1},
temporarily we may assume $z_{t_{0}}=0=\beta (t_{0}),$ and $f(z)=z^{d}$ on $%
\overline{\Delta }$. Then for all large $k,$ $\alpha _{t_{0}}^{\ast }(t_{0}-%
\frac{1}{n_{k}})\in \Delta ,$ and there is $t_{1}\in (0,t_{0}),$ such that $%
\beta ([t_{1},t_{0}])\subset \Delta .$ Evidently, $\beta
([t_{1},t_{0}))\subset $ $\Delta \backslash \{0\}$ has exactly $d$ lifts in $%
\Delta \backslash \{0\}$, denoted by $\widetilde{\gamma }_{1}(t)=\beta
(t)^{1/d},$ $\widetilde{\gamma }_{2}(t)=e^{\frac{2\pi i}{d}}\beta (t)^{1/d},$
$\cdots ,$ and $\widetilde{\gamma }_{d}(t)=e^{\frac{2\pi (d-1)i}{d}}\beta
(t)^{1/d}.$ $\alpha _{t_{0}}^{\ast }([t_{1},t_{0}))$ must be one of these
lifts, say $\widetilde{\gamma }_{j}(t)=e^{\frac{2\pi (j-1)i}{d}}\beta
(t)^{1/d}.$ Then 
\begin{equation*}
\underset{t\rightarrow t_{0}^{-}}{\lim }\alpha _{t_{0}}^{\ast }(t)=\underset{%
t\rightarrow t_{0}^{-}}{\lim }e^{\frac{2\pi (j-1)i}{d}}\beta (t)^{1/d}=e^{%
\frac{2\pi (j-1)i}{d}}\beta (t_{0})^{1/d}=0=z_{t_{0}},
\end{equation*}

and hence $\alpha _{t_{0}}(t)=\left\{ 
\begin{array}{c}
\alpha _{t_{0}}^{\prime }(t)\text{ for }0\leq t<t_{0}, \\ 
z_{t_{0}}\text{ for }t=t_{0},%
\end{array}%
\right. $ is a lift of $\beta ([0,t_{0}])$. Therefore, $\beta ([0,t_{0}])$
has exactly $d$ lifts from $p_{0},$ and $(0,t_{0}]\subset T$.

\ \ Assume for some $\delta \leq t<1,$ $(0,t]\subset T.$ The subarc $\beta
([0,t])$ has exactly $d$ lifts $p_{0}\overset{\alpha _{1,t}}{\rightarrow }%
p_{1,t},\cdots ,$ $p_{0}\overset{\alpha _{d,t}}{\rightarrow }p_{d,t}$ from $%
p_{0}.$ We claim that $t<\sup T.$ Since $\beta \cap \partial \Sigma
=\{f(p_{0})\}$ and $\beta ^{\circ }\cap CV(\Sigma )=\emptyset ,$ we have $%
p_{1,t},\cdots ,$ $p_{d,t}\in U\backslash C(\Sigma ).$ Then by Lemma \ref%
{cov-1}, $f$ is locally homeomorphic at $p_{1,t},\cdots ,$ $p_{d,t}$, and
thus each lift in $\{\alpha _{1,t},\cdots ,\alpha _{d,t}\}$ could be
slightly extended, in neighbourhoods of $p_{1,t},\cdots ,$ $p_{d,t}$
respectively. In other words, there exists $t_{1}\in (t,1],$ such that the
subarc $\beta ([0,t_{1}])$ also has exactly $d$ lifts $p_{0}\overset{\alpha
_{1,t_{1}}}{\rightarrow }p_{1,t_{1}},\cdots ,$ $p_{0}\overset{\alpha
_{d,t_{1}}}{\rightarrow }p_{d,t_{1}}$ from $p_{0}.$ Therefore, we have $%
(0,t_{1}]\subset T$, and $T$ is relatively open in $(0,1].$

\ \ In conclusion, $T=(0,1],$ and $\beta $ has exactly $d$ lifts $p_{0}%
\overset{\alpha _{1}}{\rightarrow }p_{1},\cdots ,$ $p_{0}\overset{\alpha _{d}%
}{\rightarrow }p_{d}$ from $p_{0}.$ Since $\beta \cap \partial \Sigma
=\{f(p_{0})\}$ and $\beta ^{\circ }\cap CV(\Sigma )=\emptyset ,$ we have $%
\alpha _{1}^{\circ },\cdots ,\alpha _{d}^{\circ }\subset U\backslash
C(\Sigma ).$
\end{proof}

\begin{lemma}
\label{lifts-1} Let $\Sigma =(f,\overline{U})\in \mathbf{F},$ $p_{0}\in U,$
and let $f(p_{0})\overset{\beta ^{\prime }}{\rightarrow }q_{2}$ be a simple
path on $S,$ such that $\beta ^{\prime \circ }\cap CV(\Sigma )=\emptyset .$
Then there exists a subarc $f(p_{0})\overset{\beta }{\rightarrow }q_{1}$ of $%
\beta ^{\prime },$ such that the following hold.

(1) $\beta $ has exactly $d$ lifts $p_{0}\overset{\alpha _{1}}{\rightarrow }%
p_{1},\cdots ,$ $p_{0}\overset{\alpha _{d}}{\rightarrow }p_{d}$ from $p_{0},$
and $d=v_{f}(p_{0}).$

(2) $\alpha _{1}^{\circ },\cdots ,\alpha _{d}^{\circ }\subset U\backslash
C(\Sigma ).$

(3) Either $\beta =\beta ^{\prime },$ or for some $j=1,2,\cdots ,d,$ $%
p_{j}\in \partial U.$
\end{lemma}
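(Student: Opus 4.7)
The plan is to imitate the structure of the proof of Lemma \ref{lift-3}, replacing the boundary-point setup with the interior setup, and replacing the condition ``stop at the endpoint'' with ``stop the first time some lift reaches $\partial U$.'' Let $d=v_f(p_0)$ and parametrize $\beta'$ by $t\in[0,1]$. First I would apply Lemma \ref{cov-1}(ii) at the interior point $p_0\in U$ to obtain local homeomorphisms $\varphi,\psi$ making $\psi\circ f\circ\varphi(z)=z^d$ on $\overline{\Delta}$. Once the initial segment $\beta'([0,\delta])$ lies inside the image of the local chart, the $d$ branches $e^{2\pi ik/d}(\psi\circ\beta'(t))^{1/d}$, $k=0,\dots,d-1$, give exactly $d$ lifts from $p_0$; hence the set
\[
T=\bigl\{\,t\in(0,1]:\beta'|_{[0,t]}\text{ has exactly }d\text{ lifts from }p_0\text{ whose interiors lie in }U\setminus C(\Sigma)\,\bigr\}
\]
is non-empty. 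The hypothesis $\beta'^{\circ}\cap CV(\Sigma)=\emptyset$ ensures the ``interiors in $U\setminus C(\Sigma)$'' clause is really just the condition ``interiors avoid $\partial U$,'' because any lift point whose image is in $\beta'^{\circ}$ automatically avoids $C(\Sigma)$. Restricting lifts shows $T$ is an interval of the form $(0,t^{\ast}]$ or $(0,t^{\ast})$.

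The main substance is showing $T$ is closed and that $t^{\ast}$ is either $1$ or a moment when some lift reaches $\partial U$. Suppose $(0,t^{\ast})\subseteq T$ and denote the $d$ lift-paths on $[0,t^{\ast})$ by $\alpha_j^{\ast}$. I would argue that each $\alpha_j^{\ast}(t)$ has a \emph{unique} limit $z_j\in\overline{U}$ as $t\to t^{\ast -}$: since $f$ is light, $f^{-1}(\beta'(t^{\ast}))$ is a finite set, and choosing pairwise-disjoint coordinate neighborhoods around these preimage points (via Lemma \ref{cov-1}) forces $\alpha_j^{\ast}(t)$ to eventually lie in one component, giving the limit. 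Continuity of $f$ yields $f(z_j)=\beta'(t^{\ast})$, so each lift extends to $[0,t^{\ast}]$. To see the $d$ limits $z_1,\dots,z_d$ are distinct, I would use the local models: at a non-branch interior point, uniqueness of lifts through regular points rules out a collision; at a point $z_j\in\partial U$ with $v_f(z_j)=1$ (so $d_j\in\{1,2\}$ in Lemma \ref{cov-1}(iii), the only possibilities when $z_j\notin C(\Sigma)$ and $t^{\ast}<1$), the local model $z\mapsto z^{d_j}$ on $\overline{\Delta^{+}}$ has at most one preimage in $\Delta^{+}$ for each image point near $0$, so only one lift can accumulate at $z_j$. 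Hence $t^{\ast}\in T$.

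It remains to diagnose $t^{\ast}$. If some $z_j\in\partial U$, then $\beta=\beta'([0,t^{\ast}])$ together with the $d$ lifts $p_0\overset{\alpha_j}{\rightarrow}z_j$ satisfies (1)--(3). Otherwise all $z_j\in U$; since $t^{\ast}\le 1$, the image $\beta'(t^{\ast})\in\beta'^{\circ}$ whenever $t^{\ast}<1$ and therefore avoids $CV(\Sigma)$, forcing each $z_j\in U\setminus C(\Sigma)$. Lemma \ref{cov-1}(ii) at each $z_j$ then provides a neighborhood on which $f$ is a homeomorphism, and using these neighborhoods I would extend each lift slightly past $t^{\ast}$, so that $[0,t^{\ast}+\varepsilon]\subseteq T$ for some $\varepsilon>0$. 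This contradicts the maximality of $t^{\ast}$ unless $t^{\ast}=1$, in which case $\beta=\beta'$ works. The main obstacle I anticipate is the uniqueness/distinctness analysis of the $d$ limit points $z_j$; this is the only place that uses information beyond the routine topology of path lifting, and it is exactly where the boundary local model from Lemma \ref{cov-1}(iii) must be invoked rather than the interior model.
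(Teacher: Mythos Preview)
Your argument is correct and follows the same continuation-and-maximality scheme as the paper, which is terser and simply refers back to the closedness step already proved in Lemma~\ref{lift-3} before reading off (3) from the maximality of $\sup T$. One small point: the paragraph establishing that the endpoints $z_1,\dots,z_d$ are distinct is unnecessary---the lemma only asserts $d$ lifts as \emph{paths}, and these are automatically distinct because their germs at $p_0$ already differ; neither closedness of $T$ nor the extension past $t^{\ast}$ requires distinct endpoints (indeed they may coincide when $t^{\ast}=1$ and $q_2\in CV(\Sigma)$, which is precisely the situation exploited in Cases~(1)--(2) of Proposition~\ref{in to bd}).
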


\begin{proof}
\ \ As the case in Lemma \ref{lift-3}, let $d=v_{f}(p_{0}),$ and let $\beta
^{\prime }$ be parametrized by $t\in \lbrack 0,1].$ The initial subarc of $%
\beta ^{\prime }$ has exactly $d$ lifts from $p_{0}.$ We define%
\begin{equation*}
T\overset{def}{=}\left\{ t\in (0,1]:%
\begin{array}{c}
\text{the subarc }\beta ^{\prime }([0,t])\text{ of }\beta ^{\prime }\text{
has exactly }d\text{ lifts }\alpha _{1,t},\cdots , \\ 
\alpha _{d,t}\text{ from }p_{0},\text{ such that }\alpha _{1,t}^{\circ
},\cdots ,\alpha _{d,t}^{\circ }\subset U\backslash C(\Sigma ).%
\end{array}%
\right\}
\end{equation*}%
Evidently, for $t\in T$, we have $(0,t]\subset T,$ and so $T$ is an
interval. When $(0,t_{1})\subset T,$ as in Lemma \ref{lift-3}, the subarc $%
\beta ^{\prime }([0,t_{1}])$ of $\beta ^{\prime }$ also has exactly $d$
lifts $\alpha _{1,t_{1}},\cdots ,\alpha _{d,t_{1}}$ from $p_{0},$ such that $%
\alpha _{1,t_{1}}^{\circ },\cdots ,\alpha _{d,t_{1}}^{\circ }\subset
U\backslash C(\Sigma ),$ which implies $(0,t_{1}]\subset T$.

\ Thus, we have $T=(0,t_{2}]$ with $0<t_{2}\leq 1.$ Then the subarc $\beta
=\beta ^{\prime }([0,t_{2}])$ of $\beta ^{\prime }$ has exactly $d$ lifts $%
\alpha _{1,t_{2}},\cdots ,\alpha _{d,t_{2}}$ from $p_{0},$ such that $\alpha
_{1,t_{2}}^{\circ },\cdots ,\alpha _{d,t_{2}}^{\circ }\subset U\backslash
C(\Sigma ).$ The maximality of $t_{2}$ in $T$ implies either $t_{2}=1,$ or
for some $j=1,2,\cdots ,d,$ $p_{j}\in \partial U.$
\end{proof}

\begin{lemma}
\label{lifts-2} Let $\Sigma =(f,\overline{U})\in \mathbf{F},$ such that $%
C(\Sigma )\subset \partial U\cup f^{-1}(E_{q})$. Let $p_{0}\overset{\alpha
_{1}^{\prime }}{\rightarrow }p_{1}^{\prime }$ be a subarc of $\partial U,$
such that $\alpha _{1}^{\prime \circ }\cap (f^{-1}(E_{q})\cup C(\Sigma
))=\emptyset .$ Assume $p_{0}$ is not a folded point of $\Sigma ,$ and $%
\beta ^{\prime }=f(\alpha _{1}^{\prime })$ is a simple subarc of $\partial
\Sigma .$ Then there exists a subarc $f(p_{0})\overset{\beta }{\rightarrow }%
q_{1}$ of $\beta ^{\prime },$ such that the following hold.

(1) $\beta $ has exactly $d$ lifts $p_{0}\overset{\alpha _{1}}{\rightarrow }%
p_{1},\cdots ,$ $p_{0}\overset{\alpha _{d}}{\rightarrow }p_{d}$ from $p_{0},$
and $d=v_{f}(p_{0}).$

(2) $\alpha _{1}$ is a subarc of $\alpha _{1}^{\prime },$ and $\alpha
_{2}^{\circ },\cdots ,\alpha _{d}^{\circ }\subset U\backslash C(f).$

(3) Either $\beta =\beta ^{\prime },$ or for some $j=2,3,\cdots ,d,$ $%
p_{j}\in \partial U.$
\end{lemma}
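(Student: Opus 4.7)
The plan is to run the same continuity (open--closed) argument as in Lemma \ref{lift-3} and Lemma \ref{lifts-1}, now initialized at a non-folded boundary point, with the hypothesis $C(\Sigma)\subset\partial U\cup f^{-1}(E_q)$ guaranteeing that interior lifts never approach branch points away from $E_q$.

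First I would normalize the starting picture at $p_0$ by Lemma \ref{cov-1}(iii). Since $p_0$ is not folded, the local exponent is an odd integer $2d-1$ with $d=v_f(p_0)$; after the homeomorphisms we may take $p_0=0$, $U$ locally equal to $\Delta^{+}$, $\partial U$ locally equal to $[-1,1]$, and $f(z)=z^{2d-1}$ on $\overline{\Delta^{+}}$. Assuming (WLOG) $\alpha_1^{\prime}$ starts along $[0,\epsilon]$, the preimages in $\overline{\Delta^{+}}$ of a short initial segment of $\beta^{\prime}$ are exactly $d$ radial rays at arguments $2\pi k/(2d-1)$ for $k=0,\dots,d-1$: the $k=0$ ray lies on $\alpha_1^{\prime}$, and the remaining $d-1$ rays lie in $\Delta^{+}$. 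So a small initial subarc of $\beta^{\prime}$ admits $d$ lifts satisfying conclusions (1) and (2).

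Next I parametrize $\beta^{\prime}$ by $t\in[0,1]$ and let $T$ be the set of $t\in(0,1]$ for which $\beta^{\prime}([0,t])$ admits exactly $d$ lifts $\alpha_{1,t},\dots,\alpha_{d,t}$ from $p_0$ with $\alpha_{1,t}\subset\alpha_1^{\prime}$ and $\alpha_{j,t}^{\circ}\subset U\backslash C(f)$ for $j\geq 2$. For relative openness, given $t\in T$ with $t<1$, the endpoint $p_{1,t}\in\alpha_1^{\prime\circ}$ avoids $C(\Sigma)\cup f^{-1}(E_q)$ by hypothesis and is not folded, so Lemma \ref{cov-1}(iii) applies with exponent $1$ and extends $\alpha_{1,t}$ slightly along $\alpha_1^{\prime}$; for $j\geq 2$, if $p_{j,t}\in U\backslash C(f)$ then $f$ is a local homeomorphism at $p_{j,t}$ and $\alpha_{j,t}$ extends. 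Thus the only obstructions to openness are $t=1$ or some $p_{j,t}\in\partial U$ with $j\geq 2$. For relative closedness, if $(0,t_0)\subset T$ then monotone extension gives limits $z_{j,t_0}\in\overline{U}$ with $f(z_{j,t_0})=\beta^{\prime}(t_0)$; simplicity of $\beta^{\prime}$ forces $z_{1,t_0}=\alpha_1^{\prime}(t_0)$, and for $j\geq 2$ with $z_{j,t_0}\in U$ the fact that $t_0<1$ puts $\beta^{\prime}(t_0)\in f(\alpha_1^{\prime\circ})$, whence $\beta^{\prime}(t_0)\notin E_q$, so the hypothesis $C(\Sigma)\subset\partial U\cup f^{-1}(E_q)$ rules out $z_{j,t_0}\in C(f)$ and the lift continues past $t_0$. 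The process therefore terminates exactly at $t_0=1$ (yielding $\beta=\beta^{\prime}$) or when some $z_{j,t_0}\in\partial U$ for $j\geq 2$ (yielding conclusion (3)).

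The main technical obstacle is this branch-point exclusion at the interior limit step: without the hypothesis $C(\Sigma)\subset\partial U\cup f^{-1}(E_q)$, an interior lift could converge to a non-special interior branch point, where the local lift count changes and uniqueness of continuation fails. The hypothesis is used precisely to couple the avoidance of $E_q$ by $\beta^{\prime\circ}$ with the absence of interior branch points along the $\alpha_{j,t}$, thereby preserving $\alpha_{j,t}^{\circ}\subset U\backslash C(f)$ throughout the continuation.
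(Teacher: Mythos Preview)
Your proposal is correct and follows essentially the same approach as the paper: define the maximal interval $T$ on which the $d$ lifts exist with the required properties, use the closedness argument from Lemma~\ref{lift-3} to reach $T=(0,t_2]$, and then invoke the hypothesis $C(\Sigma)\subset\partial U\cup f^{-1}(E_q)$ together with $\beta^{\prime\circ}\cap E_q=\emptyset$ to rule out interior branch points as terminal obstacles. The only organizational difference is that the paper first records the crude alternative ``$\beta=\beta'$ or some $p_j\in\partial U\cup C(\Sigma)$'' and then eliminates the $C(\Sigma)\setminus\partial U$ case in a separate final paragraph, whereas you fold that exclusion directly into the open--closed continuation step.
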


\begin{proof}
\ \ As the case in Lemma \ref{lift-3}, let $d=v_{f}(p_{0}),$ and let $\beta
^{\prime }$ be parametrized by $t\in \lbrack 0,1].$ The initial subarc of $%
\beta ^{\prime }$ has exactly $d$ lifts from $p_{0},$ and exactly one lift
is a subarc of $\alpha _{1}^{\prime }.$ We define%
\begin{equation*}
T\overset{def}{=}\left\{ t\in (0,1]:%
\begin{array}{c}
\text{the subarc }\beta ^{\prime }([0,t])\text{ of }\beta ^{\prime }\text{
has exactly }d\text{ lifts }\alpha _{1,t},\cdots ,\alpha _{d,t} \\ 
\text{from }p_{0},\text{ such that }\alpha _{1,t}\subset \alpha _{1}^{\prime
}\text{ and }\alpha _{2,t}^{\circ },\cdots ,\alpha _{d,t}^{\circ }\subset
U\backslash C(\Sigma ).%
\end{array}%
\right\}
\end{equation*}%
Evidently, $T$ is an interval. When $(0,t_{1})\subset T,$ as in Lemma \ref%
{lift-3}, the subarc $\beta ^{\prime }([0,t_{1}])$ of $\beta ^{\prime }$
also has exactly $d$ lifts $\alpha _{1,t_{1}},\cdots ,\alpha _{d,t_{1}}$
from $p_{0},$ such that $\alpha _{1,t_{1}}\subset \alpha _{1}^{\prime },$
and $\alpha _{2,t_{1}}^{\circ },\cdots ,\alpha _{d,t_{1}}^{\circ }\subset
U\backslash C(\Sigma ),$ which implies $t_{1}\in T.$ Thus, $T=(0,t_{2}]$
with $0<t_{2}\leq 1,$ and then the subarc $\beta =\beta ^{\prime
}([0,t_{2}]) $ of $\beta ^{\prime }$ has exactly $d$ lifts $\alpha
_{1,t_{2}},\cdots ,\alpha _{d,t_{2}}$ from $p_{0},$ such that $\alpha
_{1,t_{2}}$ is a subarc of $\alpha _{1}^{\prime },$ and $\alpha
_{2,t_{2}}^{\circ },\cdots ,\alpha _{d,t_{2}}^{\circ }\subset U\backslash
C(\Sigma ).$ The maximality of $t_{2}$ in $T$ implies either $t_{2}=1$ (or
equivalently, $\beta =\beta ^{\prime }$), or for some $j=2,3,\cdots ,d,$ $%
p_{j}\in \partial U\cup C(\Sigma ).$

\ \ Throughout we assume $2\leq j\leq d.$ To prove (3), we claim $p_{j}\in
C(\Sigma )$ implies $p_{j}\in \partial U$ or $\beta =\beta ^{\prime }.$
Suppose $p_{j}\in C(\Sigma )\backslash \partial U\subset f^{-1}(E_{q}),$ and
then we have $f(p_{j})\in E_{q}\cap \beta .$ Since $\alpha _{1}^{\prime
\circ }\cap f^{-1}(E_{q})=\emptyset $, we have $\beta ^{\prime \circ }\cap
E_{q}=\emptyset $, and thus $\beta =\beta ^{\prime }$ must hold, and our
claim follows.
\end{proof}

In these three lemmas, since $\beta $ is simple, all lifts $\alpha
_{1},\cdots ,\alpha _{d}$ of $\beta $ are simple. We also claim that for
each pair $(i,j)$ with $i\neq j,$ $\alpha _{i}^{\circ }\cap \alpha
_{j}=\emptyset =\alpha _{j}^{\circ }\cap \alpha _{i}.$ Otherwise, we assume $%
z\in \alpha _{i}^{\circ }\cap \alpha _{j},$ and then $\beta $ has at least
two distinct lifts $\alpha _{i}$ and $\alpha _{j}$ through $z.$ Hence, $z\in
C(\Sigma )\cap \alpha _{i}^{\circ }$ and $f(z)\in CV(\Sigma )\cap \beta
^{\circ },$ which is a contradiction to the condition $CV(\Sigma )\cap \beta
^{\circ }=\emptyset $ in Lemma \ref{lifts-1} and Lemma \ref{lift-3}, or to
the fact $C(\Sigma )\cap \alpha _{i}^{\circ }=\emptyset $ in Lemma \ref%
{lifts-2}.

We may assume $\alpha _{1},\cdots ,\alpha _{d}$ are arranged in the
anti-clockwise order at $p_{0}.$ Evidently, for $j=1,2,\cdots ,d-1,$ $f$
maps the angle between $\alpha _{j}$ and $\alpha _{j+1}$ at $p_{0}$ to a
perigon at $f(p_{0}).$ In Lemma \ref{lifts-1}, $f$ also maps the angle
between $\alpha _{d}$ and $\alpha _{1}$ at $p_{0}$ to a perigon at $f(p_{0})$%
. In Lemma \ref{lifts-2}, each point $w\in S\backslash \beta $ near $%
f(p_{0}) $ has at most $v_{f}(p_{0})$ preimages near $p_{0},$ and for $%
j=1,2,\cdots ,d-1,$ $w$ has exactly one preimage between $\alpha _{j}$ and $%
\alpha _{j+1}. $ So there is at most one preimage of $w$ near $p_{0}$
between $\alpha _{d}$ and $\partial U\backslash \alpha _{1}.$ Similarly in
Lemma \ref{lift-3}, there is at most one preimage of $w$ near $p_{0},$
either between $\partial U $ and $\alpha _{1},$ or between $\alpha _{d}$ and 
$\partial U.$

\section{Operations to modify a surface}

In this\ section, we prove some results to modify a surface.

\subsection{ Cutting and sewing a surface}

In this subsection, two basic operations are introduced to deform $\partial
\Sigma $, namely cutting $\Sigma ,$ and sewing $\Sigma $ along two subarcs
of $\partial \Sigma $. These two operations are opposite to each other
somehow.

\begin{lemma}
\label{cut} Let $\Sigma =(f,\overline{U})\in \mathbf{F},$ and $a\in \partial
U.$ Assume $a\overset{\beta }{\rightarrow }b$ is a simple arc in $U\cup
\{a\},$ such that $\beta ^{\circ }\cap C(\Sigma )=\emptyset .$ In addition, $%
(f,\beta )$ is a simple piecewise analytic arc, such that $f(\beta ^{\circ
})\cap E_{q}=\emptyset $. Then there exists $\Sigma _{1}=(f_{1},\overline{%
\Delta })=(f\circ \varphi ,\overline{\Delta })\in \mathbf{F},$ such that the
following hold.

(1) $\varphi \in OPL(\overline{\Delta })$ maps $\overline{\Delta }\backslash
(-i\overset{\partial \Delta }{\rightarrow }i)$ homeomorphically onto $%
\overline{U}\backslash \beta .$

(2) $\varphi |_{-i\overset{\partial \Delta }{\rightarrow }1}\in Homeo^{+}(-i%
\overset{\partial \Delta }{\rightarrow }1,\beta ),$ and $\varphi |_{1\overset%
{\partial \Delta }{\rightarrow }i}\in Homeo^{+}(1\overset{\partial \Delta }{%
\rightarrow }i,-\beta ).$

(3) For each $w\in S\backslash f(\beta ),$ $n(\Sigma _{1},w)=n(\Sigma ,w),$
and then $A(\Sigma _{1})=A(\Sigma ).$

(4) $\partial \Sigma _{1}=\partial \Sigma +(f,\beta )+(f,-\beta )$ is a
partition of $\partial \Sigma _{1}$, and then $L(\partial \Sigma
_{1})=L(\partial \Sigma )+2L(f,\beta ).$

(5) For each $a_{j}\in E_{q}\backslash \{f(b)\},$ $\overline{n}(\Sigma
_{1},a_{j})=\overline{n}(\Sigma ,a_{j}),$ and $\overline{n}(\Sigma
_{1},f(b))=\overline{n}(\Sigma ,f(b))-1.$
\end{lemma}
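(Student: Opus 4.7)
The plan is to construct $\varphi$ explicitly by cutting $\overline{U}$ along $\beta$, using the squaring map $z\mapsto z^{2}$ as the branched double cover model. First, invoking a Sch\"onflies-type argument (every simple arc in a closed Jordan domain, with one endpoint on the boundary and interior in the open domain, is ambient-homeomorphic to $[0,1]\subset\overline{\Delta}$), fix a homeomorphism $\phi\in Homeo^{+}(\overline{\Delta},\overline{U})$ with $\phi(1)=a$, $\phi(0)=b$, and $\phi([0,1])=\beta$. Next, fix $h\in Homeo^{+}(\overline{\Delta},\overline{\Delta^{+}})$ sending $-i\mapsto -1$, $1\mapsto 0$, $i\mapsto 1$, mapping the arc $-i\overset{\partial\Delta}{\rightarrow}1\overset{\partial\Delta}{\rightarrow}i$ onto the diameter $[-1,1]$ and the arc $i\overset{\partial\Delta}{\rightarrow}-1\overset{\partial\Delta}{\rightarrow}-i$ onto the upper unit semicircle. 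With $q(z)=z^{2}$, define $\varphi=\phi\circ q\circ h$ and $f_{1}=f\circ\varphi$.

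Items (1) and (2) are read off directly from the action of $q$ on $\overline{\Delta^{+}}$: it is a bijection from $\Delta^{+}$ onto $\Delta\setminus[0,1)$, is $2$-to-$1$ from $(-1,1)\setminus\{0\}$ onto $(0,1]$, and is branched only at $0$. Tracking orientations, $\varphi$ restricted to $-i\overset{\partial\Delta}{\rightarrow}1$ traces $\beta$ and on $1\overset{\partial\Delta}{\rightarrow}i$ traces $-\beta$, while $\varphi|_{\overline{\Delta}\setminus(-i\overset{\partial\Delta}{\rightarrow}i)}$ is a homeomorphism onto $\overline{U}\setminus\beta$. To obtain $\varphi\in OPL(\overline{\Delta})$, extend $\phi$ and $h$ to homeomorphisms of small open neighborhoods of $\overline{\Delta}$ (Sch\"onflies) and use that $q$ is already defined and open on all of $\mathbb{C}$; the extended $\varphi$ is then continuous, open, discrete (at most $2$-to-$1$), and orientation-preserving on a neighborhood of $\overline{\Delta}$. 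Since $f$ extends to an OPL on a neighborhood of $\overline{U}$, the composition $f_{1}=f\circ\varphi$ lies in $OPL(\overline{\Delta})$. The boundary $(f_{1},\partial\Delta)$ decomposes as $(f,\partial U)+(f,\beta)+(f,-\beta)$ by the image descriptions above, and each summand is piecewise analytic by hypothesis, so $\Sigma_{1}\in\mathbf{F}$ and (4) follows immediately.

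For (3), since $\varphi|_{\overline{\Delta}\setminus(-i\overset{\partial\Delta}{\rightarrow}i)}$ is a homeomorphism onto $\overline{U}\setminus\beta$, every $w\in S\setminus f(\beta)$ has its preimages in $\overline{U}$ lifted bijectively under $\varphi^{-1}$ to preimages in $\overline{\Delta}$, with preserved local multiplicities and preserved interior/boundary split. Hence $n(\Sigma_{1},w)=n(\Sigma,w)$ off the measure-zero set $f(\beta)$, and integrating over $S$ gives $A(\Sigma_{1})=A(\Sigma)$. For (5), preimages of $a_{j}\in E_{q}$ in $\Delta$ correspond bijectively to $\varphi$-lifts of points in $f^{-1}(a_{j})\cap(U\setminus\beta)$. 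The hypothesis $f(\beta^{\circ})\cap E_{q}=\emptyset$, combined with $a\in\partial U$, forces $f^{-1}(a_{j})\cap\beta\subset\{b\}$, yielding equality for $a_{j}\neq f(b)$ and a deficit of exactly one (the point $b$, whose unique $\varphi$-preimage $1$ lies in $\partial\Delta$) when $a_{j}=f(b)$.

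The main obstacle is the verification that $\varphi$ is genuinely OPL across the slit arc $-i\overset{\partial\Delta}{\rightarrow}i$, where $\varphi$ is not locally injective and the point $1$ behaves as a folded point. Openness and discreteness cannot be checked by staying inside $\overline{\Delta}$, so the argument must pass to an open extension, where the symmetric model $q(z)=z^{2}$ supplies the required $2$-to-$1$ branched cover structure. Once this extension is in place, the remaining items reduce to routine bookkeeping of preimages and orientations.
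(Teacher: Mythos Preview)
Your proof is correct and follows essentially the same approach as the paper. The paper abbreviates the construction of $\varphi$ to the phrase ``by topology,'' whereas you spell it out via the squaring model $\varphi=\phi\circ q\circ h$; this explicit $z^{2}$ construction is precisely what the paper itself uses when proving the inverse operation in Lemma~\ref{glue}, so the two arguments are really two halves of the same picture. One small imprecision: the containment $f^{-1}(a_{j})\cap\beta\subset\{b\}$ can fail when $f(a)=a_{j}$, but since $a\in\partial U$ (and its $\varphi$-preimages $\pm i$ lie on $\partial\Delta$) this does not affect the count $\overline{n}$, and your conclusion for (5) stands.
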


\begin{proof}
\ \ By topology, there exists $\varphi \in OPL(\overline{\Delta }),$ such
that $\varphi $ maps $\overline{\Delta }\backslash (-i\overset{\partial
\Delta }{\rightarrow }i)$ homeomorphically onto $\overline{U}\backslash
\beta ,$ $\varphi |_{-i\overset{\partial \Delta }{\rightarrow }1}\in
Homeo^{+}(-i\overset{\partial \Delta }{\rightarrow }1,\beta ),$ and $\varphi
|_{1\overset{\partial \Delta }{\rightarrow }i}\in Homeo^{+}(1\overset{%
\partial \Delta }{\rightarrow }i,-\beta ).$ Then $\Sigma _{1}=(f\circ
\varphi ,\overline{\Delta })$ is a surface in $\mathbf{F},$ because $f\circ
\varphi \in OPL(\overline{\Delta }),$ and $\partial \Sigma _{1}$ is
piecewise analytic, since 
\begin{eqnarray*}
\partial \Sigma _{1} &=&(f\circ \varphi ,\partial \Delta )=(f,(\varphi
,\partial \Delta )) \\
&=&(f,\partial U+\beta +(-\beta ))=\partial \Sigma +(f,\beta )+(f,-\beta ).
\end{eqnarray*}%
Thus, $L(\partial \Sigma _{1})=L(\partial \Sigma )+2L(f,\beta ),$ and 
\begin{equation*}
A(\Sigma _{1})=A(f,\varphi (\Delta ))=A(f,U\backslash \beta
)=A(f,U)=A(\Sigma ).
\end{equation*}

\ \ For each $w\in S\backslash f(\beta ),$ $\varphi $ is a bijection from $%
f_{1}^{-1}(w)\cap \Delta $ onto $f^{-1}(w)\cap U,$ and then $\overline{n}%
(\Sigma _{1},w)=\overline{n}(\Sigma ,w).$ Especially, for each $a_{j}\in
E_{q}\backslash \{f(b)\},$ since either $a_{j}\notin f(\beta )$ or $%
a_{j}=f(a),$ we have $\overline{n}(\Sigma _{1},a_{j})=\overline{n}(\Sigma
,a_{j}).$ Since $\varphi ^{-1}(b)=1$ is on $\partial \Delta ,$ $\varphi $
maps $[f_{1}^{-1}(f(b))\cap \Delta ]\cup \{1\}$ bijectively onto $%
f^{-1}(f(b))\cap U,$ and hence $\overline{n}(\Sigma _{1},f(b))=\overline{n}%
(\Sigma ,f(b))-1.$
\end{proof}

Cutting a simply-connected surface inside leads to a doubly-connected
surface, and the proof of the following corollary about this fact is similar
and omitted.

\begin{corollary}
\label{cutin,C} For $\Sigma =(f,\overline{U})\in \mathbf{F},$ let $b_{0}%
\overset{\beta }{\rightarrow }b_{1}$ be a simple arc in $U,$ such that $%
\beta ^{\circ }\cap C(\Sigma )=\emptyset ,$ and $(f,\beta )$ is a simple
piecewise analytic arc. Then there exists a doubly-connected surface $\Sigma
_{1}=(f_{1},\overline{A})=(f\circ \varphi ,\overline{A}),$ such that the
following hold.

(1) $\partial A$ is the disjoint union of two Jordan curves $\partial _{in}A$
and $\partial _{ex}A$.

(2) $\varphi \in OPL(\overline{A})$ maps $\overline{A}\backslash \partial
_{in}A$ homeomorphically onto $\overline{U}\backslash \beta .$ In addition, $%
\partial _{in}A=\alpha _{1}+\alpha _{2},$ such that $\varphi |_{\alpha
_{1}}\in Homeo^{+}(\alpha _{1},\beta )$ and $\varphi |_{\alpha _{2}}\in
Homeo^{+}(\alpha _{2},-\beta ).$ Then up to a reparametrization, $%
(f_{1},\partial _{ex}A)=(f,\partial U)=\partial \Sigma .$ (See Definition %
\ref{isomorphic}.)

(3) For each $w\in S\backslash f(\beta ),$ $n(f_{1},w)=n(f,w),$ and then $%
A(\Sigma _{1})=A(\Sigma ).$

(4) For each $a\in E_{q},$ 
\begin{equation*}
\overline{n}(\Sigma _{1},a)=\overline{n}(\Sigma ,a)-\#[\beta \cap f^{-1}(a)].
\end{equation*}
\end{corollary}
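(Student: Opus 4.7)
The plan is to mimic the construction of Lemma \ref{cut}, the difference being that now $\beta$ lies entirely in $U$, so cutting $\overline{U}$ along $\beta$ topologically produces an annulus rather than a disk. First, I would fix a model annulus $\overline{A}=\{z\in\mathbb{C}:1\leq|z|\leq 2\}$ with outer boundary $\partial_{ex}A=\{|z|=2\}$ and inner boundary $\partial_{in}A=\{|z|=1\}$, each oriented so that $A$ lies on the left. Choose two distinct points $p_{0},p_{1}\in\partial_{in}A$ that split $\partial_{in}A$ into two subarcs $p_{0}\overset{\alpha_{1}}{\rightarrow}p_{1}$ and $p_{1}\overset{\alpha_{2}}{\rightarrow}p_{0}$.

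Next, since $\beta$ is a simple compact arc in the Jordan domain $U$ with both endpoints interior to $U$, the space obtained from $\overline{U}$ by ``doubling'' $\beta$ (declaring the two sides of the slit to be distinct points, except at $b_{0}$ and $b_{1}$) is a compact surface with two boundary components: the outer one is $\partial U$, and the inner one consists of two copies of $\beta$ joined at $b_{0}$ and $b_{1}$. By a standard topological argument, there exists a continuous surjection $\varphi:\overline{A}\to\overline{U}$ such that $\varphi|_{A}$ is an orientation-preserving homeomorphism onto $U\setminus\beta$, $\varphi|_{\partial_{ex}A}\in Homeo^{+}(\partial_{ex}A,\partial U)$, $\varphi|_{\alpha_{1}}\in Homeo^{+}(\alpha_{1},\beta)$ with $\varphi(p_{0})=b_{0}$ and $\varphi(p_{1})=b_{1}$, and $\varphi|_{\alpha_{2}}\in Homeo^{+}(\alpha_{2},-\beta)$. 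I would then verify $\varphi\in OPL(\overline{A})$: continuity and orientation-preservation are immediate from the definition; openness is clear on $A$, and at points of $\alpha_{1}^{\circ}\cup\alpha_{2}^{\circ}$ the map $\varphi$ is a $2$-to-$1$ local quotient that glues the two sides of $\beta$ to a neighbourhood in $\overline{U}$; at the two corner points $p_{0},p_{1}$, a small neighbourhood in $\overline{A}$ still maps onto a full neighbourhood of $b_{j}$ in $\overline{U}$, since a small disk around $b_{j}$ minus $\beta$ consists of two ``wedges'' that are identified by $\varphi$ with the two sides of $p_{j}$ on $\partial_{in}A$; discreteness holds because $\#\varphi^{-1}(w)\leq 2$ for every $w$. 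Setting $f_{1}:=f\circ\varphi$, we then have $f_{1}\in OPL(\overline{A})$, and piecewise analyticity of
\begin{equation*}
(f_{1},\partial A)=(f,\partial U)+(f,\beta)+(f,-\beta)
\end{equation*}
follows from the hypothesis on $\beta$ together with $\Sigma\in\mathbf{F}$, so $\Sigma_{1}=(f_{1},\overline{A})$ is a doubly-connected surface satisfying (1) and (2) by construction.

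Finally, (3) and (4) are bookkeeping consequences of the fact that $\varphi|_{A}:A\to U\setminus\beta$ is a homeomorphism. For (3), given $w\in S\setminus f(\beta)$ we have $f^{-1}(w)\cap U=f^{-1}(w)\cap(U\setminus\beta)$, and $\varphi$ restricts to a bijection $f_{1}^{-1}(w)\cap A\to f^{-1}(w)\cap(U\setminus\beta)$ preserving local multiplicities, hence $n(f_{1},w)=n(f,w)$ and integrating yields $A(\Sigma_{1})=A(\Sigma)$. For (4), the same bijection applied to $a\in E_{q}$ gives $\overline{n}(\Sigma_{1},a)=\#(f^{-1}(a)\cap(U\setminus\beta))=\overline{n}(\Sigma,a)-\#(\beta\cap f^{-1}(a))$. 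The main obstacle, exactly as in Lemma \ref{cut}, is constructing $\varphi$ and checking $\varphi\in OPL(\overline{A})$ at the corner points $p_{0},p_{1}$, where $\varphi$ is not a local homeomorphism; once openness there is justified by the ``two wedges'' picture above, all remaining properties follow routinely.
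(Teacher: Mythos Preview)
Your proposal is correct and follows exactly the approach the paper intends: the paper omits the proof entirely, stating only that it is ``similar'' to Lemma~\ref{cut}, and your argument---construct a slit map $\varphi$ from a model annulus onto $\overline{U}$ collapsing $\partial_{in}A$ onto $\beta$, then read off (1)--(4) from the bijection $A\to U\setminus\beta$---is precisely that analogy carried out. The only point worth tightening is the verification that $\varphi\in OPL(\overline{A})$ in the sense of Definition~\ref{light}, which formally requires an extension to an open neighbourhood of $\overline{A}$; this is routine (e.g.\ extend the slit map slightly past $\partial_{ex}A$), and the paper's own proof of Lemma~\ref{cut} is no more explicit on this point.
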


The following lemma is the reversed process of Lemma \ref{cut}, and the
roles of $\Sigma $ and $\Sigma _{1}$ in these two lemmas are interchanged.

\begin{lemma}
\label{glue} For $\Sigma =(f,\overline{U})\in \mathbf{F}$, let $a\overset{%
\alpha }{\rightarrow }p$ and $p\overset{\beta }{\rightarrow }b$ be two
adjacent subarcs of $\partial U.$ Assume there exists $h\in
Homeo^{+}(-\alpha ,\beta ),$ such that $f|_{\beta }\circ h=f|_{\alpha }.$

(A) If $\alpha \cup \beta \subsetneqq \partial U$, then there exists $\Sigma
_{1}=(f_{1},\overline{\Delta })=\left( f\circ \psi ^{-1},\overline{\Delta }%
\right) \in \mathbf{F},$ such that the following hold.

(1) $\psi \in OPL(\overline{U})$ maps $\overline{U}\backslash (\alpha \cup
\beta )$ homeomorphically onto $\overline{\Delta }\backslash \lbrack 0,1],$
and $(\psi ,\beta )=[0,1]=(\psi ,-\alpha )$ up to a reparametrization. (See
Definition \ref{isomorphic} and Notation \ref{interval}.)

(2) For each $z\in \alpha ,$ $\psi (z)=\psi (h(z)),$ and so $f\circ \psi
^{-1}\in OPL(\overline{\Delta })$ is well-defined.

(3) For each $w\in S\backslash f(\alpha ),$ $n(\Sigma _{1},w)=n(\Sigma ,w),$
and then $A(\Sigma _{1})=A(\Sigma )$.

(4) $\partial \Sigma =\partial \Sigma _{1}+(f,\alpha )+(f,\beta )$ is a
partition of $\partial \Sigma ,$ and then $L(\partial \Sigma
_{1})=L(\partial \Sigma )-2L(f,\alpha ).$

(5) For each $a_{j}\in E_{q},$ $\overline{n}(\Sigma _{1},a_{j})=\overline{n}%
(\Sigma ,a_{j})+\#(\beta \backslash \{b\})\cap f^{-1}(a_{j}).$

(B) If $\partial U=\alpha +\beta ,$ then there exists a closed surface $%
\Sigma _{1}=\left( f_{1},S\right) ,$ such that $A(\Sigma _{1})=A(\Sigma ),$
and%
\begin{equation*}
\overline{n}(f_{1},E_{q})=\overline{n}(f,E_{q})+\#\{\alpha \cap
f^{-1}(E_{q})\}\geq \overline{n}(f,E_{q}).
\end{equation*}
\end{lemma}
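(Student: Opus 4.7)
The plan is to realise $\Sigma_{1}$ as the quotient of $\Sigma$ by the identification on $\partial U$ prescribed by $h$, and then verify the listed properties by direct bookkeeping. On $\overline{U}$ I introduce the equivalence relation generated by $z\sim h(z)$ for $z\in\alpha$, and let $\pi:\overline{U}\to X:=\overline{U}/\!\sim$ be the quotient map. Because $h\in Homeo^{+}(-\alpha,\beta)$ fixes $p$ and pairs $a$ with $b$ while reversing the orientation induced on $\partial U$ by $\alpha$ and $\beta$, the quotient $X$ is in case (A) homeomorphic to $\overline{\Delta}$ (the classical fold that identifies two adjacent boundary arcs of a disc) and in case (B) homeomorphic to $S$ (folding the whole boundary circle in half). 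I then fix a homeomorphism $\eta$ of $X$ onto $\overline{\Delta}$ (resp.\ $S$) sending $\pi(\alpha\cup\beta)$ to $[0,1]$ (resp.\ to some simple arc), with $\pi(p)\mapsto 0$ and $\pi(a)=\pi(b)\mapsto 1$, and set $\psi:=\eta\circ\pi$.

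The second task is to show $\psi\in OPL(\overline{U})$ and that $f_{1}:=f\circ\psi^{-1}$ is well defined and OPL. The hypothesis $f|_{\beta}\circ h=f|_{\alpha}$ says $f$ is constant on $\sim$-classes, so $f$ descends to a continuous map with $f=f_{1}\circ\psi$. For $\psi$ itself, lightness and openness are local questions: at an interior point of $\alpha^{\circ}$ paired with a point of $\beta^{\circ}$, after using Lemma \ref{cov-1} to straighten $\partial U$, the quotient glues two half-discs across their common straight edge into a full disc, which is visibly OPL; at the folding vertex $p$ the same straightening reduces the identification to the literal folding of two half-discs along their boundary radii into one disc; elsewhere $\psi$ is already a local homeomorphism. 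Then $f_{1}\in OPL$ follows from $f=f_{1}\circ\psi$ together with the OPL properties of $f$ and $\psi$, and piecewise analyticity of $\partial\Sigma_{1}$ is inherited from $\partial\Sigma$, so $\Sigma_{1}\in\mathbf{F}$ in case (A) and $\Sigma_{1}$ is a closed surface in case (B).

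Finally I read off the quantitative claims by counting preimages. In case (A), $\psi$ is a homeomorphism from $\overline{U}\setminus(\alpha\cup\beta)$ onto $\overline{\Delta}\setminus[0,1]$, in particular sending $\partial U\setminus(\alpha\cup\beta)$ homeomorphically onto $\partial\Delta$ with $a$ and $b$ identified at $1$. This immediately gives the partition $\partial\Sigma=\partial\Sigma_{1}+(f,\alpha)+(f,\beta)$, hence $L(\partial\Sigma_{1})=L(\partial\Sigma)-2L(f,\alpha)$ after using $L(f,\alpha)=L(f,\beta)$ from the gluing identity, while $\alpha\cup\beta$ has measure zero so $A(\Sigma_{1})=A(\Sigma)$ and $n(\Sigma_{1},w)=n(\Sigma,w)$ for $w\notin f(\alpha)$. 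For $a_{j}\in E_{q}$, I decompose $\Delta$ according to $\psi$: points of $\Delta\setminus[0,1]$ contribute $\overline{n}(\Sigma,a_{j})$, the point $0=\psi(p)$ contributes $1$ iff $f(p)=a_{j}$, and each pair $\{z,h(z)\}\subset\alpha^{\circ}\cup\beta^{\circ}$ with $f(z)=a_{j}$ contributes once to $(0,1)$, yielding $\overline{n}(\Sigma_{1},a_{j})=\overline{n}(\Sigma,a_{j})+\#((\beta\setminus\{b\})\cap f^{-1}(a_{j}))$. The analogous bookkeeping in case (B), where all of $\partial U$ collapses and $\overline{n}$ counts preimages in $\overline{U}=S$ itself, gives the remaining formulae. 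I expect the main obstacle to be the OPL verification at the folding vertex $p$, since the quotient there forces two half-discs in $\overline{U}$ to be stitched across their diameters; once the local straightening provided by Lemma \ref{cov-1} is in hand, the rest of the proof is routine accounting.
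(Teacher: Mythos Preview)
Your approach is correct and is essentially the same construction as the paper's, phrased abstractly rather than concretely. The paper first normalises (citing an external lemma) so that $\overline{U}=\overline{\Delta^{+}}$, $\alpha=[-1,0]$, $\beta=[0,1]$, $h(z)=-z$, and then takes $\psi(z)=z^{2}$ explicitly; your quotient $\overline{U}/\!\sim$ is exactly what this $z^{2}$ realises, and indeed the paper's Remark~\ref{model} says outright that the concrete $\Sigma_{1}$ is ``just a concrete model'' of the abstract quotient surface you build. Two small points: your appeal to Lemma~\ref{cov-1} to ``straighten $\partial U$'' is slightly misplaced, since that lemma describes the local structure of $f$ rather than of $\partial U$ itself---the Schoenflies theorem (or simply passing to an isomorphic surface over $\overline{\Delta^{+}}$) is the right tool here; and the paper's explicit $z^{2}$ model makes the verification that $\psi\in OPL(\overline{U})$ and $f_{1}\in OPL(\overline{\Delta})$ immediate, whereas in your abstract formulation the OPL extension across $[0,1]$ (not just at the vertex $p$) deserves a word, since one must check that the two half-disc pieces of $f_{1}$ patch to an open, discrete map rather than merely a continuous one.
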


\begin{proof}
\ \ When $\alpha \cup \beta \subsetneqq \partial U,$ we could replace $%
\Sigma $ by its isomorphic surface, and assume $\overline{U}=\overline{%
\Delta ^{+}},$ $\alpha =[-1,0],$ $\beta =[0,1],$ and for each $z\in \lbrack
-1,0],$ $h(z)=-z,$ as in Lemma 2.7\textbf{\ }in\textbf{\ \cite{Z2}}. Now $%
\psi (z)=z^{2}\in OPL(\overline{\Delta ^{+}})$ maps $\overline{\Delta ^{+}}%
\backslash \lbrack -1,1]$ homeomorphically onto $\overline{\Delta }%
\backslash \lbrack 0,1].$ The mapping 
\begin{equation*}
f_{1}(z)=f\circ \psi ^{-1}(z)=f(\sqrt{z})\in OPL(\overline{\Delta })
\end{equation*}%
is well-defined, independent of the choices of $\sqrt{z}$ for $z\in (0,1].$
In fact, $\Sigma _{1}=(f_{1},\overline{\Delta })\in \mathbf{F}$ is the
desired surface, and the conditions (1), (2), (3), (4), and (5) about $%
\Sigma _{1}$ could be verified directly.

\ \ When $\partial U=\alpha +\beta ,$ we could replace $\Sigma $ by its
isomorphic surface, and assume $\overline{U}$ is the closed upper plane $%
\overline{\mathbb{H}},$ $\alpha =\overline{%
\mathbb{R}
^{-}},$ $\beta =\overline{%
\mathbb{R}
^{+}},$ and $h(z)=-z$ on $\overline{%
\mathbb{R}
^{-}}.$ Then the mapping $f_{1}(z)=f(\sqrt{z})\in OPL(\overline{%
\mathbb{C}
})$ (as a convention, $\sqrt{\infty }=\infty $) defines the desired closed
surface $\Sigma _{1}=(f_{1},\overline{%
\mathbb{C}
}),$ such that 
\begin{equation*}
A(\Sigma _{1})=A(f_{1},\overline{%
\mathbb{C}
})=A(f,\overline{\mathbb{H}})=A(\Sigma ).
\end{equation*}%
Furthermore, since $z\rightarrow z^{2}$ is a bijection from $%
(f^{-1}(E_{q})\cap \mathbb{H)\cup (}f^{-1}(E_{q})\cap \overline{%
\mathbb{R}
^{-}})$ onto $f_{1}^{-1}(E_{q}),$ we obtain 
\begin{equation*}
\overline{n}(f_{1},E_{q})=\overline{n}(f,E_{q})+\#\{\alpha \cap
f^{-1}(E_{q})\}\geq \overline{n}(f,E_{q}).
\end{equation*}
\end{proof}

\begin{remark}
\label{model} We introduce an equivalent relation on $\overline{U}$: $x\sim
y,$ iff either $x=y,$ or $y=h(x)\in \beta .$ When $\alpha \cup \beta \neq
\partial U,$ the quotient space $\overline{U}/\sim $ is a closed topological
disk, homeomorphic to $\overline{\Delta }.$ When $\partial U=\alpha +\beta ,$
the quotient space $\overline{U}/\sim $ is a topological sphere,
homeomorphic to $S.$ Let $[z]$ be the equivalent class of $z\in \overline{U}%
. $ We define $f^{\ast }([z])=f(z)$ for each $[z]\in \overline{U}/\sim .$
Then we obtain an \textquotedblleft abstract surface\textquotedblright\ $%
\Sigma ^{\ast }=(f^{\ast },\overline{U}/\sim ).$ $\Sigma _{1}=\left( f\circ
\psi ^{-1},\overline{\Delta }\right) $ constructed above is just a concrete
model of $\Sigma ^{\ast }.$ In fact, the mapping $\widetilde{\psi }:$ $%
[z]\rightarrow \psi (z)$ is a homeomorphism from $\overline{U}/\sim $ onto $%
\overline{\Delta }$ or $S,$ and thus it is reasonable to consider that $%
\Sigma ^{\ast }$ is isomorphic to $\Sigma _{1}.$
\end{remark}

The following corollary to sew a doubly-connected surface into a
simply-connected surface, has a similar interpretation, and the proof is
omitted.

\begin{corollary}
\label{glue annulus,C} Let $\Sigma =(f,\overline{A})$ be a doubly-connected
surface, such that $\partial A$ consists of two disjoint Jordan curves $%
\partial _{in}A$ and $\partial _{ex}A.$ Suppose $\partial _{in}A=\alpha
_{1}+\alpha _{2}$ and there exists $h\in Homeo^{+}(-\alpha _{1},\alpha
_{2}), $ such that $f|_{\alpha _{2}}\circ h=f|_{\alpha _{1}}.$ Then there
exists $\Sigma _{1}=(f_{1},\overline{\Delta })=(f\circ \psi ^{-1},\overline{%
\Delta })\in \mathbf{F},$ such that the following hold.

(1) $\psi \in OPL(\overline{A})$ maps $\overline{A}\backslash \partial _{in}A
$ homeomorphically onto $\overline{\Delta }\backslash \lbrack -\frac{1}{2},%
\frac{1}{2}],$ and $(\psi ,\alpha _{1})=[-\frac{1}{2},\frac{1}{2}]=(\psi
,-\alpha _{2})$ up to a reparametrization.

(2) For each $z\in \alpha _{1},$ $\psi (z)=\psi (h(z)),$ and so $f\circ \psi
^{-1}\in OPL(\overline{\Delta })$ is well-defined.

(3) For each $w\in S\backslash f(\alpha _{1}),$ $n(\Sigma _{1},w)=n(\Sigma
,w),$ and then $A(\Sigma _{1})=A(\Sigma ).$

(4) For each $a_{j}\in E_{q},$ $\overline{n}(\Sigma _{1},a_{j})=\overline{n}%
(\Sigma ,a_{j})+\#(\beta \cap \{a_{j}\}).$
\end{corollary}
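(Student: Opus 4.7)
The proof will parallel Lemma \ref{glue}(A) with the quotient-space interpretation in Remark \ref{model}. On $\overline{A}$ I introduce the equivalence relation $\sim$ defined by $z\sim h(z)$ for $z\in\alpha_{1}$, with no non-trivial identifications elsewhere. The hypothesis $f|_{\alpha_{2}}\circ h=f|_{\alpha_{1}}$ says exactly that $f$ descends to a well-defined mapping $f^{\ast}$ on the quotient $\overline{A}/\sim$, which is property (2) in disguise.

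The key topological step is to verify that $\overline{A}/\sim$ is homeomorphic to $\overline{\Delta}$. Since $\partial_{in}A=\alpha_{1}+\alpha_{2}$ is a Jordan curve and $h\in Homeo^{+}(-\alpha_{1},\alpha_{2})$ identifies $\alpha_{1}$ with $-\alpha_{2}$ endpoint-to-endpoint, the inner boundary of $A$ collapses to a single simple arc $\sigma$ whose endpoints are the common endpoints of $\alpha_{1}$ and $\alpha_{2}$; topologically the annulus is zipped up along $\partial_{in}A$, closing the hole and producing a closed disk in which $\sigma$ is an interior chord and $\partial_{ex}A$ is the boundary circle. Concretely, one may model $\overline{A}$ as $\{1/2\leq|z|\leq 1\}$ with $\alpha_{1},\alpha_{2}$ the two semicircles of $\{|z|=1/2\}$ and $h(z)=\overline{z}$, and then use the Joukowski-type fold $z\mapsto\frac{1}{2}(z+\frac{1}{4z})$ followed by a homeomorphism onto $\overline{\Delta}$, exhibiting $\overline{A}/\sim$ as a closed topological disk explicitly. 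Pick a homeomorphism $\widetilde{\psi}:\overline{A}/\sim\to\overline{\Delta}$ carrying $\sigma$ onto $[-1/2,1/2]$, so that $\psi\overset{def}{=}\widetilde{\psi}\circ q$ (with $q$ the quotient map) satisfies $(\psi,\alpha_{1})=[-1/2,1/2]=(\psi,-\alpha_{2})$; orientation-preservation of $\psi$ follows from $h$ being orientation-preserving, consistently with the inward orientation of $\partial A$.

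Defining $f_{1}=f\circ\psi^{-1}$, single-valuedness is immediate from the factorization. The verification $f_{1}\in OPL(\overline{\Delta})$ proceeds as in Lemma \ref{glue}: continuity comes from the universal property of the quotient; openness from $f$ and $\psi$ being open; and discreteness of $f_{1}^{-1}(w)=\psi(f^{-1}(w)\cap\overline{A})$ from the discreteness of $f^{-1}(w)\cap\overline{A}$ combined with $\psi$ being at most $2$-to-$1$. The boundary $(f_{1},\partial\Delta)$ coincides with $(f,\partial_{ex}A)$ up to reparametrization, hence is piecewise analytic, so $\Sigma_{1}\in\mathbf{F}$. Then (1) is built into $\psi$; (3) holds because $\psi$ is a homeomorphism from $\overline{A}\backslash\partial_{in}A$ onto $\overline{\Delta}\backslash[-1/2,1/2]$, giving a bijection of preimages with multiplicities preserved and hence $n(\Sigma_{1},w)=n(\Sigma,w)$ for $w\notin f(\alpha_{1})$; and (4) follows by noting that each pair $\{z,h(z)\}\subset\alpha_{1}\cup\alpha_{2}$ with $f(z)=a_{j}$ becomes a single preimage of $a_{j}$ lying in $[-1/2,1/2]\subset\Delta$, previously excluded from $\overline{n}(\Sigma,a_{j})$ since the latter counts only preimages in the open annulus $A$. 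I expect the main obstacle to be the orientation-compatible verification that $\overline{A}/\sim\cong\overline{\Delta}$ with the correct arrangement of $\widetilde{\psi}$; once this topology is settled, the mapping-theoretic conclusions reduce to local computations of the $\psi(z)=z^{2}$ type already carried out in Lemma \ref{glue}(A).
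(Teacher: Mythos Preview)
Your proposal is correct and is exactly the argument the paper has in mind: the paper omits the proof of this corollary, saying only that it ``has a similar interpretation'' to Remark~\ref{model}, i.e.\ the quotient-space construction you carry out. Your concrete Joukowski-type model of the fold on $\{1/2\le |z|\le 1\}$ is a clean realization of that quotient, and your verifications of (1)--(4) match the level of detail in the paper's proof of Lemma~\ref{glue}(A).
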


\subsection{Removing non-special folded points}

In order to describe the relation of the boundaries of surfaces in $\mathbf{F%
},$ we need the following conception of closed subarcs. Roughly speaking, a
closed subarc $\alpha $ of a closed curve $\beta $ is a closed curve, which
is the sum of some subarcs of $\beta $ in the order.

\begin{definition}
\label{closed subarc} Let $\alpha _{1}$ and $\alpha _{2}$ be two Jordan
curves. A closed curve $(f_{2},\alpha _{2})$ is called \emph{a closed subarc
of} $(f_{1},\alpha _{1}),$ if either $(f_{2},\alpha _{2})=(f_{1},\alpha
_{1}) $ up to a reparametrization (see Definition \ref{isomorphic}), or the
following situation happens.

\ \ There are two partitions%
\begin{eqnarray*}
\alpha _{1} &=&p_{0}\overset{\gamma _{1}}{\rightarrow }p_{1}\overset{\gamma
_{2}}{\rightarrow }p_{2}\text{ }\cdots \text{ }p_{2m-1}\overset{\gamma _{2m}}%
{\rightarrow }p_{0}, \\
\alpha _{2} &=&\beta _{1}+\cdots +\beta _{m},
\end{eqnarray*}%
such that for each $j=1,\cdots ,m,$ $f_{1}(p_{2j-2})=f_{1}(p_{2j-1}),$ and $%
(f_{1},\gamma _{2j})=(f_{2},\beta _{j})$ up to a reparametrization.
\end{definition}

The definition of closed subarcs ensures the transitivity as follows. If $%
\alpha _{2}$ is a closed subarc of $\alpha _{3},$ then a closed subarc of $%
\alpha _{2}$ is also a closed subarc of $\alpha _{3}.$

\begin{definition}
\label{better} A surface $\Sigma _{2}\in \mathbf{F}$ is called \emph{better
than} $\Sigma _{1}\in \mathbf{F},$ if the following hold.

(1) $H(\Sigma _{2})\geq H(\Sigma _{1}),$ and $sum(\Sigma _{2})\leq
sum(\Sigma _{1}).$ (See Definition \ref{covering sum}.)

(2) For each $a\in E_{q},$ $\overline{n}(\Sigma _{2},a)\leq \overline{n}%
(\Sigma _{1},a).$

(3) $\partial \Sigma _{2}$ is a closed subarc of $(\varphi ,\partial \Sigma
_{1})$, where $\varphi $ is a rotation of $S.$
\end{definition}

Since a rotation $\varphi $ of $S$ preserves the length and the area,
Condition (3) implies $L(\partial \Sigma _{2})\leq L(\partial \Sigma _{1}).$
The rotation $\varphi $ appears only in the next section, and in this
section, we simply require $\partial \Sigma _{2}$ is a closed subarc of $%
\partial \Sigma _{1}.$ The relation \textquotedblleft
better\textquotedblright\ is obviously transitive. The proof of the main
theorem consists of several steps. In each step, a new surface $\Sigma
_{new}\in \mathbf{F}$ is constructed, which is better than the old surface $%
\Sigma _{old}\in \mathbf{F}$ in the last step. The first step is to remove
the non-special folded points by applying Lemma \ref{glue} repeatedly.

\begin{proposition}
\label{no-folded} For each $\Sigma _{1}=(f_{1},\overline{U})\in \mathbf{F}$
with $H(\Sigma _{1})\geq 0$, there exists $\Sigma _{3}\in \mathbf{F},$ such
that $\Sigma _{3}$ is better than $\Sigma _{1}$, and $\Sigma _{3}$ has no
non-special folded points.
\end{proposition}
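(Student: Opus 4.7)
My plan is to induct on $n_{\mathrm{fold}}(\Sigma)$, the number of non-special folded boundary points, which is finite for every $\Sigma\in\mathbf{F}$ since folded points arise only at the finitely many boundary branch points of Lemma~\ref{cov-1}. At each induction step I will apply Lemma~\ref{glue}(A) to eliminate one chosen non-special folded point while verifying that $\Sigma'$ is better than $\Sigma$ and that no new non-special folded point is created.

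Fix a non-special folded point $p\in\partial U$ of the current surface $\Sigma=(f,\overline{U})$. By Lemma~\ref{cov-1}(iii), locally at $p$ I find two adjacent boundary subarcs $a\overset{\alpha}{\rightarrow}p\overset{\beta}{\rightarrow}b$ of $\partial U$ and $h\in Homeo^{+}(-\alpha,\beta)$ with $f|_{\beta}\circ h=f|_{\alpha}$. I then enlarge $\alpha$ and $\beta$ maximally subject to $\alpha^{\circ}\cap f^{-1}(E_{q})=\emptyset$ (equivalently $\beta^{\circ}\cap f^{-1}(E_{q})=\emptyset$, by the $h$-symmetry). The first task is to rule out case~(B) of Lemma~\ref{glue}: if $\partial U=\alpha+\beta$, then by maximality and by $f(p)\notin E_{q}$ one has $\alpha\cap f^{-1}(E_{q})\subset\{a\}$, so Lemma~\ref{glue}(B) combined with Proposition~\ref{R<0} forces
\begin{equation*}
R(\Sigma)=R(\Sigma_{c})+4\pi\cdot\#(\alpha\cap f^{-1}(E_{q}))\leq -8\pi+4\pi<0,
\end{equation*}
contradicting $H(\Sigma)\geq 0$ (which is preserved along the induction). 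Hence Lemma~\ref{glue}(A) applies and produces $\Sigma'\in\mathbf{F}$.

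Since $\beta\setminus\{b\}$ contains no preimage of $E_{q}$, parts~(3)--(5) of Lemma~\ref{glue} give $A(\Sigma')=A(\Sigma)$ and $\overline{n}(\Sigma',a_{j})=\overline{n}(\Sigma,a_{j})$ for every $a_{j}\in E_{q}$, so $R(\Sigma')=R(\Sigma)\geq 0$; combined with $L(\partial\Sigma')<L(\partial\Sigma)$ from part~(4) this yields $H(\Sigma')\geq H(\Sigma)$. The boundary relation $\partial\Sigma=\partial\Sigma'+(f,\alpha)+(f,\beta)$ makes $\partial\Sigma'$ a closed subarc of $\partial\Sigma$ in the sense of Definition~\ref{closed subarc}. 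For the $sum$ inequality I use the fact that the arc $f(\alpha)$ is traversed exactly once in each direction by $\partial\Sigma$, so $m^{+}=m^{-}=1$; the formula $n(U_{L})-n(U_{R})=m^{+}-m^{-}$ then forces the two adjacent components of $S\setminus\partial\Sigma$ to share a common covering number, and their merger in $S\setminus\partial\Sigma'$ gives $sum(\Sigma')\leq sum(\Sigma)$. Thus $\Sigma'$ is better than $\Sigma$ in the sense of Definition~\ref{better}.

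The main obstacle, and the reason for the maximal-extension choice, is showing that $n_{\mathrm{fold}}$ strictly drops. The point $p$ disappears into the interior (becoming an ordinary interior branch point of exponent $d/2$), and nothing else changes outside a small neighborhood of $\alpha\cup\beta$. The only new candidate is the identified point $a=b$, which is folded in $\Sigma'$ precisely when the arcs of $\partial U$ beyond $a$ and beyond $b$ have matching $f$-images near $a,b$. By maximality of $(\alpha,\beta)$, such matching could only have been blocked by the obstruction $a\in f^{-1}(E_{q})$ rather than by a mismatch; in that case $f(a=b)\in E_{q}$, so the new folded point is \emph{special}, not non-special. Hence $n_{\mathrm{fold}}(\Sigma')<n_{\mathrm{fold}}(\Sigma)$, and iterating produces the required $\Sigma_{3}$ with no non-special folded points.
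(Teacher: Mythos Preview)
Your approach is essentially the paper's: choose a non-special folded point $p$, extend the matching arcs $\alpha,\beta$ maximally subject to $f(\beta^{\circ})\cap E_{q}=\emptyset$, rule out the closed-surface case via Proposition~\ref{R<0}, apply Lemma~\ref{glue}(A), and check that the number of non-special folded points strictly drops by the maximality of $(\alpha,\beta)$. The verification that $R$ is preserved, that $L$ decreases, and that the new identified point is either not folded or special, all match the paper's reasoning.

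There is, however, a genuine gap in your argument for $sum(\Sigma')\leq sum(\Sigma)$. Your claim that ``the arc $f(\alpha)$ is traversed exactly once in each direction by $\partial\Sigma$, so $m^{+}=m^{-}=1$'' is unjustified and generally false: other portions of $\partial U$ may also map onto (parts of) $f(\alpha)$, and $f(\alpha)$ need not even be an admissible subarc of $\partial\Sigma$ (it may fail to be simple, or may cross other parts of $\partial\Sigma$). Consequently the picture of ``two adjacent components merging'' does not apply. The paper's argument avoids this entirely: since $\partial\Sigma'$, viewed as a point set, is contained in $\partial\Sigma$, every component $W$ of $S\setminus\partial\Sigma$ lies in a unique component $V(W)$ of $S\setminus\partial\Sigma'$; and by Lemma~\ref{glue}(3) one has $n(\Sigma,W)=n(\Sigma',V(W))$ for any $w\in W\setminus f(\alpha)$. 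Since the map $W\mapsto V(W)$ is surjective but possibly not injective, $sum(\Sigma')\leq sum(\Sigma)$ follows. Replace your $m^{\pm}$ computation with this containment argument and the proof is complete.
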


\begin{proof}
\ \ When $\Sigma _{1}$ has no non-special folded points, $\Sigma _{3}=\Sigma
_{1}$ is desired. So we may assume $\Sigma _{1}$ has a non-special folded
point $p$. Let $a\overset{\alpha }{\rightarrow }p$ and $p\overset{\beta }{%
\rightarrow }b$ be maximal subarcs of $\partial U$, such that $\alpha
^{\circ }\cap \beta ^{\circ }=\emptyset ,$ $(f_{1},\alpha )=(f_{1},-\beta )$
up to a reparametrization, and $f_{1}(\beta ^{\circ })\cap E_{q}=\emptyset .$
We claim $a\neq b.$ Otherwise $\alpha +\beta =\partial U$, and by Lemma \ref%
{glue}, $\Sigma _{1}$ could be sewn into a closed surface $\Sigma ^{\prime
}, $ such that 
\begin{equation*}
\overline{n}(\Sigma ^{\prime },E_{q})=\overline{n}(\Sigma
_{1},E_{q})+\#(\beta \cap f_{1}^{-1}(E_{q}))\leq \overline{n}(\Sigma
_{1},E_{q})+1.
\end{equation*}%
Then by Proposition \ref{R<0}, 
\begin{equation*}
R(\Sigma _{1})\leq R(\Sigma ^{\prime })+4\pi \leq -4\pi ,
\end{equation*}%
which is a contradiction to $H(\Sigma _{1})\geq 0.$

\ \ Since $\alpha \cup \beta \neq \partial U,$ by Lemma \ref{glue}, $\Sigma
_{1}$ could be sewn into $\Sigma _{2}=(f_{1}\circ \psi ^{-1},\overline{%
\Delta })\in \mathbf{F},$ where $\psi \in OPL(\overline{U})$ maps $\overline{%
U}\backslash (\alpha \cup \beta )$ homeomorphically onto $\overline{\Delta }%
\backslash \lbrack 0,1],$ and $(\psi ,-\alpha )=(\psi ,\beta )=[0,1]$ up to
a reparametrization. Then 
\begin{equation*}
\partial \Sigma _{1}=(f_{1},\partial U\backslash (\alpha \cup \beta )+\alpha
+\beta )=\partial \Sigma _{2}+(f_{1},\alpha )+(f_{1},\beta ),
\end{equation*}%
and so $\partial \Sigma _{2}$ is a closed subarc of $\partial \Sigma _{1},$
and $L(\partial \Sigma _{2})<L(\partial \Sigma _{1}).$ Since $f_{1}(\beta
^{\circ })\cap E_{q}=\emptyset ,$ for each $a_{j}\in E_{q},$ $\overline{n}%
(\Sigma _{2},a_{j})=\overline{n}(\Sigma _{1},a_{j}).$ Since $A(\Sigma
_{2})=A(\Sigma _{1}),$ we obtain $R(\Sigma _{2})=R(\Sigma _{1})\geq 0,$ and
then $H(\Sigma _{2})\geq H(\Sigma _{1})\geq 0.$ Furthermore, each component $%
W$ of $S\backslash \partial \Sigma _{1}$ is contained in one component $V(W)$
of $S\backslash \partial \Sigma _{2},$ with $n(\Sigma _{1},W)=n(\Sigma
_{2},V(W)).$ Hence $sum(\Sigma _{2})\leq sum(\Sigma _{1}),$ and
consequently, $\Sigma _{2}$ is better than $\Sigma _{1}.$

\ \ When $f_{1}(a)=f_{1}(b)\in E_{q},$ either $\psi (a)=1=\psi (b)$ is a
special folded point of $\Sigma _{2},$ or $1$ is not a folded point. When $%
f_{1}(a)=f_{1}(b)\notin E_{q},$ by the maximality of $\alpha $ and $\beta ,$ 
$1$ is not a folded point of $\Sigma _{2}.$ Evidently, $\psi ^{-1}$ is an
injection from other folded points of $\Sigma _{2},$ to other folded points
of $\Sigma _{1}.$ In a word, $\Sigma _{2}$ has fewer non-special folded
points than $\Sigma _{1}$ does.

\ \ The previous process to remove non-special folded points, could be
applied to $\Sigma _{2}$ again, if $\Sigma _{2}$ still has non-special
folded points. In finite steps, we obtain $\Sigma _{3}\in \mathbf{F}$
without non-special folded points, such that $\Sigma _{3}$ is better than $%
\Sigma _{1}.$
\end{proof}

\subsection{ Removing interior non-special branch points}

In this section, we introduce how to move the interior branch points of a
surface to the boundary.

\begin{proposition}
\bigskip \label{in to bd} Let $\Sigma _{1}=(f_{1},\overline{\Delta })\in 
\mathbf{F},$ and let $p_{0}\in \Delta $ be a non-special branch point of $%
\Sigma _{1}.$ Then there exists $\Sigma _{2}=(f_{2},\overline{\Delta })\in 
\mathbf{F},$ such that $\Sigma _{2}$ is better than $\Sigma _{1},$ and one
of the following holds.

(i) $sum(\Sigma _{2})<sum(\Sigma _{1}).$

(ii) $f_{2}|_{\partial \Delta }=f_{1}|_{\partial \Delta }$, and 
\begin{equation*}
\#C(f_{2})\backslash (\partial \Delta \cup
f_{2}^{-1}(E_{q}))<\#C(f_{1})\backslash (\partial \Delta \cup
f_{1}^{-1}(E_{q})).
\end{equation*}
\end{proposition}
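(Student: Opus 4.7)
The approach is to modify $\Sigma_1$ locally around $p_0$ to remove the interior branching, using either a ``replacement of a sub-disk'' operation (targeting option (ii)) or a cut-and-sew operation (targeting option (i)), depending on whether a nearby preimage of a special point is accessible.

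To prepare, let $q_0 = f_1(p_0) \notin E_q$ and $d = v_{f_1}(p_0) \geq 2$. By Lemma \ref{cov-1}, there is a Jordan neighbourhood $\overline{V}$ of $p_0$ in $\Delta$ on which $f_1$ is conjugate to $w \mapsto w^d$, mapping $d$-to-$1$ onto a disk $\overline{D}$ around $q_0$, with $\overline{D} \cap E_q = \emptyset$ and $\overline{D} \cap CV(\Sigma_1) = \{q_0\}$. Using Lemma \ref{lifts-1} to lift a simple arc $\gamma$ from $q_0$ avoiding $E_q$ and $CV(\Sigma_1) \setminus \{q_0\}$, I obtain $d$ simple lifts from $p_0$ with interiors in $\Delta \setminus C(\Sigma_1)$. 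Extending $\gamma$ as far as possible, one lift $\alpha$ terminates either at $\partial\Delta$, at another branch point, or at a preimage of $E_q$. Genericity arguments (using that $C(\Sigma_1)$, $f_1^{-1}(E_q)$, and the folded points are all finite) allow me to choose $\alpha$ so that $(f_1,\alpha)$ is a simple piecewise analytic arc with $f_1(\alpha^\circ) \cap E_q = \emptyset$, and so that the endpoint of $\alpha$ is either a regular non-folded non-special point of $\partial\Delta$ or an interior preimage of a point of $E_q$.

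In the first case (endpoint interior at a preimage of $E_q$), I would thicken $\alpha$ to a Jordan neighbourhood $V' \subset \Delta$ containing only $p_0$ and that endpoint as branch/special points, and replace $f_1|_{V'}$ with a new light map $f_2|_{V'}$ having the same values on $\partial V'$ but with all the branching concentrated at the special-preimage endpoint. Since $f_1|_{\partial V'}$ has a prescribed winding, the total branch index of any such extension to $V'$ is determined by Riemann--Hurwitz, and I can realize it as a special branch; this yields option (ii). In the second case (endpoint $p_1 \in \partial\Delta$), I would apply Lemma \ref{cut} along $\alpha$, producing $\Sigma_1' = (f_1 \circ \varphi, \overline{\Delta}) \in \mathbf{F}$. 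Via the substitution $\zeta = w^{1/2}$ on the slit neighbourhood of $p_0$ in $\overline{V}\setminus\alpha$, the local model at $p_0$ in $\Sigma_1'$ becomes $\zeta \mapsto \zeta^{2d}$ on a half-disk, so $p_0$ is now a boundary folded point of multiplicity $d$. Then apply Lemma \ref{glue} to sew the maximal folded pair at $p_0$: if this pair strictly extends beyond the cut into the original boundary, the sewing gives $\Sigma_2 \in \mathbf{F}$ with $L(\partial \Sigma_2) < L(\partial \Sigma_1)$ and strictly fewer interior non-special branch points.

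The main obstacle is the subcase where the maximal folded pair coincides with the cut, so that sewing merely undoes the operation. Here one must instead argue, using the fact that $f_1(\alpha)$ subdivides a component of $S \setminus \partial \Sigma_1$, that a refined version of the construction decreases $sum(\Sigma)$ strictly to yield option (i); this likely requires invoking the other $d-1$ lifts of $\gamma$ to design a coordinated cut-and-sew. Throughout, the most delicate part is verifying that $\Sigma_2$ satisfies all the conditions of being \emph{better than} $\Sigma_1$ in the sense of Definition \ref{better}, particularly that $\partial \Sigma_2$ is a closed subarc of $\partial \Sigma_1$ (up to rotation) and that $H(\Sigma_2) \geq H(\Sigma_1)$, which requires careful bookkeeping of how the cut and sew operations affect the components of $S \setminus \partial \Sigma$ and their covering multiplicities.
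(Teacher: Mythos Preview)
Your proposal identifies the right starting point (lifting an arc from $q_0$ toward $E_q$ via Lemma~\ref{lifts-1}) but has a genuine gap at the core step. Cutting along a \emph{single} lift $\alpha$ and then sewing back is, as you yourself note, essentially the identity operation in the generic case: the maximal folded pair created by the cut coincides with the cut itself. This is not a side technicality to be patched later; it is the heart of the argument, and your proposal does not supply the missing idea beyond saying it ``likely requires invoking the other $d-1$ lifts.''

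The paper's resolution is to use \emph{all $d$ lifts simultaneously} in a coordinated cut-and-resew. One cuts $\overline{\Delta}$ along $\alpha_1,\ldots,\alpha_d$, so that each $\alpha_j$ splits into two boundary arcs, producing $2d$ new boundary arcs $\gamma_1,\ldots,\gamma_{2d}$. One then re-glues with a \emph{shifted} pairing: instead of sewing $\gamma_{2j-1}$ to $\gamma_{2j}$ (which would simply undo the cut), one sews $\gamma_{2j}$ to $\gamma_{2j+1}$. Under this shifted pairing each of the $d$ copies of $p_0$ becomes a regular point (the angle between consecutive $\alpha_j$ at $p_0$ already maps to a full perigon), while the $d$ endpoints $p_1,\ldots,p_d$ are identified into a single new branch point. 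When exactly one lift reaches $\partial\Delta$, or when all lifts end at distinct interior preimages of $a_1\in E_q$, this yields option (ii) with $f_2|_{\partial\Delta}=f_1|_{\partial\Delta}$ and one fewer interior non-special branch point. When two lifts collide, or two distinct lifts reach $\partial\Delta$, the Jordan curve $\alpha_i\cup\alpha_j$ allows one to split off either a closed surface or a second surface in $\mathbf{F}$, and a direct count gives $sum(\Sigma_2)<sum(\Sigma_1)$, i.e.\ option (i).

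Your ``replacement'' approach for the interior-endpoint case is also incomplete as stated: knowing the winding of $f_1|_{\partial V'}$ and invoking Riemann--Hurwitz does not construct an explicit $f_2\in OPL(\overline{V'})$ with the prescribed boundary values, nor does it verify $\overline{n}(\Sigma_2,a_j)\leq\overline{n}(\Sigma_1,a_j)$ for every $a_j\in E_q$ (which is part of ``better than''). The shifted cut-and-resew makes all of these verifications explicit and elementary.
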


\begin{proof}
\ \ Let $d=v_{f_{1}}(p_{0})\geq 2,$ and $q_{0}=f_{1}(p_{0})\notin E_{q}.$
Let $q_{0}\overset{\beta _{1}}{\rightarrow }a_{1}$ be a simple path with $%
a_{1}\in E_{q}$, such that $\beta _{1}^{\circ }\cap (E_{q}\cup CV(\Sigma
_{1}))=\emptyset .$ Then by Lemma \ref{lifts-1}, there is a subarc $q_{0}%
\overset{\beta }{\rightarrow }q_{1}$ of $\beta _{1},$ such that $\beta $ has
exactly $d$ lifts $p_{0}\overset{\alpha _{1}}{\rightarrow }p_{1},\cdots ,$ $%
p_{0}\overset{\alpha _{d}}{\rightarrow }p_{d}$ from $p_{0},$ arranged in the
clockwise order at $p_{0}$. In addition, $q_{1}=a_{1}\in E_{q},$ or for some 
$j=1,2,\cdots ,d,$ $p_{j}\in \partial \Delta .$ Let $h_{ij}\in
Homeo^{+}(\alpha _{i},\alpha _{j}),$ such that $f_{1}|_{\alpha
_{i}}=f_{1}|_{\alpha _{j}}\circ h_{ij}$ on $\alpha _{i}.$

\ \ Now there are only five cases to discuss. Case (1): for some $i\neq j,$ $%
p_{i}=p_{j}\in \partial \Delta ,$ and then $q_{1}=f_{1}(p_{i})=f_{1}(p_{j})%
\in CV(\Sigma _{1}),$ and so $q_{1}=a_{1}\in E_{q}.$ Case (2): for some $%
i\neq j,$ $p_{i}=p_{j}\in \Delta ,$ and then $q_{1}\in CV(\Sigma _{1}),$ and
so $q_{1}=a_{1}\in E_{q}.$ Case (3): $p_{1},\cdots ,p_{d}$ are distinct, and
there is only one $p_{j}\in \partial \Delta ,$ say $p_{1}\in \partial \Delta
.$ Case (4): $p_{1},\cdots ,p_{d}$ are distinct in $\Delta ,$ and then $%
f_{1}(p_{1})=q_{1}=a_{1}\in E_{q}.$ Case (5): for some $i\neq j,$ $%
p_{i},p_{j}\in \partial \Delta ,$ but $p_{i}\neq p_{j}.$ It doesn't matter
that two cases happen for some $\Sigma _{1}\in \mathbf{F}.$ For instance,
when $p_{1}=p_{2}\in \partial \Delta ,$ and $p_{3}=p_{4}\in \Delta ,$ we
could just deal with $\Sigma _{1}$ as in Case (1), and ignore the fact that
Case (2) also happens. In each case, the corresponding figure shows the
process to construct the desired new surface in $\mathbf{F}.$ See Notation %
\ref{interval} for the notations $[0,1],$ $[-\frac{1}{2},\frac{1}{2}]$ in
the figures. \FRAME{dtbpF}{4.4339in}{1.4892in}{0pt}{}{}{in-bd,case1,2.emf}{%
\special{language "Scientific Word";type "GRAPHIC";display
"USEDEF";valid_file "F";width 4.4339in;height 1.4892in;depth
0pt;original-width 13.3536in;original-height 7.5014in;cropleft "0";croptop
"1";cropright "1";cropbottom "0.4232";filename
'in-bd,case1,2.emf';file-properties "XNPEU";}}

\ \ In Case (1), we may assume the Jordan curve $\alpha _{j}+(-\alpha _{i})$
encloses a Jordan domain $D$ in $\Delta $. By Lemma \ref{glue}, $(f_{1},%
\overline{D})\in \mathbf{F}$ could be sewn into a closed surface $\Sigma
_{3}=(f_{1}\circ \psi _{S}^{-1},S),$ where $\psi _{S}\in OPL(\overline{D})$
maps $\overline{D}$ onto $S,$ and $\psi _{S}|_{\alpha _{i}}=\psi
_{S}|_{\alpha _{j}}\circ h_{ij}$ on $\alpha _{i}.$ Moreover, by topology,
there is $\psi \in OPL(\overline{\Delta }\backslash D),$ such that $\psi |_{%
\overline{\Delta }\backslash \overline{D}}\in Homeo^{+}(\overline{\Delta }%
\backslash \overline{D},\overline{\Delta }\backslash \lbrack 0,1]),$ $(\psi
,\alpha _{i})=[0,1]=(\psi ,\alpha _{j}),$ and $\psi |_{\alpha _{i}}=\psi
|_{\alpha _{j}}\circ h_{ij}$ on $\alpha _{i},$ as in the figure for Case 1.
Then $f_{2}\overset{def}{=}f_{1}\circ \psi ^{-1}\in OPL(\overline{\Delta })$
is well-defined, and we will prove $\Sigma _{2}=(f_{2},\overline{\Delta }%
)\in \mathbf{F}$ is better than $\Sigma _{1},$ and $sum(\Sigma
_{2})<sum(\Sigma _{1})$ later.

\ \ In Case (2), we may assume the Jordan curve $\alpha _{j}+(-\alpha _{i})$
encloses a Jordan domain $D$ in $\Delta .$ By Lemma \ref{glue}, $(f_{1},%
\overline{D})\in \mathbf{F}$ could be sewn into\ a closed surface $\Sigma
_{3}=(f_{1}\circ \psi _{S}^{-1},S),$ where $\psi _{S}\in OPL(\overline{D})$
maps $\overline{D}$ onto $S,$ and $\psi _{S}|_{\alpha _{i}}=\psi
_{S}|_{\alpha _{j}}\circ h_{ij}$ on $\alpha _{i}.$ Moreover, $(f_{1},%
\overline{\Delta }\backslash D)$ is a doubly-connected surface, and $\alpha
_{i}+(-\alpha _{j})=\partial _{in}(\overline{\Delta }\backslash D)$. By
topology, there exists $\psi \in OPL(\overline{\Delta }\backslash D)$, such
that $\psi $ maps $\overline{\Delta }\backslash \overline{D}$
homeomorphically onto $\overline{\Delta }\backslash \lbrack -\frac{1}{2},%
\frac{1}{2}],$ $(\psi ,\alpha _{i})=(\psi ,\alpha _{j})=[-\frac{1}{2},\frac{1%
}{2}],$ and $\psi |_{\alpha _{i}}=\psi |_{\alpha _{j}}\circ h_{ij}$ on $%
\alpha _{i},$ as in the figure for Case 2. By Corollary \ref{glue annulus,C}%
, $(f_{1},\overline{\Delta }\backslash D)$ could be sewn into a new surface%
\begin{equation*}
\Sigma _{2}=(f_{2},\overline{\Delta })=(f_{1}\circ \psi ^{-1},\overline{%
\Delta })\in \mathbf{F}.
\end{equation*}%
We claim $\Sigma _{2}$ is better than $\Sigma _{1},$ and $sum(\Sigma
_{2})<sum(\Sigma _{1}).$

\ \ In Case (1) or Case (2), since up to a reparametrization, 
\begin{equation*}
\partial \Sigma _{2}=(f_{1}\circ \psi ^{-1},\partial \Delta
)=(f_{1},\partial \Delta )=\partial \Sigma _{1},
\end{equation*}%
we have $L(\partial \Sigma _{2})=L(\partial \Sigma _{1}).$ For each $w\in
S\backslash \beta ,$ we have 
\begin{equation}
\overline{n}(\Sigma _{2},w)+\overline{n}(\Sigma _{3},w)=\#f_{1}^{-1}(w)\cap
(\Delta \backslash \overline{D})+\#f_{1}^{-1}(w)\cap D=\overline{n}(\Sigma
_{1},w).  \label{(1)}
\end{equation}%
Clearly, each $w\in E_{q}\backslash \{a_{1}\}$ is not in $\beta ,$ and thus (%
\ref{(1)}) also holds in this case. Since a component $W$ of $S\backslash
\partial \Sigma _{1}$ is also a component of $S\backslash \partial \Sigma
_{2},$ and $n(\Sigma _{2},W)+\deg (\Sigma _{3})=n(\Sigma _{1},W),$ we have $%
sum(\Sigma _{2})<sum(\Sigma _{1}).$ Furthermore, 
\begin{equation*}
A(\Sigma _{1})=A(f_{1},\overline{\Delta })=A(f_{1},\overline{\Delta }%
\backslash D)+A(f_{1},D)=A(\Sigma _{2})+A(\Sigma _{3}).
\end{equation*}

\ In the first two cases, since $\beta ^{\circ }\cap E_{q}=\emptyset ,$ we
have 
\begin{equation*}
\#f^{-1}(a_{1})\cap \alpha _{j}^{\circ }=\emptyset =\#f^{-1}(a_{1})\cap
\alpha _{i}^{\circ }.
\end{equation*}%
Then by Lemma \ref{glue},%
\begin{eqnarray*}
&&\overline{n}(\Sigma _{2},a_{1})+\overline{n}(\Sigma _{3},a_{1}) \\
&=&\#f^{-1}(a_{1})\cap (\Delta \backslash \overline{D})+\#f^{-1}(a_{1})\cap
\{p_{i}\}\cap \Delta +\#f^{-1}(a_{1})\cap D+\#f^{-1}(a_{1})\cap \alpha _{j}
\\
&=&\#f^{-1}(a_{1})\cap \lbrack (\Delta \backslash \overline{D})\cup
(\{p_{i}\}\cap \Delta )\cup D\cup (\alpha _{j}^{\circ })\cup (\alpha
_{i}^{\circ })]+\#f^{-1}(a_{1})\cap \alpha _{j} \\
&=&\#(f^{-1}(a_{1})\cap \Delta )+\#\{p_{j}\}=\overline{n}(\Sigma
_{1},a_{1})+1.
\end{eqnarray*}%
In conclusion, 
\begin{equation*}
\overline{n}(\Sigma _{2},E_{q})+\overline{n}(\Sigma _{3},E_{q})=\overline{n}%
(\Sigma _{1},E_{q})+1,
\end{equation*}%
and by Proposition \ref{R<0}, $R(\Sigma _{2})=R(\Sigma _{1})-4\pi -R(\Sigma
_{3})>R(\Sigma _{1}).$ Therefore, $H(\Sigma _{2})>H(\Sigma _{1}),$ and then $%
\Sigma _{2}$ is better than $\Sigma _{1}.$\FRAME{dtbpF}{5.3428in}{1.8403in}{%
0pt}{}{}{in-bd,case3.emf}{\special{language "Scientific Word";type
"GRAPHIC";display "USEDEF";valid_file "F";width 5.3428in;height
1.8403in;depth 0pt;original-width 10.0984in;original-height
7.5014in;cropleft "0.0318";croptop "1";cropright "0.9840";cropbottom
"0.4053";filename 'in-bd,case3.emf';file-properties "XNPEU";}}

\ \ In Case (3), $p_{1},\cdots ,p_{d}$ are distinct, and $p_{1}\in \partial
\Delta ,$ but $p_{2},\cdots ,p_{d}\in \Delta .$ By applying Lemma \ref{cut} $%
d$ times, $\overline{\Delta }$ may be cut along $\alpha _{1},\cdots ,\alpha
_{d}$, and these $d$ arcs split into $2d$ sequential subarcs $b_{0}\overset{%
\gamma _{1}}{\rightarrow }b_{1},\cdots ,$ $b_{2d-1}\overset{\gamma _{2d}}{%
\rightarrow }b_{2d}$ of $\partial \Delta $, to obtain $\Sigma
_{4}=(f_{1}\circ \varphi ,\overline{\Delta })\in \mathbf{F}$. Here $\varphi
\in OPL(\overline{\Delta })$ maps $\overline{\Delta }\backslash (\gamma
_{1}\cup \cdots \cup \gamma _{2d})$ homeomorphically onto $\overline{\Delta }%
\backslash (\alpha _{1}\cup \cdots \cup \alpha _{d}),$ and%
\begin{eqnarray*}
(\varphi ,-\gamma _{1}) &=&\alpha _{1}=(\varphi ,\gamma _{2d}),\text{ }%
(\varphi ,\gamma _{2})=\alpha _{2}=(\varphi ,-\gamma _{3}),\text{ }\cdots ,
\\
(\varphi ,\gamma _{2d-2}) &=&\alpha _{d}=(\varphi ,-\gamma _{2d-1}).\text{ }
\end{eqnarray*}%
Then, the following pairs of adjacent subarcs $\{\gamma _{1},\gamma
_{2}\},\cdots ,\{\gamma _{2d-1},\gamma _{2d}\}$ of $\partial \Delta $ could
be sewn together by Lemma \ref{glue}, resulting in 
\begin{equation*}
\Sigma _{2}=(f_{2},\overline{\Delta })=(f_{1}\circ \varphi \circ \psi ^{-1},%
\overline{\Delta })\in \mathbf{F}.
\end{equation*}%
Here, $\psi \in OPL(\overline{\Delta })$ maps $\overline{\Delta }\backslash
(\gamma _{1}\cup \cdots \cup \gamma _{2d})$ homeomorphically onto $\overline{%
\Delta }\backslash (\Gamma _{1}\cup \cdots \cup \Gamma _{d}),$ where $\Gamma
_{1},\cdots ,\Gamma _{d}$ are $d$ simple curves in $\Delta \cup \{p_{1}\},$
with the common terminal point $p_{1}.$ In addition, $\Gamma _{1}\backslash
\{p_{1}\},\cdots ,\Gamma _{d}\backslash \{p_{1}\}$ are pairwise disjoint,
and 
\begin{equation*}
\psi (-\gamma _{1})=\Gamma _{1}=\psi (\gamma _{2}),\cdots ,\text{ }\psi
(-\gamma _{2d-1})=\Gamma _{d}=\psi (\gamma _{2d}).
\end{equation*}%
By compositing a self-homeomorphism of $\overline{\Delta },$ we may assume $%
\varphi |_{\partial \Delta \backslash (\gamma _{1}\cup \cdots \cup \gamma
_{2d})}=\psi |_{\partial \Delta \backslash (\gamma _{1}\cup \cdots \cup
\gamma _{2d})},$ and then $f_{2}|_{\partial \Delta }=f_{1}|_{\partial \Delta
}.$

\ \ The figures (a) to (d) for Case (3) give an interpretation of the
previous process to construct $\Sigma _{2}$ from $\Sigma _{1}$. In (a), $%
d=v_{f_{1}}(p_{0})$ is assumed to be $3$. To describe how $\overline{\Delta }
$ is cut and sewn more intuitively, the domain $\overline{\Delta }$ of $%
\Sigma _{4}$ is drawn as the shapes in (b) and in (c). Moreover, all lifts $%
\alpha _{1},\cdots ,\alpha _{d}$ are drawn as line segments, although they
are usually curves. These figures work for each surface $\Sigma _{1}\in 
\mathbf{F}$ with $v_{f_{1}}(p_{0})=3$ in Case (3), up to homeomorphisms. We
claim $\Sigma _{2}$ is the desired surface in $\mathbf{F}.$

\ \ Since $f_{2}|_{\partial \Delta }=f_{1}|_{\partial \Delta }$, we have $%
\partial \Sigma _{2}=\partial \Sigma _{1}$ and $L(\partial \Sigma
_{2})=L(\partial \Sigma _{1}).$ By Lemma \ref{cut} and Lemma \ref{glue}, we
have $A(\Sigma _{2})=A(\Sigma _{4})=A(\Sigma _{1}),$ and for every $w\in
S\backslash \beta ,$ 
\begin{equation}
\overline{n}(\Sigma _{2},w)=\overline{n}(\Sigma _{4},w)=\overline{n}(\Sigma
_{1},w).  \label{(2)}
\end{equation}%
Each $w\in E_{q}\backslash \{q_{1}\}$ is not in $\beta ,$ and thus (\ref{(2)}%
) also holds in this case. Then for a component $W$ of $S\backslash \partial
\Sigma _{2}=S\backslash \partial \Sigma _{1}$, we have $n(\Sigma
_{2},W)=n(\Sigma _{1},W),$ $sum(\Sigma _{2})=sum(\Sigma _{1}).$

\ \ However, as for $q_{1},$ $\psi ^{-1}\circ \varphi $ is a\ bijection from 
$f_{2}^{-1}(q_{1})\backslash (\partial \Delta \cup \Gamma _{1}\cup \cdots
\cup \Gamma _{d})$ onto $f_{1}^{-1}(q_{1})\backslash (\partial \Delta \cup
\alpha _{1}\cup \cdots \cup \alpha _{d}).$ Thus, we have%
\begin{eqnarray*}
\overline{n}(\Sigma _{2},q_{1}) &=&\#f_{2}^{-1}(q_{1})\backslash (\partial
\Delta \cup \Gamma _{1}\cup \cdots \cup \Gamma _{d})+\#f_{2}^{-1}(q_{1})\cap
\Delta \cap (\Gamma _{1}\cup \cdots \cup \Gamma _{d}) \\
&=&\#f_{1}^{-1}(q_{1})\backslash (\partial \Delta \cup \alpha _{1}\cup
\cdots \cup \alpha _{d})+0 \\
&=&\#f_{1}^{-1}(q_{1})\cap \Delta -\#f_{1}^{-1}(q_{1})\cap \Delta \cap
(\alpha _{1}\cup \cdots \cup \alpha _{d}) \\
&=&\overline{n}(\Sigma _{1},q_{1})-\#\{p_{2},\cdots ,p_{d}\}=\overline{n}%
(\Sigma _{1},q_{1})-(d-1).
\end{eqnarray*}%
In conclusion, $\overline{n}(\Sigma _{2},E_{q})\leq \overline{n}(\Sigma
_{1},E_{q})$, and then $H(\Sigma _{2})\geq H(\Sigma _{1}),$ and therefore $%
\Sigma _{2}$ is better than $\Sigma _{1}.$

\ \ For each $j=1,2,\cdots ,d,$ $f_{1}$ maps the angle at $p_{0}$ between $%
\alpha _{j}$ and $\alpha _{j+1}$ ($\alpha _{d+1}=\alpha _{1}$) to a perigon,
and then $f_{2}$ maps the perigons at $\psi (b_{1}),$ $\psi (b_{3}),\cdots ,$
$\psi (b_{2d-1})$ to perigons. In other words, $\psi (b_{1}),$ $\psi
(b_{3}),\cdots ,$ $\psi (b_{2d-1})$ $\in \psi (\varphi ^{-1}(p_{0}))$ are $d$
regular points of $\Sigma _{2}.$ Conversely, $p_{1}=\psi (b_{0})=\cdots
=\psi (b_{2d})\in \partial \Delta $ is a boundary branch point of $\Sigma
_{2}.$ Since $\varphi \circ \psi ^{-1}$ is homeomorphic from $\Delta
\backslash (\Gamma _{1}\cup \cdots \cup \Gamma _{d})$ onto $\Delta
\backslash (\alpha _{1}\cup \cdots \cup \alpha _{d}),$ $\varphi \circ \psi
^{-1}$ is a bijection from 
\begin{equation*}
C(f_{2})\cap \Delta \backslash (\Gamma _{1}\cup \cdots \cup \Gamma
_{d})=C(f_{2})\cap \Delta
\end{equation*}%
onto 
\begin{equation*}
C(f_{1})\cap \Delta \backslash (\alpha _{1}\cup \cdots \cup \alpha
_{d})=(C(f_{1})\cap \Delta )\backslash \{p_{0},p_{2},\cdots ,p_{d}\}.
\end{equation*}%
Thus, $\varphi \circ \psi ^{-1}$ is bijective from $(C(f_{2})\cap \Delta
)\backslash f_{2}^{-1}(E_{q})$ onto $(C(f_{1})\cap \Delta )\backslash
(\{p_{0},p_{2},\cdots ,p_{d}\}\cup f_{1}^{-1}(E_{q})).$ Therefore, we have 
\begin{equation*}
\#C(f_{2})\backslash (\partial \Delta \cup
f_{2}^{-1}(E_{q}))=\#C(f_{1})\backslash (\partial \Delta \cup
f_{1}^{-1}(E_{q})\cup \{p_{0},p_{2},\cdots ,p_{d}\})
\end{equation*}%
\begin{equation*}
\leq \#C(f_{1})\backslash (\partial \Delta \cup f_{1}^{-1}(E_{q})\cup
\{p_{0}\})=\#C(f_{1})\backslash (\partial \Delta \cup f_{1}^{-1}(E_{q}))-1.
\end{equation*}%
In conclusion, $\Sigma _{2}$ is desired indeed.\FRAME{dtbpF}{5.1361in}{%
2.0565in}{0pt}{}{}{in-bd,case4.emf}{\special{language "Scientific Word";type
"GRAPHIC";display "USEDEF";valid_file "F";width 5.1361in;height
2.0565in;depth 0pt;original-width 13.3354in;original-height
7.5014in;cropleft "0.0812";croptop "1";cropright "0.9187";cropbottom
"0.4227";filename 'in-bd,case4.emf';file-properties "XNPEU";}}

\ \ In Case (4), $p_{1},\cdots ,p_{d}\in \Delta $ are distinct, and then $%
q_{1}=a_{1}\in E_{q}.$ For an annulus $A$ such that $\partial A$ is the
union of two Jordan curves $\partial _{in}A$ and $\partial _{ex}A$, there
exists $\varphi \in OPL(\overline{A})$, which maps $\overline{A}\backslash
\partial _{in}A$ homeomorphically onto $\overline{\Delta }\backslash (\alpha
_{1}\cup \cdots \cup \alpha _{d}),$ as in the figure for Case (4). In
addition, $(\varphi ,\gamma _{1})=\alpha _{1}=(\varphi ,-\gamma _{2}),\cdots
,$ $(\varphi ,\gamma _{2d-1})=\alpha _{d}=(\varphi ,-\gamma _{2d}),$ where 
\begin{equation*}
\partial _{in}A=b_{0}\overset{\gamma _{1}}{\rightarrow }b_{1}\overset{\gamma
_{2}}{\rightarrow }b_{2}\text{ }\cdots \text{ }b_{2d-1}\overset{\gamma _{2d}}%
{\rightarrow }b_{2d}\text{ (}b_{0}=b_{2d}\text{).}
\end{equation*}%
By Corollary \ref{cutin,C}, $\Sigma _{5}=(f_{1}\circ \varphi ,\overline{A})$
is a doubly-connected surface. By Corollary \ref{glue annulus,C}, the
following pairs of adjacent subarcs $\{\gamma _{2},\gamma _{3}\},\cdots
,\{\gamma _{2d-2},\gamma _{2d-1}\},$ $\{\gamma _{2d},\gamma _{1}\}$ of $%
\partial _{in}A$ could be sewn together respectively, resulting in a surface 
\begin{equation*}
\Sigma _{2}=(f_{2},\overline{\Delta })=(f_{1}\circ \varphi \circ \psi ^{-1},%
\overline{\Delta })\in \mathbf{F}.
\end{equation*}%
Here $\psi \in OPL(\overline{A})$ maps $\overline{A}\backslash \partial
_{in}A$ homeomorphically onto $\overline{\Delta }\backslash (\Gamma _{1}\cup
\cdots \cup \Gamma _{d}),$ where $\Gamma _{1},\cdots ,\Gamma _{d}$ are $d$
simple curves from $0$ in $\Delta ,$ as in the figure for Case (4). In
addition, $\Gamma _{1}\backslash \{0\},\cdots ,\Gamma _{d}\backslash \{0\}$
are pairwise disjoint, and $(\psi ,\gamma _{2})=\Gamma _{1}=(\psi ,-\gamma
_{3}),\cdots ,$ $(\psi ,\gamma _{2d})=\Gamma _{d}=(\psi ,-\gamma _{1}).$ By
compositing a self-homeomorphism of $\overline{\Delta },$ we may assume $%
\psi |_{\partial _{ex}A}=\varphi |_{\partial _{ex}A},$ and then $%
f_{2}|_{\partial \Delta }=f_{1}|_{\partial \Delta }.$

\ \ The figures (a) to (d) for Case (4) give an interpretation of the
previous process to construct $\Sigma _{2}$ from $\Sigma _{1}$. To describe
this process more intuitively, we assume $d=v_{f_{1}}(p_{0})=3,$ and the
domain $\overline{A}$ of $\Sigma _{5}$ is drawn as two shapes in (b) and in
(c), and all lifts $\alpha _{1},\cdots ,\alpha _{d}$ are drawn to be
straight. These figures work for each $\Sigma _{1}\in \mathbf{F}$ with $%
v_{f_{1}}(p_{0})=3$ in Case (4), up to homeomorphisms. We claim $\Sigma _{2}$
is the desired surface in $\mathbf{F}.$

\ \ Since $f_{2}|_{\partial \Delta }=f_{1}|_{\partial \Delta }$, we have $%
\partial \Sigma _{2}=\partial \Sigma _{1}$, and $L(\partial \Sigma
_{2})=L(\partial \Sigma _{1}).$ By Corollary \ref{cutin,C} and Corollary \ref%
{glue annulus,C}, we have $A(\Sigma _{2})=A(\Sigma _{5})=A(\Sigma _{1}),$
and for each $w\in S\backslash \beta ,$ 
\begin{equation}
\overline{n}(\Sigma _{2},w)=\overline{n}(\Sigma _{5},w)=\overline{n}(\Sigma
_{1},w).  \label{(3)}
\end{equation}%
Each $w\in E_{q}\backslash \{q_{1}\}$ is not in $\beta ,$ and thus (\ref{(3)}%
) also holds in this case. Then for each component $W$ of $S\backslash
\partial \Sigma _{2}=S\backslash \partial \Sigma _{1},$ we have $n(\Sigma
_{2},W)=n(\Sigma _{1},W),$ and then $sum(\Sigma _{2})=sum(\Sigma _{1}).$

\ However, as for $q_{1}=a_{1}\in E_{q},$ $\varphi \circ \psi ^{-1}$ is a\
bijection from $f_{2}^{-1}(a_{1})\backslash (\partial \Delta \cup \Gamma
_{1}\cup \cdots \cup \Gamma _{d})$ onto $f_{1}^{-1}(a_{1})\backslash
(\partial \Delta \cup \alpha _{1}\cup \cdots \cup \alpha _{d}).$ Then we have%
\begin{eqnarray*}
\overline{n}(\Sigma _{2},a_{1}) &=&\#f_{2}^{-1}(a_{1})\backslash (\partial
\Delta \cup \Gamma _{1}\cup \cdots \cup \Gamma _{d})+\#f_{2}^{-1}(a_{1})\cap
\Delta \cap (\Gamma _{1}\cup \cdots \cup \Gamma _{d}) \\
&=&\#f_{1}^{-1}(a_{1})\backslash (\partial \Delta \cup \alpha _{1}\cup
\cdots \cup \alpha _{d})+\#\{0\} \\
&=&\#f_{1}^{-1}(a_{1})\cap \Delta -\#f_{1}^{-1}(a_{1})\cap \Delta \cap
(\alpha _{1}\cup \cdots \cup \alpha _{d})+1 \\
&=&\overline{n}(\Sigma _{1},a_{1})-\#\{p_{1},\cdots ,p_{d}\}+1=\overline{n}%
(\Sigma _{1},a_{1})-(d-1).
\end{eqnarray*}%
Hence, $\overline{n}(\Sigma _{2},E_{q})=\overline{n}(\Sigma
_{1},E_{q})-(d-1),$ and thus $H(\Sigma _{2})>H(\Sigma _{1}).$ In conclusion, 
$\Sigma _{2}$ is better than $\Sigma _{1}$, and $f_{2}|_{\partial \Delta
}=f_{1}|_{\partial \Delta }.$

\ \ For each $j=1,2,\cdots ,d,$ $f_{1}$ maps the angle at $p_{0}$ between $%
\alpha _{j}$ and $\alpha _{j+1}$ ($\alpha _{d+1}=\alpha _{1}$) to a perigon,
and then $f_{2}$ maps the perigons at $\psi (b_{0}),$ $\psi (b_{2}),\cdots ,$
$\psi (b_{2d-2})$ to perigons. In other words, $\psi (b_{0}),$ $\psi
(b_{2}),\cdots ,$ $\psi (b_{2d-2})$ $\in \psi (\varphi ^{-1}(p_{0}))$ are $d$
regular points of $\Sigma _{2}.$ Conversely, because $%
f_{2}(0)=f_{1}(p_{1})=q_{1}\in E_{q},$ $0$ is a special branch point of $%
\Sigma _{2}.$ Since $\varphi \circ \psi ^{-1}$ is homeomorphic from $\Delta
\backslash (\Gamma _{1}\cup \cdots \cup \Gamma _{d})$ onto $\Delta
\backslash (\alpha _{1}\cup \cdots \cup \alpha _{d}),$ $\varphi \circ \psi
^{-1}$ is a bijection from $C(f_{2})\cap \Delta \backslash (\Gamma _{1}\cup
\cdots \cup \Gamma _{d}\cup f_{2}^{-1}(E_{q}))$ onto $C(f_{1})\cap \Delta
\backslash (\alpha _{1}\cup \cdots \cup \alpha _{d}\cup f_{1}^{-1}(E_{q})).$
Therefore, we have%
\begin{eqnarray*}
&&\#C(f_{2})\backslash (\partial \Delta \cup f_{2}^{-1}(E_{q})) \\
&=&\#C(f_{2})\cap \Delta \backslash (\Gamma _{1}\cup \cdots \cup \Gamma
_{d}\cup f_{2}^{-1}(E_{q}))+\#C(f_{2})\cap (\Gamma _{1}\cup \cdots \cup
\Gamma _{d})\backslash f_{2}^{-1}(E_{q}) \\
&=&\#C(f_{1})\cap \Delta \backslash (\alpha _{1}\cup \cdots \cup \alpha
_{d}\cup f_{1}^{-1}(E_{q}))+\#\{0\}\backslash f_{2}^{-1}(E_{q}) \\
&=&\#C(f_{1})\cap \Delta \backslash f_{1}^{-1}(E_{q})-\#C(f_{1})\cap (\alpha
_{1}\cup \cdots \cup \alpha _{d})\backslash f_{1}^{-1}(E_{q})+0 \\
&=&\#C(f_{1})\backslash (\partial \Delta \cup
f_{1}^{-1}(E_{q}))-\#\{p_{0}\}<\#C(f_{1})\backslash (\partial \Delta \cup
f_{1}^{-1}(E_{q})).
\end{eqnarray*}%
In conclusion, $\Sigma _{2}$ is desired indeed.\FRAME{dtbpF}{4.7253in}{%
1.785in}{0pt}{}{}{in-bd,case5.emf}{\special{language "Scientific Word";type
"GRAPHIC";display "USEDEF";valid_file "F";width 4.7253in;height
1.785in;depth 0pt;original-width 13.3354in;original-height 7.5014in;cropleft
"0";croptop "1";cropright "1";cropbottom "0.3187";filename
'in-bd,case5.emf';file-properties "XNPEU";}}

\ \ In Case (5), $\alpha _{i}\cup \alpha _{j}$ divides $\Delta $ into two
Jordan domains $V_{1}$ and $V_{2},$ such that $(-\alpha _{j})+\alpha _{i}$
is a subarc of $\partial V_{1}.$ $(f_{1}|_{\overline{V_{1}}},\overline{V_{1}}%
)$ and $(f_{1}|_{\overline{V_{2}}},\overline{V_{2}})\in \mathbf{F}$ have a
common folded point $p_{0}.$ By Lemma \ref{glue}, subarcs $\alpha
_{i},-\alpha _{j}$ of $\partial V_{1}$ and $\alpha _{j},-\alpha _{i}$ of $%
\partial V_{2}$ could be sewn together respectively, to obtain 
\begin{eqnarray*}
\Sigma _{2} &=&(f_{2},\overline{\Delta })=(f_{1}\circ \psi _{1}^{-1},%
\overline{\Delta })\in \mathbf{F}, \\
\text{ \ }\Sigma _{3} &=&(f_{3},\overline{\Delta })=(f_{1}\circ \psi
_{2}^{-1},\overline{\Delta })\in \mathbf{F}.
\end{eqnarray*}%
Here for $j=1,2,$ $\psi _{j}\in OPL(\overline{V_{j}})$ maps $\overline{V_{j}}%
\backslash (\alpha _{i}\cup \alpha _{j})$ homeomorphically onto $\overline{%
\Delta }\backslash \lbrack 0,1],$ and 
\begin{equation*}
\lbrack 0,1]=(\psi _{1},\alpha _{i})=(\psi _{1},\alpha _{j})=(\psi
_{2},\alpha _{j})=(\psi _{2},\alpha _{i}),
\end{equation*}%
as in the figure for Case (5). We claim that one of $\Sigma _{2}$ and $%
\Sigma _{3}$ is the desired surface in $\mathbf{F}.$

$\ \ \partial \Sigma _{1}$ could be partitioned into:\ \ 
\begin{eqnarray*}
\partial \Sigma _{1} &=&(f_{1},\partial \Delta )=(f_{1},\partial V_{1}\cap
\partial \Delta +\partial V_{2}\cap \partial \Delta ) \\
&=&(f_{1},\partial V_{1}\backslash (\alpha _{i}\cup \alpha
_{j}))+(f_{1},\partial V_{2}\backslash (\alpha _{i}\cup \alpha _{j})) \\
&=&(f_{1}\circ \psi _{1}^{-1},\partial \Delta )+(f_{1}\circ \psi
_{2}^{-1},\partial \Delta )=\partial \Sigma _{2}+\partial \Sigma _{3}.
\end{eqnarray*}%
Then we have $L(\partial \Sigma _{1})=L(\partial \Sigma _{2})+L(\partial
\Sigma _{3}),$ and $\partial \Sigma _{2},\partial \Sigma _{3}$ are closed
subarcs of $\partial \Sigma _{1}.$ For each $w\in S\backslash (\beta
\backslash \{q_{0}\}),$ we have 
\begin{eqnarray}
\overline{n}(f_{2},w)+\overline{n}(f_{3},w) &=&\#f_{1}^{-1}(w)\cap
V_{1}+\#f_{1}^{-1}(w)\cap V_{2}  \notag \\
&=&\#f_{1}^{-1}(w)\cap \Delta =\overline{n}(f_{1},w).  \label{(4)}
\end{eqnarray}%
Each $w\in E_{q}$ is not in $\beta \backslash \{q_{0}\},$ and thus (\ref{(4)}%
) also holds in this case. Then $\overline{n}(\Sigma _{1},E_{q})=\overline{n}%
(\Sigma _{2},E_{q})+\overline{n}(\Sigma _{3},E_{q}),$ and 
\begin{equation*}
A(\Sigma _{1})=A(f_{1},\Delta )=A(f_{1},V_{1})+A(f_{1},V_{2})=A(\Sigma
_{2})+A(\Sigma _{3}).
\end{equation*}%
Consequently, $R(\Sigma _{1})=R(\Sigma _{2})+R(\Sigma _{3}),$ and since $%
L(\partial \Sigma _{1})=L(\partial \Sigma _{2})+L(\partial \Sigma _{3}),$
either $H(\Sigma _{2})\geq H(\Sigma _{1}),$ or $H(\Sigma _{3})\geq H(\Sigma
_{1}).$ We may assume $H(\Sigma _{2})\geq H(\Sigma _{1}).$

\ \ Each component $W$ of $S\backslash \partial \Sigma _{1}$ is contained in
two components $W_{2}$ and $W_{3}$ of $S\backslash \partial \Sigma _{2}$ and 
$S\backslash \partial \Sigma _{3}$ respectively. For each $w\in W\backslash
(\beta \backslash \{q_{0}\}),$ we have $\overline{n}(\Sigma _{1},w)=%
\overline{n}(\Sigma _{2},w)+\overline{n}(\Sigma _{3},w),$ and then $n(\Sigma
_{1},W)=n(\Sigma _{2},W_{2})+n(\Sigma _{3},W_{3}).$ Thus, $sum(\Sigma
_{2})<sum(\Sigma _{1}),$ and $\Sigma _{2}$ is desired indeed.
\end{proof}

Intuitively, in Case (1) and Case (2), we say $\Sigma _{1}$ splits into a
surface $\Sigma _{2}\in \mathbf{F}$ and a closed surface $\Sigma _{3}.$ In
Case (5), we say $\Sigma _{1}$ splits into two surfaces $\Sigma _{2}$ and $%
\Sigma _{3}$ in $\mathbf{F}.$ In Case (3), we say the interior branch point $%
p_{0}$ is moved to the boundary branch point $p_{1}.$ In Case (4), the
interior non-special branch point $p_{0}$ is moved to the special branch
point $0.$ When $\Sigma _{1}$ splits, the covering sum must decrease. Then,
all non-special interior branch points of $\Sigma _{1}$ could be moved
either to the boundary, or to special branch points, until $\Sigma _{1}$
splits into new surfaces.

\begin{corollary}
\label{in-bd,C} For each $\Sigma _{1}=(f_{1},\overline{\Delta })\in \mathbf{F%
},$ there exists $\Sigma _{2}=(f_{2},\overline{\Delta })\in \mathbf{F},$
such that $\Sigma _{2}$ is better than $\Sigma _{1},$ and one of the
following holds.

(1) $sum(\Sigma _{2})<sum(\Sigma _{1}).$

(2) $f_{2}|_{\partial \Delta }=f_{1}|_{\partial \Delta }$, and $C(\Sigma
_{2})\subset \partial \Delta \cup f_{2}^{-1}(E_{q}).$
\end{corollary}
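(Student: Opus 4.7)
The plan is to derive the corollary by iterating Proposition \ref{in to bd} and tracking two monotone integer-valued quantities. Start with $\Sigma_1 \in \mathbf{F}$. If $C(\Sigma_1) \subset \partial\Delta \cup f_1^{-1}(E_q)$, then $\Sigma_2 = \Sigma_1$ already satisfies (2) trivially. Otherwise $\Sigma_1$ has at least one non-special interior branch point, so Proposition \ref{in to bd} applies and produces a surface $\Sigma_1' = (f_1',\overline{\Delta}) \in \mathbf{F}$ which is better than $\Sigma_1$ and falls into one of two cases. In case (i) we have $sum(\Sigma_1') < sum(\Sigma_1)$, so setting $\Sigma_2 = \Sigma_1'$ immediately gives conclusion (1). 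In case (ii) the boundary parametrization is preserved, $f_1'|_{\partial\Delta} = f_1|_{\partial\Delta}$, and the nonnegative integer
\begin{equation*}
N(\Sigma) \overset{def}{=} \#\bigl(C(\Sigma) \setminus (\partial\Delta \cup f^{-1}(E_q))\bigr)
\end{equation*}
satisfies $N(\Sigma_1') < N(\Sigma_1)$.

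If case (i) ever occurs, we halt and output that surface. Otherwise, every step is of type (ii), so we can iterate: apply Proposition \ref{in to bd} to $\Sigma_1'$, producing $\Sigma_1''$, then to $\Sigma_1''$, and so on. At each iteration either $sum$ strictly decreases (and we stop with conclusion (1)) or $N$ strictly decreases while the boundary map is preserved. Since $N$ is a nonnegative integer bounded initially by $N(\Sigma_1)$, the chain of type-(ii) reductions terminates in finitely many steps. Termination can occur for only two reasons: either some step was of type (i), giving conclusion (1); or the final surface $\Sigma_2$ has no non-special interior branch points, i.e.\ $C(\Sigma_2) \subset \partial\Delta \cup f_2^{-1}(E_q)$, and since all intervening steps preserved the boundary, $f_2|_{\partial\Delta} = f_1|_{\partial\Delta}$, giving conclusion (2).

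Transitivity of the relation ``better than'' (noted right after Definition \ref{better}) ensures that the final $\Sigma_2$ is better than $\Sigma_1$, not merely better than the previous iterate. The only slightly delicate point is verifying that conclusion (1) is preserved under transitivity: if at some intermediate step the $sum$ strictly drops below $sum(\Sigma_1)$, then by Condition (1) of Definition \ref{better} all subsequent iterates have $sum$ no larger, so the final surface still has strictly smaller $sum$ than $\Sigma_1$. Since I am free to stop as soon as a type-(i) step occurs, this is automatic; the main (and essentially only) obstacle is organizing the bookkeeping so that at each iteration the hypothesis of Proposition \ref{in to bd} is genuinely available, which is exactly the negation of the termination criterion $C(\Sigma) \subset \partial\Delta \cup f^{-1}(E_q)$.
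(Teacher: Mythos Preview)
Your proof is correct and follows essentially the same approach as the paper: iterate Proposition \ref{in to bd}, stopping either when $sum$ drops (conclusion (1)) or when the integer $N(\Sigma)=\#\bigl(C(\Sigma)\setminus(\partial\Delta\cup f^{-1}(E_q))\bigr)$ hits zero (conclusion (2)), with transitivity of ``better than'' carrying the relation back to $\Sigma_1$. The paper's version is more terse but the argument is the same.
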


The proof is trivial as follows. Applying Proposition \ref{in to bd}
repeatedly to $\Sigma _{1}$ and the resulting surfaces in $\mathbf{F}$,
finally we obtain $\Sigma _{2}\in \mathbf{F}$ which is better than $\Sigma
_{1}.$ This process stops only when all non-special branch points are
removed, or the surface splits in one step. If the surface never splits,
then by Proposition \ref{in to bd}, $f_{2}|_{\partial \Delta
}=f_{1}|_{\partial \Delta }$, and $C(\Sigma _{2})\subset \partial \Delta
\cup f_{2}^{-1}(E_{q}).$

\subsection{ Moving the branch points along the boundary}

In this subsection, we introduce how to move non-special branch points along
the boundary.

\begin{proposition}
\label{bd-bd} Let $\Sigma _{1}=(f_{1},\overline{\Delta })\in \mathbf{F},$
and assume the following hold.

(a) $C(\Sigma _{1})\subset \partial \Delta \cup f_{1}^{-1}(E_{q}).$

(b) $p_{0}\in C(\Sigma _{1})\backslash f_{1}^{-1}(E_{q})\subset \partial
\Delta $ is not a folded point of $\Sigma _{1}$.

(c) $p_{0}\overset{\alpha _{1}^{\prime }}{\rightarrow }p_{1}^{\prime }$ is a
subarc of $\partial \Delta ,$ such that $\alpha _{1}^{\prime \circ }\cap
(C(\Sigma _{1})\cup f_{1}^{-1}(E_{q}))=\emptyset ,$ and $\beta ^{\prime }%
\overset{def}{=}(f_{1},\alpha _{1}^{\prime })$ is a simple subarc of $%
\partial \Sigma _{1}.$

\ Then there exists $\Sigma _{2}=(f_{2},\overline{\Delta })\in \mathbf{F},$
such that $\Sigma _{2}$ is better than $\Sigma _{1},$ and one of the
following holds.

(i) $sum(\Sigma _{2})<sum(\Sigma _{1}).$

(ii) $f_{1}|_{\partial \Delta }=f_{2}|_{\partial \Delta },$ $C(\Sigma
_{2})\subset \partial \Delta \cup f_{2}^{-1}(E_{q}),$ and 
\begin{equation*}
C(\Sigma _{1})\backslash (\{p_{0},p_{1}^{\prime }\}\cup
f_{1}^{-1}(E_{q}))=C(\Sigma _{2})\backslash (\{p_{1}^{\prime }\}\cup
f_{2}^{-1}(E_{q})).
\end{equation*}
\end{proposition}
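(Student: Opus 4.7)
The plan is to mimic the structure of the proof of Proposition \ref{in to bd}, adapted to a branch point $p_0$ lying on the boundary. The first step is to apply Lemma \ref{lifts-2} at $p_0$ along $\alpha_1'$: since $p_0$ is not a folded point and $(\alpha_1')^\circ$ avoids $C(\Sigma_1) \cup f_1^{-1}(E_q)$, this yields a subarc $f_1(p_0) \overset{\beta}{\rightarrow} q_1$ of $\beta'$ together with exactly $d = v_{f_1}(p_0) \geq 2$ lifts $p_0 \overset{\alpha_j}{\rightarrow} p_j$, where $\alpha_1 \subset \alpha_1'$ lies on $\partial \Delta$ and $\alpha_2^\circ, \dots, \alpha_d^\circ \subset \Delta \setminus C(\Sigma_1)$. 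By the discussion following Lemma \ref{lifts-2}, these arcs are pairwise internally disjoint, and locally at $p_0$ they partition a neighborhood of $p_0$ in $\overline{\Delta}$ into $d$ angular sectors.

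The second step is a case analysis on $p_1, \dots, p_d$. If two of these coincide, or if some $p_j \in \partial \Delta$ with $j \geq 2$ (forced by Lemma \ref{lifts-2} whenever $\beta \neq \beta'$), then two of the lifts $\alpha_i, \alpha_j$ bound, together with a subarc of $\partial \Delta$ where needed, a Jordan sub-domain $D \subset \overline{\Delta}$. Applying Lemma \ref{glue} to $(f_1|_{\overline{D}}, \overline{D})$ splits off either a closed surface $\Sigma_3$ or an auxiliary simply-connected piece, while the complement is sewn into $\Sigma_2 \in \mathbf{F}$. This is completely parallel to Cases (1), (2), (5) of Proposition \ref{in to bd}, and combined with Proposition \ref{R<0} it gives $sum(\Sigma_2) < sum(\Sigma_1)$, $H(\Sigma_2) \geq H(\Sigma_1)$, and $\partial \Sigma_2$ a closed subarc of $\partial \Sigma_1$; conclusion (i) holds.

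In the remaining situation, $\beta = \beta'$ (so $p_1 = p_1'$) and $p_2, \dots, p_d$ are pairwise distinct points of $\Delta$, all disjoint from $p_1'$. I would cut $\overline{\Delta}$ along the $d-1$ interior lifts $\alpha_2, \dots, \alpha_d$ by $d-1$ applications of Lemma \ref{cut}, producing an intermediate surface $\Sigma_4 = (f_1 \circ \varphi, \overline{\Delta}) \in \mathbf{F}$ whose boundary carries $2(d-1)$ new subarcs $\gamma_1, \dots, \gamma_{2(d-1)}$ in cyclic order, meeting at $d-1$ copies of $p_0$ on $\partial \Delta$ and terminating at copies of $p_2, \dots, p_d$. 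Then, exactly as in Case (3) of Proposition \ref{in to bd}, I would re-sew these via Lemma \ref{glue} using the shifted pairing $\{\gamma_1, \gamma_2\}, \dots, \{\gamma_{2d-3}, \gamma_{2d-2}\}$, obtaining $\Sigma_2 = (f_2, \overline{\Delta}) = (f_1 \circ \varphi \circ \psi^{-1}, \overline{\Delta}) \in \mathbf{F}$. By arranging $\psi$ to agree with $\varphi$ on $\partial \Delta \setminus (\gamma_1 \cup \dots \cup \gamma_{2(d-1)})$, the identity $f_2|_{\partial \Delta} = f_1|_{\partial \Delta}$ is guaranteed, so $\partial \Sigma_2 = \partial \Sigma_1$ up to reparametrization. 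The shifted sewing redistributes the $d$ sectors at $p_0$ so that $p_0$ becomes a regular boundary point of $\Sigma_2$, while the branch index is transported to $p_1'$; the homeomorphism $\varphi \circ \psi^{-1}$ on $\Delta \setminus (\Gamma_1 \cup \dots \cup \Gamma_{d-1})$ provides a bijection of the non-special branch points outside $\{p_0, p_1'\}$ between the two surfaces.

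The verifications -- $A(\Sigma_2) = A(\Sigma_1)$, $L(\partial \Sigma_2) = L(\partial \Sigma_1)$, $\overline{n}(\Sigma_2, a) \leq \overline{n}(\Sigma_1, a)$ for each $a \in E_q$, and the critical-point identity in (ii) -- all follow by tracking preimages through $\varphi$ and $\psi$, exactly as in Proposition \ref{in to bd}. The main obstacle will be the combinatorial bookkeeping in the generic case: confirming that the shifted pairing produces a simply-connected $\Sigma_2 \in \mathbf{F}$, that $f_2 \in OPL(\overline{\Delta})$ is well-defined and consistent along each sewn pair, and that the branch point at $p_0$ is indeed absorbed while only $p_1'$ may gain branch index. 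A diagrammatic argument using the local normal form at $p_0$, analogous to the figures accompanying Case (3) of Proposition \ref{in to bd}, handles these checks, and the hypothesis $(\alpha_1')^\circ \cap (C(\Sigma_1) \cup f_1^{-1}(E_q)) = \emptyset$ prevents any unintended merging of critical values along the lifts.
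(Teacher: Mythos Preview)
Your overall plan is the paper's own, and your treatment of the splitting cases is adequate at the level of a sketch. The genuine gap is in the generic (non-splitting) case.

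You propose to cut only the $d-1$ interior lifts $\alpha_2,\dots,\alpha_d$, obtaining $2(d-1)$ new boundary arcs $\gamma_1,\dots,\gamma_{2d-2}$, and then to sew the consecutive pairs $\{\gamma_1,\gamma_2\},\dots,\{\gamma_{2d-3},\gamma_{2d-2}\}$. But with the natural boundary order these pairs are exactly $\{\alpha_d,-\alpha_d\},\dots,\{\alpha_2,-\alpha_2\}$: each pair shares a copy of $p_j$, and sewing them simply undoes the cuts, returning $\Sigma_1$. A genuine shift of the pairing among these $2(d-1)$ arcs alone yields only $d-2$ adjacent pairs and leaves two arcs unsewn, so $f_2|_{\partial\Delta}\neq f_1|_{\partial\Delta}$ and a new non-special fold appears at the surviving copy of $p_0$ between $-\alpha_2$ and $\alpha_1$. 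In short, no pairing of the $2(d-1)$ cut arcs by themselves moves the branch to $p_1'$ while preserving the boundary.

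What the paper does---and what your reference to Case~(3) of Proposition~\ref{in to bd} does not carry over---is to include the boundary arc $\alpha_1$ itself in the sewing. One works with $2d-1$ arcs $\gamma_1,\dots,\gamma_{2d-1}$, where $\gamma_{2d-1}$ parametrizes $\alpha_1\subset\partial\Delta$, and sews the shifted pairs $\{\gamma_2,\gamma_3\},\dots,\{\gamma_{2d-2},\gamma_{2d-1}\}$. The remaining arc $\gamma_1$ (a copy of $\alpha_d$) is then placed onto the old position of $\alpha_1$ on $\partial\Delta$ via $\psi(\gamma_1)=\alpha_1$, with the reparametrization $\psi|_{\gamma_1}=h_{d,1}\circ\varphi|_{\gamma_1}$ to ensure $f_2|_{\alpha_1}=f_1|_{\alpha_1}$. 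This is precisely what makes the $p_0$-copy in the last sector a regular boundary point and forces all the endpoints $p_1,\dots,p_d$ to coalesce at $p_1'=p_1$, so that the branch is transported there. Once you insert this step, the rest of your verification goes through as you describe. (A small side remark: cutting at $p_0\in\partial\Delta$ produces $d$ copies of $p_0$, not $d-1$.)
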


\begin{proof}
\ \ Let $d=v_{f_{1}}(p_{0})\geq 2,$ and let $q_{0}$ be the initial point $%
f_{1}(p_{0})$ of $\beta ^{\prime }.$ By Lemma \ref{lifts-2}, there is a
subarc $q_{0}\overset{\beta }{\rightarrow }q_{1}$ of $\beta ^{\prime },$
such that the following hold.

\ (d) $\beta $ has exactly $d$ lifts $p_{0}\overset{\alpha _{1}}{\rightarrow 
}p_{1},\cdots ,$ $p_{0}\overset{\alpha _{d}}{\rightarrow }p_{d}$ from $%
p_{0}, $ arranged in the anticlockwise order at $p_{0}$.

\ (e) $p_{0}\overset{\alpha _{1}}{\rightarrow }p_{1}$ is a subarc of $p_{0}%
\overset{\alpha _{1}^{\prime }}{\rightarrow }p_{1}^{\prime },$ and $\alpha
_{2}^{\circ },\cdots ,\alpha _{d}^{\circ }\subset \Delta .$

\ (f) Either $\beta =\beta ^{\prime }$ (namely $p_{1}=p_{1}^{\prime }$), or
for some $j=2,\cdots ,d,$ $p_{j}\in \partial \Delta .$

\ \ By (c), we have $\beta ^{\prime \circ }\cap E_{q}=\emptyset ,$ and then $%
\left( \alpha _{1}^{\circ }\cup \cdots \cup \alpha _{d}^{\circ }\right) \cap
f_{1}^{-1}(E_{q})=\emptyset .$ Since $\alpha _{1}^{\prime \circ }\cap
C(\Sigma _{1})=\emptyset ,$ and $C(\Sigma _{1})\cap \Delta \subset
f_{1}^{-1}(E_{q})$, we have 
\begin{eqnarray*}
&&\left( \alpha _{1}^{\circ }\cup \alpha _{2}^{\circ }\cup \cdots \cup
\alpha _{d}^{\circ }\right) \cap C(\Sigma _{1}) \\
&\subset &(\alpha _{1}^{\circ }\cap C(\Sigma _{1}))\cup \lbrack \left(
\alpha _{2}^{\circ }\cup \cdots \cup \alpha _{d}^{\circ }\right) \cap
f_{1}^{-1}(E_{q})]=\emptyset .
\end{eqnarray*}%
Let $h_{ij}\in Homeo^{+}(\alpha _{i},\alpha _{j}),$ such that $%
f_{1}|_{\alpha _{i}}=f_{1}|_{\alpha _{j}}\circ h_{ij}$ on $\alpha _{i}.$

\ \ There are four cases to discuss. Case (1): for some $j=2,\cdots ,d,$ $%
p_{j}=p_{1}\in \partial \Delta ,$ and then $p_{1}=p_{1}^{\prime }\in
C(\Sigma _{1}).$ Case (2): $p_{2},\cdots ,p_{d}\in \Delta ,$ and for some
pair $(i,j)$ with $2\leq i<j\leq d,$ $p_{i}=p_{j}.$ Then $p_{i}\in C(\Sigma
_{1})\cap \Delta \subset f_{1}^{-1}(E_{q}),$ $q_{1}=f_{1}(p_{1})\in E_{q},$
and $p_{1}=p_{1}^{\prime }.$ Case (3): for some $j=2,\cdots ,d,$ $p_{j}\in
\partial \Delta \backslash \{p_{1}\}.$ Case (4): $p_{2},\cdots ,p_{d}\in
\Delta $ are distinct, and then by (f), we have $p_{1}=p_{1}^{\prime }$. In
each case, the corresponding figure shows the process to construct the
desired new surface in $\mathbf{F}.$ By Notation \ref{interval}, $[1,0]$ in
the figures means the oriented line segment in $%
\mathbb{C}
$ from $1$ to $0.$ \FRAME{dtbpF}{5.0799in}{1.9796in}{0pt}{}{}{bd-bd,case1.emf%
}{\special{language "Scientific Word";type "GRAPHIC";display
"USEDEF";valid_file "F";width 5.0799in;height 1.9796in;depth
0pt;original-width 13.3354in;original-height 7.5014in;cropleft "0";croptop
"1";cropright "1";cropbottom "0.2549";filename
'bd-bd,case1.emf';file-properties "XNPEU";}}

\ \ In Case (1), the Jordan curve $\alpha _{1}+(-\alpha _{j})$ encloses a
Jordan domain $D$. By Lemma \ref{glue}, $(f_{1}|_{\overline{D}},\overline{D}%
)\in \mathbf{F}$ could be sewn into a closed surface $\Sigma
_{3}=(f_{3},S)=(f_{1}\circ \psi _{S}^{-1},S),$ where $\psi _{S}\in OPL(%
\overline{D})$ maps $\overline{D}$ onto $S,$ and $\psi _{S}|_{\alpha
_{1}}=\psi _{S}|_{\alpha _{j}}\circ h_{1j}$ on $\alpha _{1},$ as in the
figure for Case (1). $U_{2}=\Delta \backslash \overline{D}$ is also a Jordan
domain, and $\Sigma _{2}^{\prime }=(f_{1}|_{\overline{U_{2}}},\overline{U_{2}%
})\in \mathbf{F}$ is isomorphic to some $\Sigma _{2}=(f_{2},\overline{\Delta 
})\in \mathbf{F}.$ We claim $\Sigma _{2}$ is the desired surface.

\ \ Since up to a reparametrization, 
\begin{equation*}
\partial \Sigma _{2}=\partial \Sigma _{2}^{\prime }=(f_{1},\partial \Delta
\backslash \alpha _{1})+(f_{1},\alpha _{j})=(f_{1},\partial \Delta
\backslash \alpha _{1})+(f_{1},\alpha _{1})=\partial \Sigma _{1},
\end{equation*}%
we have $L(\partial \Sigma _{2})=L(\partial \Sigma _{1}).$ For each $w\in
S\backslash \beta ,$ we have%
\begin{eqnarray}
\overline{n}(\Sigma _{1},w) &=&\#f_{1}^{-1}(w)\cap \Delta
=\#f_{1}^{-1}(w)\cap U_{2}+\#f_{1}^{-1}(w)\cap D  \notag \\
&=&\overline{n}(\Sigma _{2}^{\prime },w)+\#f_{3}^{-1}(w)=\overline{n}(\Sigma
_{2},w)+\overline{n}(\Sigma _{3},w).  \label{(5)}
\end{eqnarray}%
Each $w\in E_{q}\backslash \{q_{1}\}$ is not in $\beta ,$ and thus (\ref{(5)}%
) also holds in this case. Then for each component $W$ of $S\backslash
\partial \Sigma _{1}=S\backslash \partial \Sigma _{2},$ $n(\Sigma
_{2},W)+\deg (\Sigma _{3})=n(\Sigma _{1},W),$ and hence $sum(\Sigma
_{2})<sum(\Sigma _{1}).$ In addition, 
\begin{equation*}
A(\Sigma _{1})=A(f_{1},\overline{\Delta })=A(f_{1},U_{2})+A(f_{1},D)=A(%
\Sigma _{2})+A(\Sigma _{3}).
\end{equation*}

\ \ As for $q_{1}$, by Lemma \ref{glue}, 
\begin{eqnarray*}
&&\overline{n}(\Sigma _{2},q_{1})+\overline{n}(\Sigma _{3},q_{1}) \\
&=&\#f_{1}^{-1}(q_{1})\cap U_{2}+\#f_{1}^{-1}(q_{1})\cap
D+\#f_{1}^{-1}(q_{1})\cap \alpha _{j} \\
&=&\#f_{1}^{-1}(q_{1})\cap \Delta +\#\{p_{j}\}=\overline{n}(\Sigma
_{1},q_{1})+1.
\end{eqnarray*}%
In a word, 
\begin{equation*}
\overline{n}(\Sigma _{2},E_{q})+\overline{n}(\Sigma _{3},E_{q})\leq 
\overline{n}(\Sigma _{1},E_{q})+1.
\end{equation*}%
Thus by Proposition \ref{R<0}, we have 
\begin{equation*}
R(\Sigma _{2})\geq R(\Sigma _{1})-4\pi -R(\Sigma _{3})\geq R(\Sigma
_{1})+4\pi .
\end{equation*}%
Therefore, $\Sigma _{2}$ is better than $\Sigma _{1},$ and $sum(\Sigma
_{2})<sum(\Sigma _{1}).$

\ \ Case (2) is almost the same as Case (1) in Proposition \ref{in to bd},
and the discussion is omitted. In this case, as in the figure for Case (2),
we obtain $\Sigma _{2}\in \mathbf{F}$ such that $\Sigma _{2}$ is better than 
$\Sigma _{1},$ and $sum(\Sigma _{2})<sum(\Sigma _{1}).$\FRAME{dtbpF}{4.881in%
}{1.8135in}{0pt}{}{}{bd-bd,case3.emf}{\special{language "Scientific
Word";type "GRAPHIC";display "USEDEF";valid_file "F";width 4.881in;height
1.8135in;depth 0pt;original-width 13.3354in;original-height
7.5014in;cropleft "0";croptop "1";cropright "0.9731";cropbottom
"0.3185";filename 'bd-bd,case3.emf';file-properties "XNPEU";}}

\ \ In Case (3), $\alpha _{j}$ divides $\Delta $ into two Jordan domains $%
V_{1}$ and $V_{2},$ such that $\alpha _{1}\subset \partial V_{1}.$ Then both 
$\Sigma _{2}^{\prime }=(f_{1}|_{\overline{V_{2}}},\overline{V_{2}})$ and $%
\Sigma _{3}^{\prime }=(f_{1}|_{\overline{V_{1}}},\overline{V_{1}})$ are
surfaces in $\mathbf{F}$. $\Sigma _{2}^{\prime }$ is isomorphic to some $%
\Sigma _{2}=(f_{2},\overline{\Delta })\in \mathbf{F}.$ For $\Sigma
_{3}^{\prime }$, by Lemma \ref{glue}, $-\alpha _{j}$ and $\alpha _{1}$ in $%
\partial V_{1}$ could be sewn together, resulting in $\Sigma _{3}=(f_{3},%
\overline{\Delta })=(f_{1}|_{\overline{V_{1}}}\circ \psi ^{-1},\overline{%
\Delta })\in \mathbf{F}.$ Here, $\psi \in OPL(\overline{V_{1}})$ maps $%
\overline{V_{1}}\backslash (\alpha _{1}\cup \alpha _{j})$ homeomorphically
onto $\overline{\Delta }\backslash \lbrack 0,1],$ such that $(\psi ,\alpha
_{1})=(\psi ,\alpha _{j})=[0,1]$, and $\psi |_{\alpha _{1}}=\psi |_{\alpha
_{j}}\circ h_{1j}$ on $\alpha _{1},$ as in the figure for Case (3). We claim
one of $\Sigma _{2}$ and $\Sigma _{3}$ is the desired surface in $\mathbf{F}.
$

\ \ Evidently, $L(\partial \Sigma _{1})=L(\partial \Sigma _{2})+L(\partial
\Sigma _{3}),$ because $\partial \Sigma _{1}$ could be partitioned into 
\begin{eqnarray*}
\partial \Sigma _{1} &=&(f_{1},\partial \Delta )=(f_{1},\partial \Delta \cap
\partial V_{1})+(f_{1},\partial \Delta \cap \partial V_{2}) \\
&=&(f_{1},\partial V_{1}\backslash (\alpha _{1}\cup \alpha _{j}))+\beta
+(f_{1},\partial \Delta \cap \partial V_{2}) \\
&=&(f_{3},\partial \Delta )+(f_{1},\partial V_{2})=\partial \Sigma
_{3}+\partial \Sigma _{2}.
\end{eqnarray*}%
For each $w\in S\backslash (\beta \backslash \{q_{1}\}),$ we have 
\begin{equation}
\overline{n}(\Sigma _{1},w)=\overline{n}(f_{1}|_{V_{1}},w)+\overline{n}%
(f_{1}|_{V_{2}},w)=\overline{n}(\Sigma _{2},w)+\overline{n}(\Sigma _{3},w).
\label{(6)}
\end{equation}%
Especially, this equality holds for each $a_{j}\in E_{q},$ since $%
a_{j}\notin \beta \backslash \{q_{1}\}$. Hence, $\overline{n}(\Sigma
_{1},E_{q})=\overline{n}(\Sigma _{2},E_{q})+\overline{n}(\Sigma _{3},E_{q}),$
and then $R(\Sigma _{1})=R(\Sigma _{2})+R(\Sigma _{3}).$ Since $L(\partial
\Sigma _{1})=L(\partial \Sigma _{2})+L(\partial \Sigma _{3})$, either $%
H(\Sigma _{2})\geq H(\Sigma _{1})$ or $H(\Sigma _{3})\geq H(\Sigma _{1}).$

\ \ We may assume $H(\Sigma _{2})\geq H(\Sigma _{1}).$ Each component $W$ of 
$S\backslash \partial \Sigma _{1}$ is contained in two components $W_{2}$
and $W_{3}$ of $S\backslash \partial \Sigma _{2}$ and $S\backslash \partial
\Sigma _{3}$ respectively, and $n(\Sigma _{1},W)=n(\Sigma
_{2},W_{2})+n(\Sigma _{3},W_{3}).$ Therefore, we have $sum(\Sigma
_{2})<sum(\Sigma _{1})$, and the desired surface $\Sigma _{2}$ is better
than $\Sigma _{1}.$ \FRAME{dtbpF}{4.587in}{1.9796in}{0pt}{}{}{bd-bd,case4.emf%
}{\special{language "Scientific Word";type "GRAPHIC";display
"USEDEF";valid_file "F";width 4.587in;height 1.9796in;depth
0pt;original-width 13.3354in;original-height 7.5014in;cropleft "0";croptop
"1";cropright "1";cropbottom "0.2549";filename
'bd-bd,case4.emf';file-properties "XNPEU";}}

\ \ In Case (4), $p_{2},\cdots ,p_{d}\in \Delta $ are distinct, and $\beta
=\beta ^{\prime }.$ By applying Lemma \ref{cut} $(d-1)$ times, $\overline{%
\Delta }$ could be cut along $\alpha _{2},\cdots ,\alpha _{d},$ and $\alpha
_{2},\cdots ,\alpha _{d}$ split into $(2d-2)$ sequential subarcs $b_{1}%
\overset{\gamma _{1}}{\rightarrow }b_{2},\cdots ,$ $b_{2d-1}\overset{\gamma
_{2d-1}}{\rightarrow }b_{2d}$ of $\partial \Delta ,$ as in the figures for
Case (4). We obtain $\Sigma _{3}=(f_{1}\circ \varphi ,\overline{\Delta })\in 
\mathbf{F}.$ Here, $\varphi \in OPL(\overline{\Delta })$ maps $\overline{%
\Delta }\backslash (\gamma _{1}\cup \cdots \cup \gamma _{2d-1})$
homeomorphically onto $\overline{\Delta }\backslash (\alpha _{1}\cup \cdots
\cup \alpha _{d}),$ and $(\varphi ,\gamma _{1})=\alpha _{d}=(\varphi
,-\gamma _{2}),\cdots ,$ $(\varphi ,\gamma _{2d-1})=\alpha _{1}.$ By Lemma %
\ref{glue}, the following pairs of adjacent subarcs $\{\gamma _{2},\gamma
_{3}\},\cdots ,\{\gamma _{2d-2},\gamma _{2d-1}\}$ of $\partial \Delta $
could be sewn together, resulting in $\Sigma _{2}=(f_{2},\overline{\Delta }%
)=(f_{1}\circ \varphi \circ \psi ^{-1},\overline{\Delta })\in \mathbf{F}$.
Here, $\psi \in OPL(\overline{\Delta })$ maps $\overline{\Delta }\backslash
(\gamma _{1}\cup \cdots \cup \gamma _{2d-1})$ homeomorphically onto $%
\overline{\Delta }\backslash (\alpha _{1}\cup \Gamma _{2}\cup \cdots \cup
\Gamma _{d}),$ where $\Gamma _{2},\cdots ,\Gamma _{d}$ are $(d-1)$ simple
arcs in $\Delta \cup \{p_{1}\},$ with the common terminal point $p_{1}.$ In
addition, $\Gamma _{2}\backslash \{p_{1}\},\cdots ,\Gamma _{d}\backslash
\{p_{1}\}$ are pairwise disjoint, and $(\psi ,\gamma _{1})=\alpha _{1},$ $%
(\psi ,-\gamma _{2})=\Gamma _{2}=(\psi ,\gamma _{3}),\cdots ,$ $(\psi
,-\gamma _{2d-2})=\Gamma _{d}=(\psi ,\gamma _{2d-1}).$ By compositing a
self-homeomorphism of $\overline{\Delta }$ to $\psi ,$ we may assume $\psi
|_{\partial \Delta \backslash (\gamma _{1}\cup \cdots \cup \gamma
_{2d-1})}=\varphi |_{\partial \Delta \backslash (\gamma _{1}\cup \cdots \cup
\gamma _{2d-1})},$ and $\psi |_{\gamma _{1}}=h_{d,1}\circ \varphi |_{\gamma
_{1}}$ on $\gamma _{1}$. Then, we have $f_{1}|_{\partial \Delta
}=f_{2}|_{\partial \Delta },$ and $L(\partial \Sigma _{1})=L(\partial \Sigma
_{2}).$

\ \ The figures (a) to (d) for Case (4) show the process to construct $%
\Sigma _{2}$ from $\Sigma _{1}$. To be more intuitive, $d=v_{f_{1}}(p_{0})$
is assumed to be $3,$ and the domain $\overline{\Delta }$ of $\Sigma _{3}$
is drawn as two shapes in (b), (c). We claim $\Sigma _{2}$ is the desired
surface in $\mathbf{F}.$

\ \ By Lemma \ref{cut} and Lemma \ref{glue}, we have $A(\Sigma
_{2})=A(\Sigma _{4})=A(\Sigma _{1}),$ and for each $w\in S\backslash \beta ,$
\begin{equation*}
\overline{n}(\Sigma _{2},w)=\overline{n}(\Sigma _{3},w)=\overline{n}(\Sigma
_{1},w).
\end{equation*}
Especially, this equality holds for each $a_{j}\in E_{q}\backslash
\{q_{1}\}, $ since $a_{j}\notin \beta .$ Consequently, for each component $W$
of $S\backslash \partial \Sigma _{2}=S\backslash \partial \Sigma _{1}$, we
have $n(\Sigma _{2},W)=n(\Sigma _{1},W),$ and then $sum(\Sigma
_{2})=sum(\Sigma _{1}).$ As for $q_{1},$ $\psi \circ \varphi ^{-1}$ is a
bijection from $f_{1}^{-1}(q_{1})\cap \Delta \backslash (\alpha _{2}\cup
\cdots \cup \alpha _{d})$ to $f_{2}^{-1}(q_{1})\cap \Delta \backslash
(\Gamma _{2}\cup \cdots \cup \Gamma _{d}).$ Then, 
\begin{eqnarray*}
\overline{n}(\Sigma _{2},q_{1}) &=&\#f_{2}^{-1}(q_{1})\cap \Delta \backslash
(\Gamma _{2}\cup \cdots \cup \Gamma _{d})+\#f_{2}^{-1}(q_{1})\cap \Delta
\cap (\Gamma _{2}\cup \cdots \cup \Gamma _{d}) \\
&=&\#f_{1}^{-1}(q_{1})\cap \Delta \backslash (\alpha _{2}\cup \cdots \cup
\alpha _{d})+0 \\
&=&\#f_{1}^{-1}(q_{1})\cap \Delta -\#\{p_{2},\cdots ,p_{d}\}=\overline{n}%
(\Sigma _{1},q_{1})-(d-1).
\end{eqnarray*}%
Consequently,, we obtain $\overline{n}(\Sigma _{2},E_{q})\leq \overline{n}%
(\Sigma _{1},E_{q})$, $H(\Sigma _{2})\geq H(\Sigma _{1}),$ and $\Sigma _{2}$
is better than $\Sigma _{1}.$

\ \ For each $j=1,2,\cdots ,d-1,$ $f_{1}$ maps the angle at $p_{0}$ between $%
\alpha _{j}$ and $\alpha _{j+1}$ to a perigon at $f_{1}(p_{0})$, and then $%
f_{2}$ maps the perigons at $\psi (b_{3}),$ $\psi (b_{5}),\cdots ,$ $\psi
(b_{2d-1})$ to perigons. Furthermore, for each $w\in S\backslash \beta $
near $f_{1}(p_{0}),$ there is at most one preimage $f_{1}^{-1}(w)$ near $%
p_{0}$ between $\alpha _{d}$ and $\partial \Delta \backslash \alpha _{1},$
and then there is at most one preimage $f_{2}^{-1}(w)$ near $p_{0}.$ In
conclusion, $\psi (b_{1})=p_{0},$ $\psi (b_{3}),\cdots ,$ $\psi (b_{2d-1})$
are $d$ regular points of $\Sigma _{2}.$ Conversely, $p_{1}=p_{1}^{\prime
}\in \partial \Delta $ is the only new branch point of $\Sigma _{2}.$ By
Lemma \ref{cut} and Lemma \ref{glue}, $\psi \circ \varphi ^{-1}$ maps $%
C(\Sigma _{1})\backslash \{p_{0},p_{1}^{\prime }\}=C(\Sigma _{1})\backslash
(\alpha _{1}\cup \cdots \cup \alpha _{d})$ bijectively onto $C(\Sigma
_{2})\backslash \{p_{1}^{\prime }\}=C(\Sigma _{2})\backslash (\alpha
_{1}\cup \Gamma _{2}\cup \cdots \cup \Gamma _{d}).$ Because $\psi \circ
\varphi ^{-1}|_{\partial \Delta \backslash \alpha _{1}^{\circ }}=Id$ and 
\begin{equation*}
C(\Sigma _{1})\backslash (\{p_{0},p_{1}^{\prime }\}\cup
f_{1}^{-1}(E_{q}))\subset \partial \Delta \backslash \alpha _{1}^{\circ },
\end{equation*}%
we have 
\begin{equation*}
C(\Sigma _{1})\backslash (\{p_{0},p_{1}^{\prime }\}\cup
f_{1}^{-1}(E_{q}))=C(\Sigma _{2})\backslash (\{p_{1}^{\prime }\}\cup
f_{2}^{-1}(E_{q}))\subset \partial \Delta .
\end{equation*}%
Recall $p_{1}^{\prime }\in \partial \Delta ,$ and then $C(\Sigma
_{2})\subset \partial \Delta \cup f_{2}^{-1}(E_{q}),$ which implies $\Sigma
_{2}$ is the desired surface in $\mathbf{F}$ indeed.
\end{proof}

In Case (1) and Case (2), we say $\Sigma _{1}\in \mathbf{F}$ splits into $%
\Sigma _{2}\in \mathbf{F}$ and a closed surface $\Sigma _{3}$. In Case (3), $%
\Sigma _{1}\in \mathbf{F}$ splits into two surfaces in $\mathbf{F}$. In Case
(4), the non-special boundary branch point $p_{0}$ is moved to $p_{1}.$ So,
a non-special boundary branch point $p$ of $\Sigma \in \mathbf{F}$ could be
moved along $\partial \Sigma $ by applying Proposition \ref{bd-bd}
repeatedly, until $\Sigma $ splits, or the branch point $p$ is moved to a
special boundary branch point. The only trouble case is that $\Sigma $ never
splits, and $\partial \Sigma \cap E_{q}=\emptyset .$ The following corollary
to remove non-special boundary branch points is based on the previous idea.

\begin{corollary}
\label{bd-bd,C} Let $\Sigma _{1}=(f_{1},\overline{\Delta })\in \mathbf{F}$,
which has no non-special folded points.

When $\partial \Sigma _{1}\cap E_{q}\neq \emptyset ,$ there exists $\Sigma
_{2}=(f_{2},\overline{\Delta })\in \mathbf{F},$ such that $\Sigma _{2}$ is
better than $\Sigma _{1},$ and either (1) $sum(\Sigma _{2})<sum(\Sigma
_{1}), $ or (2) $f_{2}|_{\partial \Delta }=f_{1}|_{\partial \Delta }$ and $%
CV(\Sigma _{2})\subset E_{q}$.

When $\partial \Sigma _{1}\cap E_{q}=\emptyset ,$ for each $p^{\prime }\in
\partial \Delta ,$ there exists $\Sigma _{2}=(f_{2},\overline{\Delta })\in 
\mathbf{F},$ such that $\Sigma _{2}$ is better than $\Sigma _{1},$ and
either (i) $sum(\Sigma _{2})<sum(\Sigma _{1}),$ or (ii) $f_{2}|_{\partial
\Delta }=f_{1}|_{\partial \Delta }$ and $C(\Sigma _{2})\subset
f_{2}^{-1}(E_{q})\cup \{p^{\prime }\}.$
\end{corollary}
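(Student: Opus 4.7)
The plan is to combine Corollary \ref{in-bd,C} with iterated applications of Proposition \ref{bd-bd}. Phase 1 removes non-special \emph{interior} branch points and reduces to the setting where $C(\Sigma)\subset \partial\Delta\cup f^{-1}(E_q)$; Phase 2 moves the remaining non-special \emph{boundary} branch points along $\partial\Delta$ until they are absorbed at special points (Case A) or coalesce at the target point $p'$ (Case B). At every stage, either $sum$ strictly decreases---giving conclusion (1) or (i) immediately---or the transformation preserves $f|_{\partial\Delta}$ and improves the branch-point structure.

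First I would apply Corollary \ref{in-bd,C} to $\Sigma_1$, obtaining $\Sigma'=(f',\overline{\Delta})$ better than $\Sigma_1$. If $sum(\Sigma')<sum(\Sigma_1)$, set $\Sigma_2=\Sigma'$ and we are done. Otherwise $f'|_{\partial\Delta}=f_1|_{\partial\Delta}$ and $C(\Sigma')\subset \partial\Delta\cup f'^{-1}(E_q)$; since foldedness of a boundary point is equivalent to the failure of local injectivity of $f|_{\partial\Delta}$ at that point, $\Sigma'$ inherits from $\Sigma_1$ the property of having no non-special folded points. Rename $\Sigma'$ to $\Sigma_1$ and observe that Proposition \ref{bd-bd} now applies at every non-special branch point.

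For Phase 2, write $B(\Sigma)=C(\Sigma)\setminus f^{-1}(E_q)\subset \partial\Delta$. In Case A fix a point $p^*\in \partial\Delta$ with $f(p^*)\in E_q$; in Case B put $p^*:=p'$. While $B(\Sigma)\neq \emptyset$, pick $p_0\in B(\Sigma)$ and choose the subarc $p_0\overset{\alpha'_1}{\to} p'_1$ of $\partial\Delta$ oriented toward $p^*$ that is maximal subject to $\alpha'^{\circ}_1\cap(C(\Sigma)\cup f^{-1}(E_q))=\emptyset$ and $(f,\alpha'_1)$ simple; existence of a non-degenerate such arc follows from Lemma \ref{lifts-2} combined with the no-non-special-folded-point hypothesis, which guarantees local injectivity of $f|_{\partial\Delta}$ at $p_0$. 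Apply Proposition \ref{bd-bd}: case (i) yields conclusion (1)/(i); case (ii) gives the identity
\[
C(\Sigma_2)\setminus(\{p'_1\}\cup f_2^{-1}(E_q))=C(\Sigma)\setminus(\{p_0,p'_1\}\cup f^{-1}(E_q)),
\]
showing that the non-special branch point at $p_0$ has been removed, and if $p'_1\in C(\Sigma)\cup f^{-1}(E_q)$ then $|B|$ strictly decreases. Otherwise the maximality of $\alpha'_1$ forces $p'_1$ to lie strictly past the next landmark on the way to $p^*$, so the branch point has been translated forward by at least one landmark. Since $\partial\Delta$ contains only finitely many landmarks in $C(\Sigma)\cup f^{-1}(E_q)$, finitely many iterations reduce either to a split (conclusion (1)/(i)), or to $B(\Sigma_2)=\emptyset$ in Case A (so $CV(\Sigma_2)\subset E_q$), or to $B(\Sigma_2)\subset\{p'\}$ in Case B (so $C(\Sigma_2)\subset f_2^{-1}(E_q)\cup\{p'\}$).

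The main obstacle is the bookkeeping for termination: in the ``bad'' subcase where $p'_1\notin C(\Sigma)\cup f^{-1}(E_q)$, $|B|$ need not strictly decrease in one step, so one must exhibit a monotone quantity---such as the arc-length distance along $\partial\Delta$ from the currently-moving branch point to $p^*$, or a lexicographic order on (number of landmarks crossed, position along $\partial\Delta$)---that strictly decreases at every non-splitting step. Combined with the finiteness of landmarks and the fact that $sum(\Sigma)$ is a positive integer (so only finitely many splits can occur), this forces the procedure to halt, and inspection of the halting configuration yields the claimed dichotomy in both Case A and Case B.
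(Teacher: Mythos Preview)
Your overall strategy---first invoke Corollary \ref{in-bd,C}, then iterate Proposition \ref{bd-bd} to push the remaining non-special boundary branch points toward a fixed target $p^*$---is exactly the paper's approach. The real issue is your termination argument in Phase 2, and here there is a genuine gap.

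Your claim that ``the maximality of $\alpha'_1$ forces $p'_1$ to lie strictly past the next landmark'' is false. If maximality of $\alpha'_1$ is triggered by the simplicity constraint (i.e.\ $(f,\alpha'_1)$ cannot be extended and remain simple) rather than by hitting a point of $C(\Sigma)\cup f^{-1}(E_q)$, then $p'_1$ lies \emph{before} the next landmark, not past it. Your fallback proposal---arc-length distance to $p^*$---does decrease strictly, but it is a real number with no a priori uniform lower bound on the step size, so strict decrease alone does not yield finite termination. The lexicographic idea is closer, but you do not specify a well-founded order.

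The paper resolves this cleanly by \emph{pre-partitioning}. Given a non-special boundary branch point $p_0$ and the target $p^*$, it fixes once and for all a partition
\[
p_0\overset{\alpha_1}{\rightarrow}p_1\overset{\alpha_2}{\rightarrow}\cdots\overset{\alpha_m}{\rightarrow}p_m=p^*
\]
of the subarc of $\partial\Delta$, chosen so that each $(f,\alpha_j)$ is simple and $\alpha_j^\circ\cap C(\Sigma)=\emptyset$. Since every non-splitting application of Proposition \ref{bd-bd} preserves $f|_{\partial\Delta}$, the simplicity of each $(f,\alpha_j)$ persists; and since the new branch point always lands at the partition vertex $p_j$, the condition $\alpha_k^\circ\cap C(\Sigma)=\emptyset$ persists for $k>j$. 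Thus one advances exactly one segment per step, terminating in at most $m$ steps. This inner loop strictly decreases $|B|$ (the branch point at $p_0$ is absorbed at $p^*$, which is either in $f^{-1}(E_q)$ or equals $p'$), and an outer loop on $|B|$ finishes the job.
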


\begin{proof}
\ \ By Corollary \ref{in-bd,C}, there exists $\Sigma _{4}=(f_{4},\overline{%
\Delta })\in \mathbf{F}$ such that $\Sigma _{4}$ is better than $\Sigma
_{1}, $ and either (a) $sum(\Sigma _{4})<sum(\Sigma _{1}),$ or (b) $C(\Sigma
_{4})\subset \partial \Delta \cup f_{4}^{-1}(E_{q})$ and $f_{4}|_{\partial
\Delta }=f_{1}|_{\partial \Delta }$. Corollary \ref{bd-bd,C} holds in Case
(a), and throughout we only concern Case (b). Since $\Sigma _{1}$ has no
non-special folded points, $\Sigma _{4}$ also has none in Case (b).

\ \ Firstly we assume $\partial \Sigma _{4}\cap E_{q}\neq \emptyset .$
Corollary \ref{bd-bd,C} holds when $CV(\Sigma _{4})\subset E_{q},$ and
throughout we assume $CV(\Sigma _{4})\nsubseteq E_{q}.$ There are always two
points $p_{0}\in \partial \Delta \cap C(\Sigma _{4})\backslash
f_{4}^{-1}(E_{q})$ and $p_{\ast }\in \partial \Delta \cap f_{4}^{-1}(E_{q}),$
such that the subarc $p_{0}\overset{\gamma }{\rightarrow }p_{\ast }$ of $%
\partial \Delta $ satisfies $f_{4}(\gamma ^{\circ })\cap E_{q}=\emptyset $. $%
\gamma $ could be partitioned into 
\begin{equation*}
\gamma =p_{0}\overset{\alpha _{1}}{\rightarrow }p_{1}\overset{\alpha _{2}}{%
\rightarrow }p_{2}\overset{\alpha _{3}}{\rightarrow }\cdots p_{m-1}\overset{%
\alpha _{m}}{\rightarrow }p_{m},
\end{equation*}%
with $p_{m}=p_{\ast },$ such that for each $\alpha _{j}$, $f_{4}(\alpha
_{j})=\beta _{j}$ is simple, and $\alpha _{j}^{\circ }\cap C(\Sigma
_{4})=\emptyset .$ Then by Proposition \ref{bd-bd}, there exists $\Sigma
_{5}=(f_{5},\overline{\Delta })\in \mathbf{F},$ such that $\Sigma _{5}$ is
better than $\Sigma _{4},$ and either (c) $sum(\Sigma _{5})<sum(\Sigma
_{4}), $ or (d) $f_{5}|_{\partial \Delta }=f_{4}|_{\partial \Delta },$ and 
\begin{equation*}
C(\Sigma _{4})\backslash (\{p_{0},p_{1}\}\cup f_{4}^{-1}(E_{q}))=C(\Sigma
_{5})\backslash (\{p_{1}\}\cup f_{5}^{-1}(E_{q})).
\end{equation*}%
Corollary \ref{bd-bd,C} holds in Case (c), and throughout we only concern
Case (d). Then we have 
\begin{equation*}
C(\Sigma _{5})\cap \partial \Delta \subset \lbrack C(\Sigma _{4})\cap
\partial \Delta \backslash \{p_{0}\}]\cup \{p_{1}\},\text{ }C(\Sigma
_{5})\subset \partial \Delta \cup f_{5}^{-1}(E_{q}).
\end{equation*}%
For $j=2,3,\cdots ,m,$ $f_{5}(\alpha _{j})=f_{4}(\alpha _{j})=\beta _{j}$ is
still simple, and $\alpha _{j}^{\circ }\cap C(\Sigma _{5})=\emptyset .$
Intuitively speaking, the non-special boundary branch point $p_{0}$ is moved
to $p_{1}.$

\ \ Thus, Proposition \ref{bd-bd} could be applied repeatedly to $\Sigma
_{5} $ and to the new resulting surfaces. In each step, either the surface
splits, leading to a smaller covering sum, or the boundary branch point $%
p_{j}$ is moved to $p_{j+1}.$ In at most $m$ steps, we obtain $\Sigma
_{3}=(f_{3},\overline{\Delta })\in \mathbf{F},$ such that $\Sigma _{3}$ is
better than $\Sigma _{4},$ and either (e) $sum(\Sigma _{3})<sum(\Sigma
_{4}), $ or (f) $f_{3}|_{\partial \Delta }=f_{4}|_{\partial \Delta },$ $%
C(\Sigma _{3})\subset \partial \Delta \cup f_{3}^{-1}(E_{q}),$ and 
\begin{equation*}
C(\Sigma _{3})\cap \partial \Delta \subset \lbrack C(\Sigma _{4})\cap
\partial \Delta \backslash \{p_{0}\}]\cup \{p_{m}\}.
\end{equation*}%
Corollary \ref{bd-bd,C} holds in Case (e), and throughout we only concern
Case (f). Since $p_{m}\in f_{4}^{-1}(E_{q})\cap \partial \Delta ,$ we have 
\begin{eqnarray*}
\#C(\Sigma _{3})\backslash f_{3}^{-1}(E_{q}) &=&\#C(\Sigma _{3})\cap
\partial \Delta \backslash (f_{3}^{-1}(E_{q})\cap \partial \Delta ) \\
&\leq &\#[(C(\Sigma _{4})\cap \partial \Delta \backslash \{p_{0}\})\cup
\{p_{m}\}]\backslash (f_{4}^{-1}(E_{q})\cap \partial \Delta ) \\
&\leq &\#C(\Sigma _{4})\cap \partial \Delta \backslash \lbrack \{p_{0}\}\cup
f_{4}^{-1}(E_{q})] \\
&=&\#C(\Sigma _{4})\cap \partial \Delta \backslash f_{4}^{-1}(E_{q})-1.
\end{eqnarray*}%
In other words, we have $\#C(\Sigma _{3})\backslash
f_{3}^{-1}(E_{q})<\#C(\Sigma _{4})\backslash f_{4}^{-1}(E_{q}).$ In fact,
all the non-special boundary branch points between $p_{0}$ and $p_{m}$ are
moved to the special branch point $p_{m}.$

\ \ This process to reduce the number of non-special boundary branch points,
could also be applied repeatedly to $\Sigma _{3}$ and to the new resulting
surfaces, until the covering sum decreases, or all branch points are
special. Finally, we obtain the desired surface $\Sigma _{2}=(f_{2},%
\overline{\Delta })\in \mathbf{F},$ such that $\Sigma _{2}$ is better than $%
\Sigma _{4}$ and $\Sigma _{1},$ and either (1) $sum(\Sigma _{2})<sum(\Sigma
_{4})\leq sum(\Sigma _{1}),$ or (2) $CV(\Sigma _{2})\subset E_{q},$ and $%
f_{2}|_{\partial \Delta }=f_{4}|_{\partial \Delta }=f_{1}|_{\partial \Delta
}.$

\ \ Secondly, we assume $\partial \Sigma _{1}\cap E_{q}=\emptyset .$ By
Corollary \ref{in-bd,C}, there exists $\Sigma _{4}=(f_{4},\overline{\Delta }%
)\in \mathbf{F}$, such that $\Sigma _{4}$ is better than $\Sigma _{1},$ and
either (a) $sum(\Sigma _{4})<sum(\Sigma _{1}),$ or (b) $C(\Sigma
_{4})\subset \partial \Delta \cup f_{4}^{-1}(E_{q})$ and $f_{4}|_{\partial
\Delta }=f_{1}|_{\partial \Delta }$. We only concern Case (b), and we may
assume there exists $p_{0}\in C(\Sigma _{4})\backslash
(f_{4}^{-1}(E_{q})\cup \{p^{\prime }\})\subset \partial \Delta $, since
Corollary \ref{bd-bd,C} holds when such $p_{0}$ doesn't exist.

\ The subarc $p_{0}\overset{\partial \Delta }{\rightarrow }p^{\prime }$ of $%
\partial \Delta $ could be partitioned into 
\begin{equation*}
p_{0}\overset{\alpha _{1}}{\rightarrow }p_{1}\overset{\alpha _{2}}{%
\rightarrow }p_{2}\overset{\alpha _{3}}{\rightarrow }\cdots p_{m-1}\overset{%
\alpha _{m}}{\rightarrow }p_{m}
\end{equation*}%
with $p^{\prime }=p_{m}$, such that for each $\alpha _{j},$ $f_{4}(\alpha
_{j})=\beta _{j}$ is simple, and $\alpha _{j}^{\circ }\cap
C(f_{4})=\emptyset .$ By the previous method, there exists $\Sigma
_{3}=(f_{3},\overline{\Delta })\in \mathbf{F},$ such that $\Sigma _{3}$ is
better than $\Sigma _{4}$, and either (g) $sum(\Sigma _{3})<sum(\Sigma
_{4}), $ or (h) $f_{3}|_{\partial \Delta }=f_{4}|_{\partial \Delta }$ and 
\begin{equation*}
C(\Sigma _{3})\backslash (\{p^{\prime }\}\cup f_{3}^{-1}(E_{q}))\subset
C(\Sigma _{4})\backslash (\{p_{0}\}\cup f_{4}^{-1}(E_{q}))\subset \partial
\Delta .
\end{equation*}%
Corollary \ref{bd-bd,C} holds in Case (g), and throughout we only concern
Case (h).

\ \ This process reduces the number of non-special boundary branch points
other than $p^{\prime },$ which could be applied repeatedly to $\Sigma _{3}$
and to new resulting surfaces, until the surface splits, or $p^{\prime }$ is
the unique non-special boundary branch point. Finally, we obtain $\Sigma
_{2}=(f_{2},\overline{\Delta })\in \mathbf{F},$ such that $\Sigma _{2}$ is
better than $\Sigma _{1},$ and either (i) $sum(\Sigma _{2})<sum(\Sigma
_{4})\leq sum(\Sigma _{1}),$ or (ii) $f_{2}|_{\partial \Delta
}=f_{4}|_{\partial \Delta }=f_{1}|_{\partial \Delta }$ and $C(\Sigma
_{2})\subset f_{2}^{-1}(E_{q})\cup \{p^{\prime }\}.$
\end{proof}

\section{The proof of the main theorem}

In this section, we prove the main theorem for each $\Sigma \in \mathbf{F}$,
even if $\partial \Sigma \cap E_{q}=\emptyset .$

\subsection{Moving the branch points to an interior special branch point}

When $\partial \Sigma \cap E_{q}=\emptyset ,$ moving non-special branch
points along $\partial \Sigma $ may not achieve $CV(\Sigma )\subset E_{q}.$
Sometimes the boundary branch points could be moved to an interior special
point as follows.

\begin{proposition}
\label{bd-in,P} Let $\Sigma _{1}=(f_{1},\overline{\Delta })\in \mathbf{F},$
with the following conditions.

(a) $\partial \Sigma _{1}\cap E_{q}=\emptyset ,$ and $\Sigma _{1}$ has no
folded points.

(b) $f_{1}$ maps a subarc $\Gamma ^{\prime }$ of $\partial \Delta $
homeomorphically onto an admissible subarc $\Gamma =(f_{1},\Gamma ^{\prime
}) $ of $\partial \Sigma _{1},$ and $U$ is the component of $S\backslash
\partial \Sigma _{1}$ on the left hand side of $\Gamma .$ (See Lemma \ref%
{finite components}.)

(c) $a_{1}\in E_{q}\cap U.$

Then there exists $\Sigma _{2}=(f_{2},\overline{\Delta })\in \mathbf{F},$
such that $\Sigma _{2}$ is better than $\Sigma _{1},$ and one of the
following holds.

(i) $sum(\Sigma _{2})<sum(\Sigma _{1}).$

(ii) $CV(\Sigma _{2})\subset E_{q}$ and $f_{2}|_{\partial \Delta
}=f_{1}|_{\partial \Delta }$.
\end{proposition}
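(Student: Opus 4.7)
The plan is a two-stage construction: first, reduce $\Sigma_{1}$ via Corollary \ref{bd-bd,C} to a surface whose only possible non-special branch point is a prescribed point $p^{\ast }\in \Gamma^{\prime \circ }$, and second, transport this boundary branch point to the interior special point $a_{1}$ by a lift-cut-sew argument in the spirit of Case (4) of Proposition \ref{in to bd}.

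For the reduction, pick $p^{\ast }\in \Gamma^{\prime \circ }$ with $f_{1}(p^{\ast })\notin E_{q}$ (available because $f_{1}|_{\Gamma^{\prime }}$ is a homeomorphism onto $\Gamma $ and $\Gamma \cap E_{q}=\emptyset $), and invoke the second part of Corollary \ref{bd-bd,C}. This yields $\Sigma_{3}=(f_{3},\overline{\Delta })\in \mathbf{F}$ better than $\Sigma_{1}$, with either $sum(\Sigma_{3})<sum(\Sigma_{1})$ (giving (i) at once) or $f_{3}|_{\partial \Delta }=f_{1}|_{\partial \Delta }$ and $C(\Sigma_{3})\subset f_{3}^{-1}(E_{q})\cup \{p^{\ast }\}$. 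In the latter case, if $p^{\ast }\notin C(\Sigma_{3})$ then $CV(\Sigma_{3})\subset E_{q}$ and (ii) already holds; otherwise set $d:=v_{f_{3}}(p^{\ast })\geq 2$. Since the boundary map is unchanged, $\Sigma_{3}$ inherits all hypotheses of the proposition with the same $U,\Gamma ,a_{1}$ and no folded points.

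Since Lemma \ref{finite components} gives $U$ as a simply-connected component of $S\setminus \partial \Sigma_{3}$ with $q_{0}:=f_{3}(p^{\ast })\in \Gamma ^{\circ }\subset \partial U$ and $a_{1}\in U$, I would choose a simple piecewise analytic arc $q_{0}\overset{\beta }{\rightarrow }a_{1}$ with $\beta ^{\circ }\subset U$ and $\beta ^{\circ }\cap E_{q}=\emptyset $; then $\beta ^{\circ }\cap CV(\Sigma_{3})=\emptyset $ and $\beta $ lies on the left of $\partial \Sigma_{3}$ near $q_{0}$, so Lemma \ref{lift-3} supplies $d$ simple lifts $p^{\ast }\overset{\alpha _{j}}{\rightarrow }p_{j}$ with $\alpha _{j}^{\circ }\subset \Delta \setminus C(\Sigma_{3})$, arranged anticlockwise at $p^{\ast }$, with each $p_{j}\in \Delta $ because $a_{1}\in U$ forces $f_{3}^{-1}(a_{1})\cap \partial \Delta =\emptyset $. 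If some $p_{i}=p_{j}$, an argument parallel to Cases (1)/(2) of Proposition \ref{in to bd} applies: the Jordan curve $\alpha _{i}+(-\alpha _{j})$ bounds a Jordan disk (touching $\partial \Delta $ only at $p^{\ast }$) whose interior sews into a closed surface $\Sigma_{5}$ by Lemma \ref{glue}(B), while the complement sews into $\Sigma_{2}\in \mathbf{F}$ with $sum(\Sigma_{2})\leq sum(\Sigma_{3})-\deg (\Sigma_{5})<sum(\Sigma_{1})$, giving (i).

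The principal case is when $p_{1},\ldots ,p_{d}$ are distinct. The plan is to cut $\overline{\Delta }$ along each $\alpha _{j}$ by $d$ successive applications of Lemma \ref{cut}, producing a simply-connected cut surface whose boundary, in cyclic order, reads
\[
x_{0}\rightarrow \partial \Delta \rightarrow x_{d}\rightarrow \alpha _{d}\rightarrow p_{d}\rightarrow -\alpha _{d}\rightarrow x_{d-1}\rightarrow \alpha _{d-1}\rightarrow \cdots \rightarrow p_{1}\rightarrow -\alpha _{1}\rightarrow x_{0},
\]
where $x_{0},\ldots ,x_{d}$ are the $d+1$ corner copies of $p^{\ast }$ created by the cuts. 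Then sew the $d-1$ adjacent pairs $\{-\alpha _{j+1},\alpha _{j}\}$ at $x_{j}$ for $j=1,\ldots ,d-1$ via Lemma \ref{glue}(A), using the natural parameter-matching identification $h:\alpha _{j+1}\rightarrow \alpha _{j}$ (both being lifts of $\beta $ makes $f_{3}\circ h=f_{3}$); these collapse all $p_{j}$'s into one interior point $P$ with $f_{2}(P)=a_{1}$ and turn $x_{1},\ldots ,x_{d-1}$ into interior points. A final Lemma \ref{glue}(A) sewing of the residual pair $\{\alpha _{d},-\alpha _{1}\}$ at $P$ then identifies $x_{d}$ with $x_{0}$, turns $P$ into an interior point, and restores the boundary to $\partial \Delta $ unchanged. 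The resulting $\Sigma_{2}\in \mathbf{F}$ satisfies $f_{2}|_{\partial \Delta }=f_{1}|_{\partial \Delta }$, $A(\Sigma_{2})=A(\Sigma_{1})$, $\overline{n}(\Sigma_{2},a_{1})=\overline{n}(\Sigma_{3},a_{1})-(d-1)$, and $\overline{n}(\Sigma_{2},a)=\overline{n}(\Sigma_{3},a)$ for $a\in E_{q}\setminus \{a_{1}\}$, so $H(\Sigma_{2})>H(\Sigma_{1})$ and $\Sigma_{2}$ is better than $\Sigma_{1}$; with $P$ special and $p^{\ast }$ now regular, $CV(\Sigma_{2})\subset E_{q}$, giving (ii). The main obstacle will be the combinatorial bookkeeping of Case B: confirming the displayed cyclic order after the $d$ cuts, verifying that each of the $d$ gluings meets the orientation and $f_{3}$-compatibility required by Lemma \ref{glue}(A), and checking that $P$ emerges with multiplicity exactly $d$ while $p^{\ast }$ reverts to a regular boundary point. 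Case A is also delicate, because its Jordan disk pinches $\partial \Delta $ at $p^{\ast }$, so the relevant sewings apply only after passing to the abstract-surface quotient of Remark \ref{model}.
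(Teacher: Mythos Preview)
Your proposal is correct and follows essentially the same route as the paper's proof: reduce via Corollary~\ref{bd-bd,C} so that the only possible non-special branch point sits at a chosen $p^{\ast}\in\Gamma'^{\circ}$, lift a path $\beta\subset\overline U$ from $f(p^{\ast})$ to $a_1$ by Lemma~\ref{lift-3}, split off a closed surface if two lift-endpoints coincide, and otherwise perform the cut--resew surgery transporting the branch point to an interior preimage of $a_1$. The paper prefaces the reduction with an explicit call to Corollary~\ref{in-bd,C}, but since Corollary~\ref{bd-bd,C} already invokes~\ref{in-bd,C} internally, your single-step reduction is equivalent; the only other differences are labeling conventions (clockwise vs.\ anticlockwise ordering of the lifts) and that your colliding-endpoint case is topologically the analogue of Case~(1) rather than Case~(2) of Proposition~\ref{in to bd}, since the Jordan curve $\alpha_i+(-\alpha_j)$ touches $\partial\Delta$ at the single point $p^{\ast}$.
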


\begin{proof}
\ \ By Corollary \ref{in-bd,C}, there exists $\Sigma _{3}=(f_{3},\overline{%
\Delta })\in \mathbf{F},$ such that $\Sigma _{3}$ is better than $\Sigma
_{1},$ and either (A) $sum(\Sigma _{3})<sum(\Sigma _{1}),$ or (B) $C(\Sigma
_{3})\subset \partial \Delta \cup f_{3}^{-1}(E_{q})$ and $f_{3}|_{\partial
\Delta }=f_{1}|_{\partial \Delta }$. Proposition \ref{bd-in,P} holds
trivially in Case (A). Throughout we only concern Case (B), and then the
conditions (a), (b), (c) also hold for $\Sigma _{3}.$

\ \ Fix $p_{0}\in \Gamma ^{\prime \circ }.$ By Corollary \ref{bd-bd,C},
there exists $\Sigma _{4}=(f_{4},\overline{\Delta })\in \mathbf{F},$ such
that $\Sigma _{4}$ is better than $\Sigma _{3},$ and either (C) $sum(\Sigma
_{4})<sum(\Sigma _{3})\leq sum(\Sigma _{1}),$ or (D) $f_{4}|_{\partial
\Delta }=f_{3}|_{\partial \Delta }$ and $C(f_{4})\subset
f_{4}^{-1}(E_{q})\cup \{p_{0}\}.$ Proposition \ref{bd-in,P} holds in Case
(C), and throughout we only concern Case (D). When $v_{f_{4}}(p_{0})=1,$ we
have $C(f_{4})\subset f_{4}^{-1}(E_{q}),$ and $\Sigma _{4}$ is the desired
surface in $\mathbf{F}.$ Thus, we may assume $d=v_{f_{4}}(p_{0})\geq 2$.

\ \ By Lemma \ref{finite components}, $f_{4}(p_{0})\in \Gamma ^{\circ
}\subset \overline{U}.$ Let $f_{4}(p_{0})\overset{\beta }{\rightarrow }a_{1}$
be a simple path in $\overline{U},$ such that $\beta ^{\circ }\cap (\partial
\Sigma _{4}\cup E_{q})=\emptyset .$ Then, $\beta $ is on the left hand side
of $\partial \Sigma _{4}$ near $f_{4}(p_{0}).$ By Lemma \ref{lift-3}, $\beta 
$ has exactly $d$ lifts $p_{0}\overset{\alpha _{1}}{\rightarrow }%
p_{1},\cdots ,$ $p_{0}\overset{\alpha _{d}}{\rightarrow }p_{d}$ from $p_{0}$
in $\Delta \cup \{p_{0}\},$ arranged in the clockwise order at $p_{0}.$ $%
\alpha _{1},\cdots ,\alpha _{d}$ are simple, and $\alpha _{1}^{\circ
},\cdots ,\alpha _{d}^{\circ }$ are pairwise disjoint.

\ \ There are only two cases to discuss. Case (1): for some pair $(i,j)$
with $1\leq i<j\leq d,$ $p_{i}=p_{j}.$ Case (2): $p_{1},\cdots ,p_{d}$ are
distinct. Case (1) is almost the same as Case (1) in Proposition \ref{in to
bd}, and the discussion is omitted. In Case (1), $\Sigma _{4}$ splits into $%
\Sigma _{2}\in \mathbf{F}$ and a closed surface, such that $\Sigma _{2}$ is
better than $\Sigma _{4}$ (and than $\Sigma _{1}$), with $sum(\Sigma
_{2})<sum(\Sigma _{4})\leq sum(\Sigma _{1}).$ Proposition \ref{bd-in,P}
holds in Case (1), and throughout we only concern Case (2). By Notation \ref%
{interval}, $[0,p_{0}]$ in the following figure for Case (2) means the
oriented line segment in $%
\mathbb{C}
$ from $0$ to $p_{0},$ even if $p_{0}\notin 
\mathbb{R}
.$\FRAME{dtbpF}{4.913in}{1.6527in}{0pt}{}{}{bd-in,case2.emf}{\special%
{language "Scientific Word";type "GRAPHIC";display "USEDEF";valid_file
"F";width 4.913in;height 1.6527in;depth 0pt;original-width
13.3354in;original-height 7.5014in;cropleft "0.0323";croptop "1";cropright
"1";cropbottom "0.3824";filename 'bd-in,case2.emf';file-properties "XNPEU";}}

\ \ In Case (2), by applying Lemma \ref{cut} $d$ times, $\overline{\Delta }$
could be cut along $\alpha _{1},\cdots ,\alpha _{d},$ and $\alpha
_{1},\cdots ,\alpha _{d}$ split into $2d$ sequential subarcs $b_{0}\overset{%
\gamma _{1}}{\rightarrow }b_{1},\cdots ,$ $b_{2d-1}\overset{\gamma _{2d}}{%
\rightarrow }b_{2d}$ of $\partial \Delta .$ We obtain $\Sigma
_{5}=(f_{4}\circ \varphi ,\overline{\Delta })\in \mathbf{F}.$ Here, $\varphi
\in OPL(\overline{\Delta })$ maps $\overline{\Delta }\backslash (\gamma
_{1}\cup \cdots \cup \gamma _{2d})$ homeomorphically onto $\overline{\Delta }%
\backslash (\alpha _{1}\cup \cdots \cup \alpha _{d}),$ and $(\varphi ,\gamma
_{1})=\alpha _{1}=(\varphi ,-\gamma _{2}),\cdots ,$ and $(\varphi ,\gamma
_{2d-1})=\alpha _{d}=(\varphi ,-\gamma _{2d})$. By Lemma \ref{glue}, the
following pairs of adjacent subarcs $\{\gamma _{2},\gamma _{3}\},\cdots ,$ $%
\{\gamma _{2d-2},\gamma _{2d-1}\}$ of $\partial \Delta $ could be sewn
together, and a surface $\Sigma _{5}^{\prime }\in \mathbf{F}$ is
constructed. Then the corresponding arcs of $\gamma _{1}$ and $\gamma _{2d}$
become adjacent in $\Sigma _{5}^{\prime }$, which could be sewn together
again, resulting in $\Sigma _{2}=(f_{2},\overline{\Delta })=(f_{4}\circ
\varphi \circ \psi ^{-1},\overline{\Delta })\in \mathbf{F}$.

\ Here $\psi \in OPL(\overline{\Delta })$ maps $\overline{\Delta }\backslash
(\gamma _{1}\cup \cdots \cup \gamma _{2d})$ homeomorphically onto $\overline{%
\Delta }\backslash ([0,p_{0}]\cup \Gamma _{1}\cup \cdots \cup \Gamma _{d-1}),
$ where $\Gamma _{1},\cdots ,\Gamma _{d-1}$ are $(d-1)$ simple arcs from $0$
in $\Delta .$ Moreover, $[0,p_{0}]\backslash \{0\},$ $\Gamma _{1}\backslash
\{0\},\cdots ,$ $\Gamma _{d-1}\backslash \{0\}$ are pairwise disjoint, and $%
(\psi ,-\gamma _{1})=[0,p_{0}]=(\psi ,\gamma _{2d}),$ $(\psi ,\gamma
_{2})=\Gamma _{1}=(\psi ,-\gamma _{3}),$ $\cdots ,$ $(\psi ,\gamma
_{2d-2})=\Gamma _{d-1}=(\psi ,-\gamma _{2d-1}).$ By compositing a
self-homeomorphism of $\overline{\Delta }$ to $\psi ,$ we may also assume $%
\psi |_{\partial \Delta \backslash (\gamma _{1}\cup \cdots \cup \gamma
_{2d})}=\varphi |_{\partial \Delta \backslash (\gamma _{1}\cup \cdots \cup
\gamma _{2d})}.$ Then $f_{2}|_{\partial \Delta }=f_{4}|_{\partial \Delta },$
and $L(\partial \Sigma _{2})=L(\partial \Sigma _{4}).$ The figures for Case
(2) show how to construct $\Sigma _{2}$ from $\Sigma _{1}.$ To be more
intuitive, $d=v_{f_{4}}(p_{0})$ is chosen as $3$, and the domain $\overline{%
\Delta }$ of $\Sigma _{5}$ is drawn as two homeomorphic shapes in (b) and
(c). We claim $\Sigma _{2}$ is desired.

\ \ By Lemma \ref{cut} and Lemma \ref{glue}, we have $A(\Sigma
_{2})=A(\Sigma _{5})=A(\Sigma _{4}),$ and for each $w\in S\backslash \beta ,$
we have $\overline{n}(\Sigma _{2},w)=\overline{n}(\Sigma _{5},w)=\overline{n}%
(\Sigma _{4},w).$ Especially, this equality holds for each $a_{j}\in
E_{q}\backslash \{a_{1}\},$ since $a_{j}\notin \beta .$ Then for each
component $W$ of $S\backslash \partial \Sigma _{2}=S\backslash \partial
\Sigma _{4},$ we have $n(\Sigma _{2},W)=n(\Sigma _{4},W),$ and then $%
sum(\Sigma _{2})=sum(\Sigma _{4}).$

\ \ As for $a_{1}\in E_{q},$ $\psi \circ \varphi ^{-1}$ is a bijection\ from 
$f_{4}^{-1}(a_{1})\backslash (\partial \Delta \cup \alpha _{1}\cup \cdots
\cup \alpha _{d})$ onto $f_{2}^{-1}(a_{1})\backslash (\partial \Delta \cup
\Gamma _{1}\cup \cdots \cup \Gamma _{d}).$ Then%
\begin{eqnarray*}
\overline{n}(f_{2},a_{1}) &=&\#f_{2}^{-1}(a_{1})\backslash (\partial \Delta
\cup \Gamma _{1}\cup \cdots \cup \Gamma _{d})+\#f_{2}^{-1}(a_{1})\cap \Delta
\cap (\Gamma _{1}\cup \cdots \cup \Gamma _{d}) \\
&=&\#f_{4}^{-1}(a_{1})\backslash (\partial \Delta \cup \alpha _{1}\cup
\cdots \cup \alpha _{d})+\#\{0\} \\
&=&\#f_{4}^{-1}(a_{1})\cap \Delta -\#f_{4}^{-1}(a_{1})\cap \Delta \cap
(\alpha _{1}\cup \cdots \cup \alpha _{d})+1 \\
&=&\overline{n}(f_{4},a_{1})-\#\{p_{1},\cdots ,p_{d}\}+1=\overline{n}%
(f_{4},a_{1})-d+1.
\end{eqnarray*}%
Thus, $\overline{n}(f_{2},E_{q})<\overline{n}(f_{4},E_{q}),$ $H(\Sigma
_{2})>H(\Sigma _{4}),$ and hence $\Sigma _{2}$ is better than $\Sigma _{4}$
(and than $\Sigma _{1}$).

\ \ Because for $j=1,2,\cdots ,d-1,$ $f_{4}$ maps the angle at $p_{0}$
between $\alpha _{j}$ and $\alpha _{j+1}$ to a perigon at $f_{4}(p_{0})$, $%
f_{2}$ maps the perigons at $\psi (b_{2}),\cdots ,$ $\psi (b_{2d-2})$ to
perigons at $f_{4}(p_{0})$. In addition, for each $w\in S\backslash \beta $
near $f_{4}(p_{0}),$ there are $(d-1)$ preimages $f_{4}^{-1}(w)$ near $p_{0}$
between $\alpha _{1}$ and $\alpha _{d},$ and at most one other preimage $%
f_{4}^{-1}(w)$ near $p_{0}$ either between $\partial \Delta $ and $\alpha
_{1},$ or between $\alpha _{d}$ and $\partial \Delta .$ Then, there is at
most one preimage $f_{2}^{-1}(w)$ near $p_{0}.$ Hence, $p_{0},$ $\psi
(b_{2}),\cdots ,$ $\psi (b_{2d-2})$ are $d$ regular points of $\Sigma _{2}.$
By Lemma \ref{cut} and Lemma \ref{glue}, $\psi \circ \varphi ^{-1}$ maps 
\begin{equation*}
C(\Sigma _{4})\backslash (\alpha _{1}\cup \cdots \cup \alpha _{d})=C(\Sigma
_{4})\backslash \{p_{0},p_{1},\cdots ,p_{d}\}
\end{equation*}%
bijectively onto 
\begin{equation*}
C(\Sigma _{2})\backslash (\Gamma _{1}\cup \cdots \cup \Gamma _{d})=C(\Sigma
_{2})\backslash \{0\}.
\end{equation*}%
Recall $C(f_{4})\subset f_{4}^{-1}(E_{q})\cup \{p_{0}\},$ and $0\in
f_{2}^{-1}(a_{1})$ is a special branch point of $\Sigma _{2}.$ For each $%
z\in C(\Sigma _{2})\backslash \{0\},$ 
\begin{equation*}
\varphi \circ \psi ^{-1}(z)\in C(\Sigma _{4})\backslash \{p_{0},p_{1},\cdots
,p_{d}\}\subset f_{4}^{-1}(E_{q}),
\end{equation*}%
and then $f_{2}(z)=f_{4}\circ \varphi \circ \psi ^{-1}(z)\in E_{q}.$
Therefore, we have $C(\Sigma _{2})\subset f_{2}^{-1}(E_{q}),$ and $\Sigma
_{2}$ is the desired surface in $\mathbf{F}$ indeed.
\end{proof}

When Proposition \ref{bd-in,P} couldn't be applied to any $a_{j}\in E_{q}$
and any admissible subarc $\Gamma $ of $\partial \Sigma _{1},$ the method to
rotate the surfaces is useful to solve the problem.

\begin{proposition}
\label{rotation} Let $\Sigma _{1}=(f_{1},\overline{\Delta })\in \mathbf{F},$
such that $\partial \Sigma _{1}\cap E_{q}=\emptyset .$ In addition, for each
admissible subarc $\Gamma $ of $\partial \Sigma _{1},$ the component $%
U_{\Gamma }$ of $S\backslash \partial \Sigma _{1}$ on the left hand side of $%
\Gamma $ is always disjoint from $E_{q}.$ Then there exists $\Sigma _{2}\in 
\mathbf{F},$ such that $\Sigma _{2}$ is better than $\Sigma _{1},$ $\partial
\Sigma _{2}\cap E_{q}\neq \emptyset ,$ and $\partial \Sigma _{2}=\varphi
(\partial \Sigma _{1})$ up to a reparametrization, where $\varphi $ is a
rotation of $S.$\ 
\end{proposition}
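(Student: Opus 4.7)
The plan is to take $\Sigma_{2}=(\varphi \circ f_{1},\overline{\Delta })$ for a suitable rotation $\varphi $ of $S$. This choice makes condition~(3) of Definition~\ref{better} trivial and renders $L(\partial \Sigma _{2})=L(\partial \Sigma _{1})$, $A(\Sigma _{2})=A(\Sigma _{1})$, and $sum(\Sigma _{2})=sum(\Sigma _{1})$ automatic. It then suffices to produce $\varphi $ with (i) $\varphi ^{-1}(a_{k})\in \partial \Sigma _{1}$ for some $k$, and (ii) $\overline{n}(\Sigma _{1},\varphi ^{-1}(a_{j}))\leq \overline{n}(\Sigma _{1},a_{j})$ for every $j$.

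The key reduction is that $\overline{n}(\Sigma _{1},\cdot )$ vanishes on $\overline{V_{j}}$, where $V_{j}$ is the component of $S\setminus \partial \Sigma _{1}$ containing $a_{j}$. The hypothesis forces $V_{j}$ to be \emph{right-only}, i.e.\ $V_{j}\neq U_{\Gamma }$ for every admissible $\Gamma $. An orientation-preservation argument supplies the contrapositive: if $V\subset f_{1}(\Delta )$ (equivalently $n(\Sigma _{1},V)\geq 1$), pick $z_{0}\in f_{1}^{-1}(V)\cap \Delta $ and a path in $\overline{\Delta }$ from $z_{0}$ to $\partial \Delta $ whose image under $f_{1}$ exits $V$ for the first time at a regular boundary point $z_{\infty }\in \partial \Delta $ (non-folded, non-branch, with $f_{1}(z_{\infty })=p_{0}$ a non-branch value on some admissible $\Gamma \subset \partial V$); by Lemma~\ref{cov-1}(iii) with $d=1$, $f_{1}$ is a local homeomorphism at $z_{\infty }$, and orientation preservation sends the $\Delta $-side at $z_{\infty }$ to the left of $\Gamma $. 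Since $f_{1}$ of the path approaches $p_{0}$ from within $V$ along this $\Delta $-side, $V=U_{\Gamma }$, contradicting right-only. Therefore $n(\Sigma _{1},V_{j})=0$, so $\overline{n}(\Sigma _{1},a_{j})=0$; lower semicontinuity of the integer-valued function $w\mapsto \overline{n}(\Sigma _{1},w)$ (preimages in $\Delta $ can only coalesce or leak to $\partial \Delta $) then extends the vanishing from $V_{j}$ to $\overline{V_{j}}$.

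Given this vanishing, I produce $\varphi $ by a closed-set argument in $SO(3)$. Let $\mathcal{R}=\{\varphi \in SO(3):\varphi ^{-1}(a_{j})\in \overline{V_{j}}\text{ for every }j\}$; this is closed, contains the identity, and is a proper subset of $SO(3)$ since each $\overline{V_{j}}\subsetneq S$. Hence $\partial \mathcal{R}\neq \emptyset $. For any $\varphi \in \partial \mathcal{R}$, closedness gives $\varphi \in \mathcal{R}$; extracting a sequence $\varphi _{n}\to \varphi $ with $\varphi _{n}\notin \mathcal{R}$ and passing to a subsequence yields an index $k$ with $\varphi ^{-1}(a_{k})\in \partial V_{k}\subset \partial \Sigma _{1}$. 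Then $\Sigma _{2}=(\varphi \circ f_{1},\overline{\Delta })\in \mathbf{F}$ satisfies $a_{k}\in \partial \Sigma _{2}\cap E_{q}$, and for every $j$, $\overline{n}(\Sigma _{2},a_{j})=\overline{n}(\Sigma _{1},\varphi ^{-1}(a_{j}))=0\leq \overline{n}(\Sigma _{1},a_{j})$, so $\Sigma _{2}$ is better than $\Sigma _{1}$. The main obstacle lies in the second paragraph: selecting a path whose image exits $V$ at $\partial \Delta $ rather than at an interior preimage of $\partial \Sigma _{1}$, and thereby ruling out right-only components that are nevertheless covered by a ``floating'' preimage component of $f_{1}^{-1}(V)$ inside $\Delta $, requires careful exploitation of the local models in Lemma~\ref{cov-1}; once this reduction is in place, the $SO(3)$-topology step is routine.
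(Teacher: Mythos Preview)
Your key reduction---that a right-only component $V_{j}$ satisfies $n(\Sigma _{1},V_{j})=0$---is false; the ``floating preimage'' obstacle you identify in your final sentence is fatal, not merely technical. Here is a counterexample. Take $V=\{|w|<1\}$, $V'=S\setminus \overline V$, and build $f_{1}$ by gluing a degree-$2$ branched cover $\overline W\to \overline V$ (with $\overline W\subset \Delta$ a closed disk, one interior branch value) to a degree-$3$ branched cover $\overline{\Delta }\setminus W\to \overline{V'}$ whose boundary degrees are $2$ on $\partial W$ and $1$ on $\partial \Delta$; Riemann--Hurwitz ($0=3\cdot 1-B$) shows the latter exists. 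Then $f_{1}\in OPL(\overline{\Delta })$, $\partial \Sigma _{1}$ is the circle $\partial V$ traversed once clockwise (so every admissible subarc has $V'$ on the left), and $n(\Sigma _{1},V)=2$. Place all $a_{j}$ in $V$ with $a_{1}$ at the unique branch value in $V$: then the hypotheses hold, $\overline{n}(\Sigma _{1},a_{1})=1$, and $\overline{n}(\Sigma _{1},w)=2$ for every other $w\in \overline V$. Your surface $(\varphi \circ f_{1},\overline{\Delta })$ therefore violates condition~(2) of Definition~\ref{better} unless $\varphi ^{-1}(a_{1})$ lands on that single point, a constraint your $\partial \mathcal{R}$ argument cannot enforce.

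The paper does not take $\Sigma _{2}$ to be a pure rotation. After a first-hitting-time choice of rotation $\varphi _{1}$ so that exactly one $a_{k}$ meets $\varphi _{1}(\partial \Sigma _{1})$ on an admissible arc $\varphi _{1}(\Gamma _{1})$, it post-composes with a self-homeomorphism $\psi $ of $S$ satisfying $\psi \circ \varphi _{1}|_{E_{q}}=\mathrm{Id}$, which forces $\overline{n}(\psi \circ \varphi _{1}\circ f_{1},a_{j})=\overline{n}(f_{1},a_{j})$ for all $j$ but distorts the boundary near $\varphi _{1}(\Gamma _{1})$ and loses some area. It then reattaches $m^{+}(\partial \Sigma _{1},\Gamma _{1})$ small Jordan pieces so as to restore $\partial \Sigma _{2}=\varphi _{1}(\partial \Sigma _{1})$ and $A(\Sigma _{2})=A(\Sigma _{1})$, with the attached pieces meeting $E_{q}$ only on their boundaries so that $\overline{n}$ is unchanged.
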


\begin{proof}
\ \ For each $a_{j}\in E_{q},$ $U_{j}$ denotes the component of $S\backslash
\partial \Sigma _{1}$ containing $a_{j}$ ($U_{1},\cdots ,U_{q}$ may not be
distinct). Then for each $j=1,\cdots ,q$ and each admissible subarc $\Gamma
\subset \partial U_{j}$ of $\partial \Sigma _{1},$ $U_{j}$ is always on the
right hand side of $\Gamma .$

\ \ There is some $a_{j}\in E_{q},$ say $a_{1}\in E_{q},$ and a continuous
family of rotations $\varphi _{t}$ of $S,$ such that (1) $\varphi _{0}=Id;$
(2) for each $t\in \lbrack 0,1),$ $\varphi _{t}(\partial \Sigma _{1})\cap
E_{q}=\emptyset ;$ (3) $\varphi _{1}(\partial \Sigma _{1})\cap
E_{q}=\{a_{1}\};$ and (4) $a_{1}\in \varphi _{1}(\Gamma _{1}^{\circ }),$
where $b_{1}\overset{\Gamma _{1}}{\rightarrow }b_{2}$ is an admissible
subarc of $\partial \Sigma _{1}.$ Conditions (1) and (2) could be achieved
by rotating $\partial \Sigma _{1}$ continuously, until $\varphi
_{t}(\partial \Sigma _{1})\cap E_{q}\neq \emptyset $ for the first time.
Conditions (3) and (4) could be achieved by a suitable perturbation. By a
refinement, we may assume $\Gamma _{1}$ is so short that $\varphi
_{1}(\Gamma _{1})\cap E_{q}=\{a_{1}\}.$

\ \ We claim that $\Gamma _{1}\subset \partial U_{1},$ and for $j=2,\cdots
,q,$ $\varphi _{1}^{-1}(a_{j})\in U_{j}.$ In fact, $\beta _{j}(t)|_{0\leq
t\leq 1}=\varphi _{t}^{-1}(a_{j})$ is a path from $a_{j}$ to $\varphi
_{1}^{-1}(a_{j}).$ For each $t_{0}\in \lbrack 0,1),$ $\varphi
_{t_{0}}(\partial \Sigma _{1})\cap E_{q}=\emptyset ,$ and then $\varphi
_{t_{0}}^{-1}(a_{j})\notin \partial \Sigma _{1}$. Thus, $\beta
_{j}(t)|_{0\leq t<1}$ is disjoint from $\partial \Sigma _{1},$ and so $\beta
_{j}(t)|_{0\leq t\leq 1}$ is contained in $\overline{U_{j}}.$ Hence, $%
\varphi _{1}^{-1}(a_{1})=\beta _{1}(1)\in \Gamma _{1}^{\circ }\cap \overline{%
U_{1}},$ and by Lemma \ref{finite components}, we have $\Gamma _{1}\subset
\partial U_{1}.$ Similarly, for $j=2,\cdots ,q,$ we have $\varphi
_{1}^{-1}(a_{j})=\beta _{j}(1)\in \overline{U_{j}}\backslash \partial \Sigma
_{1}=U_{j}.$ In other words, for $j=2,\cdots ,q,$ $a_{j}$ and $\varphi
_{1}(a_{j})$ are in the same component $V_{j}=\varphi _{1}(U_{j})$ of $%
S\backslash \varphi _{1}(\partial \Sigma _{1}).$

\ \ We have $m^{-}(\partial \Sigma _{1},\Gamma _{1})=0.$ Otherwise, $-\Gamma
_{1}$ is an admissible subarc of $\partial \Sigma _{1},$ such that $U_{1}\ni
a_{1}$ is on the left hand side of $-\Gamma _{1},$ contradiction. Let $%
U^{\prime }$ be the component of $S\backslash \partial \Sigma _{1}$ on the
left hand side of $\Gamma _{1}.$ Because 
\begin{equation*}
n(\Sigma _{1},U^{\prime })-n(\Sigma _{1},U_{1})=m^{+}(\partial \Sigma
_{1},\Gamma _{1})-m^{-}(\partial \Sigma _{1},\Gamma _{1})>0,
\end{equation*}%
we have $U^{\prime }\neq U_{1}$. By topology, there is a simple piecewise
analytic path $b_{1}\overset{\Gamma _{2}}{\rightarrow }b_{2}$ in $\overline{%
U^{\prime }}$ with $\Gamma _{2}^{\circ }\subset U^{\prime },$ sufficiently
close to $\Gamma _{1},$ such that the small Jordan domain $D_{12}$ enclosed
by $\Gamma _{1}+(-\Gamma _{2})$ satisfies the following three conditions.
(i) $\varphi _{1}(\overline{D_{12}})\cap E_{q}=\{a_{1}\}$. (ii) $\overline{%
U_{1}}$ is homeomorphic to $\overline{U_{1}\cup D_{12}}.$ (iii) $\overline{%
U^{\prime }}$ is homeomorphic to $\overline{U^{\prime }\backslash D_{12}}.$

\ \ For each component $V$ of $S\backslash \varphi _{1}(\partial \Sigma
_{1}) $ other than $\varphi _{1}(U_{1})$ and $\varphi _{1}(U^{\prime }),$
there exists $\psi _{V}\in Homeo^{+}(\overline{V},\overline{V}),$ such that $%
\psi _{V}|_{\partial V}=Id,$ and for each $a_{j}\in E_{q}\cap \varphi
_{1}^{-1}(V),$ $\psi _{V}(\varphi _{1}(a_{j}))=a_{j}.$ For $V_{1}\overset{def%
}{=}\varphi _{1}(U_{1}),$ there exists $\psi _{V_{1}}\in Homeo^{+}(\overline{%
V_{1}},\varphi _{1}(\overline{U_{1}\cup D_{12}})),$ such that $\psi
_{V_{1}}|_{\partial V_{1}\backslash \varphi _{1}(\Gamma _{1})}=Id,$ $\psi
_{V_{1}}(\varphi _{1}(\Gamma _{1}))=\varphi _{1}(\Gamma _{2}),$ and for each 
$a_{j}\in E_{q}\cap U_{1},$ $\psi _{V_{1}}(\varphi _{1}(a_{j}))=a_{j}.$ For $%
V^{\prime }\overset{def}{=}\varphi _{1}(U^{\prime }),$ there exists $\psi
_{V^{\prime }}\in Homeo^{+}(\overline{V^{\prime }},\varphi _{1}(\overline{%
U^{\prime }\backslash D_{12}})),$ such that $\psi _{V^{\prime }}|_{\partial
V^{\prime }\backslash \varphi _{1}(\Gamma _{1})}=Id,$ $\psi _{V^{\prime
}}|_{\varphi _{1}(\Gamma _{1})}=\psi _{V_{1}}|_{\varphi _{1}(\Gamma _{1})},$
and for each $a_{j}\in E_{q}\cap U^{\prime },$ $\psi _{V^{\prime }}(\varphi
_{1}(a_{j}))=a_{j}.$ We define $\psi (z)$ as 
\begin{equation*}
\psi (z)=\psi _{V}(z),\text{ for }z\in \overline{V},\text{ where }V\text{ is
a component of }S\backslash \varphi _{1}(\partial \Sigma _{1}).
\end{equation*}%
Then $\psi (z)\in Homeo^{+}(S,S)$ is well-defined, independent of the
choices of $V$ when $z\in \varphi _{1}(\partial \Sigma _{1}).$ By
definition, we have $\psi \circ \varphi _{1}|_{E_{q}}=Id,$ and $\psi \circ
\varphi _{1}|_{\partial \Sigma _{1}\backslash \Gamma _{1}}=Id.$

\ \ The surface $\Sigma _{3}=(f_{3},\overline{\Delta })=(\psi \circ \varphi
_{1}\circ f_{1},\overline{\Delta })\in \mathbf{F}$ satisfies for each $%
a_{j}\in E_{q},$ $\overline{n}(\Sigma _{3},a_{j})=\overline{n}(\Sigma
_{1},a_{j}).$ In addition, we have 
\begin{eqnarray*}
n(\Sigma _{3},\varphi _{1}(U^{\prime }\backslash \overline{D_{12}}))
&=&n(\Sigma _{3},\psi \circ \varphi _{1}(U^{\prime }))=n(\Sigma
_{1},U^{\prime }), \\
n(\Sigma _{3},\varphi _{1}(U_{1}\cup \Gamma _{1}^{\circ }\cup D_{12}))
&=&n(\Sigma _{3},\psi \circ \varphi _{1}(U_{1}))=n(\Sigma _{1},U_{1}),
\end{eqnarray*}%
and for each component $U$ of $S\backslash \partial \Sigma _{1}$ other than $%
U_{1}$ and $U^{\prime },$ $n(\Sigma _{3},\psi \circ \varphi
_{1}(U))=n(\Sigma _{1},U).$ Let $m$ be $m^{+}(\partial \Sigma _{3},\varphi
_{1}(\Gamma _{2}))=m^{+}(\partial \Sigma _{1},\Gamma _{1}).$ Then we have 
\begin{eqnarray*}
A(\Sigma _{3}) &=&A(\Sigma _{1})-mA(D_{12}), \\
L(\partial \Sigma _{3}) &=&L(\partial \Sigma _{1})-mL(\Gamma _{1})+mL(\Gamma
_{2}).
\end{eqnarray*}

\ \ There is a partition 
\begin{equation*}
\partial \Delta =\gamma _{1}+\gamma _{1}^{\prime }+\gamma _{2}+\gamma
_{2}^{\prime }+\cdots +\gamma _{m}+\gamma _{m}^{\prime },
\end{equation*}%
such that $f_{3}$ maps each $\gamma _{j}$ homeomorphically onto $\varphi
_{1}(\Gamma _{2}).$ Let $\overline{W_{1}},\cdots ,\overline{W_{m}}$ be $m$
pairwise disjoint closed Jordan domains in $%
\mathbb{C}
\backslash \overline{\Delta },$ such that for each $\overline{W_{j}},$ $%
\partial W_{j}\cap \partial \Delta =\gamma _{j}.$ Then $\overline{W}\overset{%
def}{=}\overline{\Delta }\cup \overline{W_{1}}\cup \cdots \cup \overline{%
W_{m}}$ is a closed Jordan domain. Recall that $\varphi _{1}(D_{12})$ is on
the right hand side of $\varphi _{1}(\Gamma _{2}).$ Thus, there exists $%
g_{j}\in Homeo^{+}(\overline{W_{j}},\varphi _{1}(\overline{D_{12}})),$ such
that $g_{j}|_{\gamma _{j}}=f_{3}|_{\gamma _{j}}.$ We define $f_{2}\in OPL(%
\overline{W})$ as 
\begin{equation*}
f_{2}(z)=\left\{ 
\begin{array}{c}
f_{3}(z)\text{ for }z\in \overline{\Delta }, \\ 
g_{j}(z)\text{ for }z\in \overline{W_{j}}.%
\end{array}%
\right.
\end{equation*}%
Then we claim $\Sigma _{2}=(f_{2},\overline{W})\in \mathbf{F}$ is the
desired surface.

\ \ In fact, $m$ coincident subarcs $\varphi _{1}(\Gamma _{2})$ of $\partial
\Sigma _{3}$ are replaced by $m$ coincident subarcs $\varphi _{1}(\Gamma
_{1})$ of $\partial \Sigma _{2}$. up to a reparametrization, 
\begin{eqnarray*}
\partial \Sigma _{2} &=&(f_{2},(\partial W_{1}\backslash \gamma _{1})+\gamma
_{1}^{\prime }+(\partial W_{2}\backslash \gamma _{2})+\gamma _{2}^{\prime
}+\cdots +(\partial W_{m}\backslash \gamma _{m})+\gamma _{m}^{\prime }) \\
&=&(g_{1},\partial W_{1}\backslash \gamma _{1})+(f_{3},\gamma _{1}^{\prime
})+\cdots +(g_{m},\partial W_{m}\backslash \gamma _{m})+(f_{3},\gamma
_{m}^{\prime }) \\
&=&\varphi _{1}(\Gamma _{1})+(f_{3},\gamma _{1}^{\prime })+\cdots +\varphi
_{1}(\Gamma _{1})+(f_{3},\gamma _{m}^{\prime })=\varphi _{1}(\partial \Sigma
_{1}),
\end{eqnarray*}%
and then $L(\partial \Sigma _{2})=L(\partial \Sigma _{1}).$ Furthermore, for
each component $U$ of $S\backslash \partial \Sigma _{1},$ $\varphi _{1}(U)$
is a component of $S\backslash \partial \Sigma _{2},$ and $n(\Sigma
_{2},\varphi _{1}(U))=n(\Sigma _{1},U),$ which implies $sum(\Sigma
_{2})=sum(\Sigma _{1}).$ We also have 
\begin{eqnarray*}
A(\Sigma _{2}) &=&A(f_{3},\overline{\Delta })+A(g_{1},W_{1})+\cdots
+A(g_{m},W_{m}) \\
&=&A(\Sigma _{3})+mA(D_{12})=A(\Sigma _{1}).
\end{eqnarray*}

\ \ For each $i=1,\cdots ,m,$ we have 
\begin{eqnarray*}
f_{2}^{-1}(E_{q})\cap \overline{W_{i}} &=&f_{2}^{-1}(\{a_{1}\})\cap 
\overline{W_{i}} \\
&=&g_{i}^{-1}(\{a_{1}\})\subset \partial W_{i}\backslash \gamma _{i}\subset
\partial W.
\end{eqnarray*}%
Then $f_{2}^{-1}(E_{q})\cap W=f_{3}^{-1}(E_{q})\cap \Delta ,$ which implies%
\begin{equation*}
\overline{n}(\Sigma _{2},E_{q})=\overline{n}(\Sigma _{3},E_{q})=\overline{n}%
(\Sigma _{1},E_{q}).
\end{equation*}%
In conclusion, $H(\Sigma _{2})=H(\Sigma _{1}),$ and then $\Sigma _{2}$ is
better than $\Sigma _{1},$ and $\partial \Sigma _{2}\cap E_{q}=\{a_{1}\}\neq
\emptyset .$
\end{proof}

\subsection{The main theorem}

In this subsection, the following main theorem is proved, which is slightly
stronger than the version in Section 1. The difference between two versions
is discussed in two remarks after this theorem.

\begin{theorem}
\label{branch-special} For each $\Sigma _{1}=(f_{1},\overline{U_{1}})\in 
\mathbf{F}$ with $H(\Sigma _{1})\geq 0,$ there exists $\Sigma _{0}\in 
\mathbf{F},$ such that $\Sigma _{0}$ is better than $\Sigma _{1}$, $%
CV(\Sigma _{0})\subset E_{q},$ and $\Sigma _{0}$ has no non-special folded
points.
\end{theorem}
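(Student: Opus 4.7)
The plan is to proceed by strong induction on $sum(\Sigma_{1})\in\mathbb{N}_{+}$, with a preliminary application of Proposition \ref{no-folded} to reduce to the case where $\Sigma_{1}$ already has no non-special folded points (legal because $H(\Sigma_{1})\geq 0$). The base case $sum=1$ is trivial: the unique component of $S\setminus\partial\Sigma_{1}$ has covering number one, so $f_{1}$ is a local homeomorphism, $C(\Sigma_{1})=\emptyset$, and $\Sigma_{0}=\Sigma_{1}$ satisfies the conclusion. For the inductive step, split on whether $\partial\Sigma_{1}$ meets $E_{q}$.

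If $\partial\Sigma_{1}\cap E_{q}\neq\emptyset$, invoke the first clause of Corollary \ref{bd-bd,C}. This produces $\Sigma_{2}$ better than $\Sigma_{1}$ satisfying either $sum(\Sigma_{2})<sum(\Sigma_{1})$, or else $f_{2}|_{\partial\Delta}=f_{1}|_{\partial\Delta}$ together with $CV(\Sigma_{2})\subset E_{q}$. In the first alternative, reapply Proposition \ref{no-folded} to $\Sigma_{2}$ (its hypothesis $H\geq 0$ persists since ``better than'' preserves $H\geq 0$) and then invoke the induction hypothesis. In the second, $f_{2}|_{\partial\Delta}=f_{1}|_{\partial\Delta}$ means the folded-point structure of the boundary is unchanged, so $\Sigma_{2}$ inherits from $\Sigma_{1}$ the absence of non-special folded points; moreover $CV(\Sigma_{2})\subset E_{q}$ now forces every folded point to be special, and $\Sigma_{0}=\Sigma_{2}$ works.

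If $\partial\Sigma_{1}\cap E_{q}=\emptyset$, note that any folded point of $\Sigma_{1}$ would project into $\partial\Sigma_{1}$ and hence be non-special; the inductive setup rules this out, so $\Sigma_{1}$ has no folded points at all, matching hypothesis (a) of Propositions \ref{bd-in,P} and \ref{rotation}. If some admissible subarc $\Gamma\subset\partial\Sigma_{1}$ has an $a_{j}\in E_{q}$ lying in its left component $U_{\Gamma}$, Proposition \ref{bd-in,P} applies: either $sum$ drops (handled as in the previous paragraph) or $\Sigma_{2}$ has $CV(\Sigma_{2})\subset E_{q}$ and $f_{2}|_{\partial\Delta}=f_{1}|_{\partial\Delta}$, closing the argument. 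If no such admissible subarc exists, Proposition \ref{rotation} produces $\Sigma_{2}$ better than $\Sigma_{1}$ via a rotation of $S$ (accommodated by Definition \ref{better}(3)) with $\partial\Sigma_{2}\cap E_{q}\neq\emptyset$; clean up any folded points introduced by the construction using one further application of Proposition \ref{no-folded}, and then invoke the already-settled $\partial\Sigma\cap E_{q}\neq\emptyset$ case. Since the rotation alone does not increase $sum$ and is used at most once before the boundary meets $E_{q}$, no circular invocation is created.

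The main delicate point is that each ``$sum$ strictly drops'' branch (in Corollary \ref{bd-bd,C}, Proposition \ref{bd-in,P}, Proposition \ref{rotation}, or the underlying Corollary \ref{in-bd,C} and Propositions \ref{in to bd}, \ref{bd-bd}) can in principle destroy the no-non-special-folded-points condition, which the inductive hypothesis requires as an input; thus every descent step must be followed by a fresh invocation of Proposition \ref{no-folded} before re-entering the induction. Transitivity of the ``better than'' relation (noted right after Definition \ref{better}) guarantees that these iterated cleanups compose into a single surface better than the original $\Sigma_{1}$, and the rotation clause Definition \ref{better}(3) means the single rotation introduced by Proposition \ref{rotation} does not need to be undone in the final output.
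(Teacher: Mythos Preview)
Your proposal is correct and follows essentially the same induction-on-$sum$ strategy as the paper, with the same case split on whether $\partial\Sigma_{1}\cap E_{q}$ is empty and the same deployment of Corollary~\ref{bd-bd,C}, Proposition~\ref{bd-in,P}, and Proposition~\ref{rotation}. One small point: your last paragraph slightly misstates the logic---the induction hypothesis is the full theorem for smaller $sum$, whose only input is $H\geq 0$, not the absence of non-special folded points; that cleanup via Proposition~\ref{no-folded} happens \emph{inside} each inductive call, so your extra invocations after each $sum$-drop are harmless but redundant.
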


\begin{proof}
\ \ This theorem is proved by induction on $sum(\Sigma _{1}).$ Firstly, when 
$sum(\Sigma _{1})=1,$ it follows that $C(\Sigma _{1})=\emptyset .$ By
Proposition \ref{no-folded}, there exists $\Sigma _{0}\in \mathbf{F}$, such
that $\Sigma _{0}$ is better than $\Sigma _{1},$ and $\Sigma _{0}$ has no
non-special folded points. Since $sum(\Sigma _{0})=1,$ we have $CV(\Sigma
_{0})=\emptyset $, and hence the main theorem holds when $sum(\Sigma
_{1})=1. $

\ \ By induction, we assume the main theorem holds for all $\Sigma \in 
\mathbf{F}$ with $sum(\Sigma )<sum(\Sigma _{1})$ and $H(\Sigma )\geq 0.$ By
Proposition \ref{no-folded}, we may assume $U_{1}=\Delta ,$ and $\Sigma _{1}$
has no non-special folded points. There are two possibilities to discuss,
either $\partial \Sigma _{1}\cap E_{q}=\emptyset ,$ or $\partial \Sigma
_{1}\cap E_{q}\neq \emptyset .$

\ \ Firstly, we assume $\partial \Sigma _{1}\cap E_{q}=\emptyset ,$ and then 
$\Sigma _{1}$ has no folded points. There are two cases. (1) The component $%
U_{j}$ of $S\backslash \partial \Sigma _{1}$ containing some $a_{j}\in
E_{q}, $ is on the left hand side of an admissible subarc $\Gamma $ of $%
\partial \Sigma _{1}.$ (2) For each admissible subarc $\Gamma $ of $\partial
\Sigma _{1}$, the component $U_{\Gamma }$ of $S\backslash \partial \Sigma
_{1}$ on the left hand side of $\Gamma $ contains no special points. In Case
(1), by Proposition \ref{bd-in,P}, there exists $\Sigma _{2}\in \mathbf{F}$,
such that $\Sigma _{2}$ is better than $\Sigma _{1},$ and either (a) $%
sum(\Sigma _{2})<sum(\Sigma _{1}),$ or (b) $CV(\Sigma _{2})\subset E_{q}$
and $f_{2}|_{\partial \Delta }=f_{1}|_{\partial \Delta }$. Situation (a) is
solved by the assumption of induction. In Situation (b), $\Sigma _{2}$ has
no folded points, and then the main theorem also holds.

\ \ In Case (2), by Proposition \ref{rotation}, there exists $\Sigma _{3}\in 
\mathbf{F}$, such that $\Sigma _{3}$ is better than $\Sigma _{1},$ $\partial
\Sigma _{3}\cap E_{q}\neq \emptyset ,$ and $\partial \Sigma _{3}$ is a
rotation of $\partial \Sigma _{1}.$ Case (2) is reduced to the possibility
that $\partial \Sigma _{1}\cap E_{q}\neq \emptyset .$

\ \ Secondly, we assume $\partial \Sigma _{1}\cap E_{q}\neq \emptyset .$ By
Corollary \ref{in-bd,C}, there exists $\Sigma _{4}=(f_{4},\overline{\Delta }%
)\in \mathbf{F}$, such that $\Sigma _{4}$ is better than $\Sigma _{1},$ and
either (c) $sum(\Sigma _{4})<sum(\Sigma _{1}),$ or (d) $f_{4}|_{\partial
\Delta }=f_{1}|_{\partial \Delta }$ and $C(\Sigma _{4})\subset \partial
\Delta \cup f_{4}^{-1}(E_{q}).$ Situation (c) is solved by the assumption of
induction. In Situation (d), $\Sigma _{4}$ has no non-special folded points, 
$H(\Sigma _{4})\geq 0$ and $\partial \Sigma _{4}\cap E_{q}\neq \emptyset .$
By Corollary \ref{bd-bd,C}, there exists $\Sigma _{5}=(f_{5},\overline{%
\Delta })\in \mathbf{F}$, such that $\Sigma _{5}$ is better than $\Sigma
_{4} $ (and than $\Sigma _{1}$), and either (e) $sum(\Sigma _{5})<sum(\Sigma
_{4})\leq sum(\Sigma _{1}),$ or (f) $CV(\Sigma _{5})\subset E_{q}$ and $%
f_{5}|_{\partial \Delta }=f_{4}|_{\partial \Delta }=f_{1}|_{\partial \Delta
}.$ Situation (e) is solved by the assumption of induction. In Situation
(f), $\Sigma _{5}$ has no non-special folded points, which is the desired
surface in $\mathbf{F}$. Now the whole proof is completed.
\end{proof}

\begin{remark}
By the notations above, since $\Sigma _{0}=(f_{0},\overline{\Delta })\in 
\mathbf{F}$ has no non-special folded points, $f_{0}|_{\partial \Delta }$ is
locally injective at each point $z\in \partial \Delta \backslash
f_{0}^{-1}(E_{q}).$ Together with $CV(\Sigma _{0})\subset E_{q},$ $f_{0}$ is
locally injective at each $z\in \overline{\Delta }\backslash
f_{0}^{-1}(E_{q}).$
\end{remark}

\begin{remark}
If we don't require $\partial \Sigma _{0}$ is a closed subarc of $\varphi
(\partial \Sigma _{1})$ ($\varphi $ is a rotation of $S$), then the
condition $H(\Sigma _{1})\geq 0$ in Theorem \ref{branch-special} is
unnecessary. For each $\Sigma _{1}\in \mathbf{F}$ with $H(\Sigma _{1})<0,$
there exists $\Sigma ^{\prime }\in \mathbf{F},$ such that $H(\Sigma ^{\prime
})\geq 0>H(\Sigma _{1}),$ $L(\partial \Sigma ^{\prime })\leq L(\partial
\Sigma _{1}),$ $\overline{n}(\Sigma ^{\prime })=1\leq \overline{n}(\Sigma
_{1}),$ and $\overline{n}(\Sigma ^{\prime },E_{q})=0;$ and then Theorem \ref%
{branch-special} could be applied to $\Sigma ^{\prime }$ instead.
\end{remark}

The requirement that $\partial \Sigma _{0}$ is a closed subarc of $\varphi
(\partial \Sigma _{1}),$ is useful to discuss the following families of
(simply-connected) polygonal surfaces.

\begin{definition}
A simple subarc of a great circle on $S$ is called a \emph{(spherical)} 
\emph{line segment}. A\emph{\ }curve is called \emph{polygonal }if it could
be partitioned into a finite number of line segments. $\Sigma \in \mathbf{F}$
is called \emph{a polygonal surface}, if $\partial \Sigma $ is a closed
polygonal curve. Let $\mathbf{F}_{P}(L,M,N)$ denote the family of all
polygonal surfaces $\Sigma \in \mathbf{F},$ such that $L(\partial \Sigma
)\leq L,$ $\overline{n}(\Sigma ,a_{j})\leq M$ for each $a_{j}\in E_{q},$ and 
$\partial \Sigma $ consists of at most $N$ line segments.
\end{definition}

Each closed subarc of a closed polygonal curve consisting of $N$ line
segments, is also polygonal, consisting of at most $N$ line segments. The
following theorem ensures that in order to study the constant $\sup
\{H(\Sigma )|\Sigma \in \mathbf{F}_{P}(L,M,N)\},$ we only have to consider
surfaces $\Sigma \in \mathbf{F}_{P}(L,M,N)$ such that $CV(\Sigma )\subset
E_{q}.$

\begin{theorem}
For each $L>0,$ $M\geq 0,$ $N\geq 3,$%
\begin{equation*}
\sup \{H(\Sigma )|\Sigma \in \mathbf{F}_{P}(L,M,N)\}=\sup \{H(\Sigma
)|\Sigma \in \mathbf{F}_{P}(L,M,N),\text{ }CV(\Sigma )\subset E_{q}\},
\end{equation*}
\end{theorem}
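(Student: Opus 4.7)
The plan is to deduce this theorem directly from Theorem \ref{branch-special}, whose "better" relation was formulated precisely to make such corollaries immediate. The inequality $\sup\{H(\Sigma):\Sigma\in\mathbf{F}_P(L,M,N),\,CV(\Sigma)\subset E_q\}\le\sup\{H(\Sigma):\Sigma\in\mathbf{F}_P(L,M,N)\}$ is trivial since the left-hand set is a subset of the right. For the reverse inequality, I first reduce to surfaces with $H(\Sigma_1)\ge0$: any sufficiently small spherical triangle disjoint from $E_q$, viewed as the identity surface over its interior, lies in $\mathbf{F}_P(L,M,N)$, has $\overline{n}(\Sigma,E_q)=0$ and hence $R(\Sigma)=(q-2)A(\Sigma)>0$, so $H(\Sigma)>0$; moreover it has $CV(\Sigma)=\emptyset\subset E_q$, so it belongs to both sides. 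Thus both suprema are $\ge 0$, and it suffices to dominate every $\Sigma_1\in\mathbf{F}_P(L,M,N)$ with $H(\Sigma_1)\ge0$ by some surface in $\{\Sigma\in\mathbf{F}_P(L,M,N):CV(\Sigma)\subset E_q\}$.

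Given such $\Sigma_1$, apply Theorem \ref{branch-special} to obtain $\Sigma_0\in\mathbf{F}$ that is better than $\Sigma_1$ with $CV(\Sigma_0)\subset E_q$. By Definition \ref{better}, this gives $H(\Sigma_0)\ge H(\Sigma_1)$, $\overline{n}(\Sigma_0,a)\le\overline{n}(\Sigma_1,a)\le M$ for each $a\in E_q$, and $\partial\Sigma_0$ is a closed subarc (in the sense of Definition \ref{closed subarc}) of $(\varphi,\partial\Sigma_1)$ for some rotation $\varphi$ of $S$. In particular $L(\partial\Sigma_0)\le L(\varphi(\partial\Sigma_1))=L(\partial\Sigma_1)\le L$, since rotations are isometries for the spherical length.

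It remains to check that $\partial\Sigma_0$ is polygonal with at most $N$ line segments. Since rotations of $S$ send great circles to great circles, they preserve line segments, so $\varphi(\partial\Sigma_1)$ is a closed polygonal curve consisting of at most $N$ line segments. The remark immediately preceding the theorem records that any closed subarc of a closed polygonal curve with $N$ line segments is again polygonal with at most $N$ line segments (one inspects Definition \ref{closed subarc}: each constituent arc $\beta_j$ of the closed subarc is, up to reparametrization, one of the subarcs $\gamma_{2j}$ of the original curve, and a subarc of a finite concatenation of line segments is still a finite concatenation of line segments, with total count bounded by the original). Hence $\Sigma_0\in\mathbf{F}_P(L,M,N)$, and $H(\Sigma_0)\ge H(\Sigma_1)$ finishes the proof.

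The only potential difficulty is confirming that the "closed subarc of a rotation" condition built into Definition \ref{better} transports the polygonal structure and the segment count $N$ through Theorem \ref{branch-special}; this is precisely what the cited paragraph ensures, and it is exactly why clause (3) of Definition \ref{better} was phrased using closed subarcs rather than merely requiring $L(\partial\Sigma_0)\le L(\partial\Sigma_1)$. No further work beyond assembling these observations is required.
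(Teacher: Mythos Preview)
Your proof is correct and follows essentially the same approach as the paper: apply Theorem \ref{branch-special} to each $\Sigma_1$ with $H(\Sigma_1)\ge 0$, then use Definition \ref{better} and the closed-subarc remark to verify $\Sigma_0\in\mathbf{F}_P(L,M,N)$. Your handling of the case $H(\Sigma_1)<0$ (observing both suprema are $\ge 0$ via a small triangle) is a minor rephrasing of the paper's direct construction of a replacement surface, but the substance is identical.
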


\begin{proof}
\ \ We claim for $\Sigma _{1}\in \mathbf{F}_{P}(L,M,N),$ there exists $%
\Sigma _{0}\in \mathbf{F}_{P}(L,M,N),$ such that $H(\Sigma _{0})\geq
H(\Sigma _{1})$ and $CV(\Sigma _{0})\subset E_{q}.$ When $H(\Sigma _{1})<0,$
evidently there exists $\Sigma _{0}\in \mathbf{F}_{P}(L,M,N),$ such that $%
\overline{n}(\Sigma _{0},E_{q})=0,$ $L(\partial \Sigma _{0})\leq L(\partial
\Sigma _{1}),$ $CV(\Sigma _{0})=\emptyset ,$ and $H(\Sigma _{0})>0>H(\Sigma
_{1}).$ When $H(\Sigma _{1})\geq 0,$ by Theorem \ref{branch-special}, there
exists $\Sigma _{0}\in \mathbf{F}$, such that $\Sigma _{0}$ is better than $%
\Sigma _{1},$ and $CV(\Sigma _{0})\subset E_{q}.$ By Definition \ref{better}%
, $L(\partial \Sigma _{0})\leq L(\partial \Sigma _{1})\leq L,$ and for each $%
a_{j}\in E_{q},$ 
\begin{equation*}
\overline{n}(\Sigma _{0},a_{j})\leq \overline{n}(\Sigma _{1},a_{j})\leq M.
\end{equation*}%
Since $\partial \Sigma _{0}$ is a closed subarc of $\varphi (\partial \Sigma
_{1})$ ($\varphi $ is a rotation of $S$), $\partial \Sigma _{0}$ is
polygonal, consisting of at most $N$ line segments. Thus, $\Sigma _{0}\in 
\mathbf{F}_{P}(L,M,N)$ is the desired surface, and this theorem follows.
\end{proof}

\end{document}